\DeclareMathOperator{\per}{Per}
\DeclareMathOperator{\ho}{\mathcal{H}}
\DeclareMathOperator{\homeo}{\ho}
\DeclareMathOperator{\shift}{\sigma}
\DeclareMathOperator{\tshift}{\tilde\shift}
\theoremstyle{plain}
\newtheorem{thm}{Theorem}[section]
\newtheorem{cor}[thm]{Corollary}
\newtheorem{lem}[thm]{Lemma}
\newtheorem{prop}[thm]{Proposition}
\newtheorem{que}[thm]{Question}
\theoremstyle{definition}
\newtheorem{rmk}[thm]{Remark}
\newtheorem{ex}[thm]{Example}
\newtheorem{exa}[thm]{Example}
\newtheorem{df}[thm]{Definition}
\theoremstyle{remark}
\DeclareMathOperator{\diam}{diam}
\DeclareMathOperator{\clos}{clos}
\DeclareMathOperator{\dist}{d}
\DeclareMathOperator{\distcc}{\dist_{C^0}}
\DeclareMathOperator{\tdist}{\tilde{d}}
\DeclareMathOperator{\ttdist}{\tilde{\tilde{d}}}
\DeclareMathOperator{\orb}{orb}
\DeclareMathOperator{\con}{con}
\DeclareMathOperator{\shm}{Sh}
\DeclareMathOperator{\pom}{Po}
\newcommand{\R}{\mathbb R}
\newcommand{\Z}{\mathbb Z}
\renewcommand{\epsilon}{\varepsilon}
\newcommand{\expc}{\xi}
\newcommand{\tM}{\tilde{M}}
\newcommand{\nUparrow}{\Uparrow\hspace{-.34cm}\diagup\hspace{.31cm}}
\subjclass[2020]{37B65,37B05}
\keywords{dynamical systems, topological dynamics, shadowing property}
\begin{document}

\title{Shadowing maps}

\author{Alfonso Artigue}
\address{Departamento de Matemática y Estadística del Litoral\\
Centro Universitario Regional Litoral Norte\\
Universidad de la Rep\'ublica\\
Florida 1065, Paysandú, Uruguay.\\}
\email{aartigue@litoralnorte.udelar.edu.uy}

\date{\today}
\begin{abstract}
	This article is about the shadowing property of homeomorphisms on compact metric spaces and the map associating a point of the space to each pseudo-orbit, called 'shadowing map'. Based on some particular dynamical properties, as expansivity, we develop a brief theory and a hierarchy
	of such maps. We consider examples as odometers, shifts on infinite spaces, topologically hyperbolic homeomorphisms and north-shouth dynamics.
	We revisit a well-known technique for proving shadowing of expansive homeomorphisms with canonical coordinates due to R. Bowen, to obtain a  shadowing map with the property we call 'self-tuning' from a hyperbolic bracket. This notion is introduced as part of the hierarchy of shadowing maps studied in this paper.
\end{abstract}
\maketitle


\section{Introduction}
\label{secIntro}
Let $f\colon M\to M$ be a homeomorphism of the compact metric space $(M,\dist)$.
For $\delta>0$ given we say that a sequence $x_i\in M$, $i\in\Z$, is a $\delta$-\textit{pseudo orbit} (or simply a $\delta$-\textit{orbit}) if
$\dist(x_{i+1},f(x_i))\leq\delta$ for all $i\in\Z$.
The homeomorphism is said to have the \textit{shadowing property} (also known as \textit{pseudo-orbit tracing property}) if for all $\epsilon>0$ there is $\delta>0$ such that for each $\delta$-orbit $x$ there is $p\in M$ such that
$\dist(f^i(p),x_i)\leq\epsilon$ for all $i\in\Z$.
We say that such $p$ is an $\epsilon$-\textit{shadow} of the $\delta$-orbit $x$.

The definition of the shadowing property is about a function associating to each $\delta$-orbit a point of the space.
In this article we study some properties of such function, their relations and the dynamical consequencies.
We assume two basic axioms:
\begin{itemize}
	\item continuity and
	\item orbits are shadowed by themselves.
\end{itemize}
For the continuity we consider the $\delta$-orbits as elements of $M^\Z$ endowed with the product topology.
For the second axiom notice that for $\delta=0$ we have a \textit{real orbit}, \textit{i.e.}, $x_i=f^i(x_0)$ for all $i\in\Z$. For such an orbit $x$ we demand the \textit{shadowing point} to be $x_0$. These are quite natural conditions but, alone, they do not imply the shadowing property, see Example \ref{ExProjPOM}. A map satisfying these two axioms will be called \textit{pseudo-orbit map}, see \S\ref{secPOM}.
A pseudo-orbit map inducing the shadowing property will be called \textit{shadowing map}, see \S\ref{secShm}.
In this article we develope a hierarchy of shadowing maps by adding some properties to these elementary axioms.

There are some families of examples with a canonical shadowing map which guided our research.
The strongest family in our set of examples is that of
expansive homeomorphisms with shadowing.
We say that $f$ is \textit{expansive} if there is $\expc>0$ such that if $p\neq q$, $p,q\in M$, then there is $i\in\Z$ such that $\dist(f^i(p),f^i(q))>\expc$.
Expansive homeomorphisms with shadowing are called \textit{topologically hyperbolic}.
This notion is at the top of the hierarchy shown in Table \ref{tablaShm}.

\begin{table}[ht]
	\[
	\begin{array}{c}
		\hbox{\fbox{Topological hyperbolicity}}
		\\
		\Updownarrow \text{Theorem \ref{thmTopHypChar}}
		\\
		\hbox{\fbox{Dynamically-invariant shift-invariant shadowing map}}
		\\
		\Downarrow \text{Obvious}
		\\
		\hbox{\fbox{Shift-invariant shadowing map}}
		\\
		\Downarrow \text{Remark \ref{rmkshInvImpAShInv}}
		\\
		\hbox{\fbox{Self-tuning shadowing map}}
		\\
		\Downarrow \text{Proposition \ref{propAlmSHMimpL-shm}}
		\\
		\hbox{\fbox{L-shadowing map}}
		\\
		\Downarrow\text{Proposition \ref{propLshmImplicaShm}}
		\\
		\hbox{\fbox{Shadowing map}}
		\\
		\Downarrow \text{Proposition \ref{propShmSH}}
		\\
		\hbox{\fbox{Shadowing}}
	\end{array}
	\]
	\caption{Hierarchy of shadowing maps.}
	\label{tablaShm}
\end{table}
From the definitions we already see an important property: if $\epsilon\leq\expc/2$ and $\delta$ is taken from the definition of shadowing then for a $\delta$-orbit there is a unique shadowing point.
The uniqueness implies two properties:
\begin{itemize}
	\item dynamical-invariance and
	\item shift-invariance.
\end{itemize}
To fix notation denote as $\shm_f(x)$ the point that shadows the pseudo-orbit $x$.
On the one hand, we can transform $x$ by applying $f$ to obtain the pseudo-orbit $y$ with $y_i=f(x_i)$ for all $i\in\Z$, which is shadowed by $\shm_f(y)$. But notice that $f(\shm_f(x))$ also shadows $y$. The uniqueness of the shadowing point implies the dynamical-invariance: $\shm_f(f(x))=f(\shm_f(x))$.
On the other hand, $x$ can be transformed by a shift to obtain $z=\shift(x)$ where $z_i=x_{i+1}$ for all $i\in\Z$.
Again, $f(\shm_f(x))$ is a shadow of $\shift(x)$ and the uniqueness implies the shift-invariance:
$\shm_f(\shift(x))=f(\shm_f(x))$.

If we forget expansivity and dynamical-invariance we have shift-invariant shadowing maps. In \S\ref{secShInvShm} we prove several properties for systems with such maps. Also, we show that shift homeomorphisms of compact spaces with infinitely many points have a shift-invariant shadowing map which is not dynamically-invariant.

In \cite{Bowen75} R. Bowen proves the shadowing property for expansive homeomorphisms with canonical coordinates.
This is done by considering the hyperbolic contraction of local stable and unstable sets.
In \cite{Ruelle} D. Ruelle defines Smale spaces in terms of a bracket with  hyperbolic behaviour.
The canonical coordinates induces a bracket as $[p,q]=r$ where $r$ is the point in the local stable set of $q$ and in the local unstable set of $p$, provided that $p$ and $q$ are close enough.
In \S\ref{secAlmShInvShm} we revisit Bowen's technique for hyperbolic brackets. We forget the expansivity condition and look for minimal conditions on the bracket in order to induce a shadowing map. In \S\ref{secAlmShInvShm} we prove that Bowen's technique gives more than a shadowing map,
see Theorem \ref{thmalmshinvshmbracket}.
This extra technical condition will be called \textit{self-tuning}.
It has the following dynamical meaning: if $p\in M$ shadows the pseudo-orbit $x$ and for some $j\in\Z$ and $n\geq 1$ large the distances $\dist(f(x_i),x_{i+1})$ are small for all $|i-j|\leq n$ then the shadowing is more precise, \textit{i.e.} $\dist(f^j(p),x_j)$ is small.
In other words, the shadowing is better where the pseudo-orbit has smaller jumps.
In \S\ref{secNS} we show that Bowen's technique for a north-south homeomorphism gives rise to a self-tuning shadowing map which is not shift-invariant.
Between shadowing maps and self-tuning shadowing maps we place the \textit{L-shadowing maps}; which are shadowing maps inducing the L-shadowing property \cites{CaCo19,GMMT}
(also called \textit{two-sided s-limit shadowing}).
In Table \ref{tablaShm2} we list the counterexamples we found.
\begin{table}[ht]
	\[
	\begin{array}{c}
		\hbox{\fbox{Topological hyperbolicity}}\\
		\nUparrow \text{Remark \ref{rmkShiftShiftInvShmNoExp} (shift maps)}\\
		\hbox{\fbox{Shift-invariant shadowing map}}\\
		\nUparrow\text{Proposition \ref{propNoShInvNS}  (north-south)}\\
		\hbox{\fbox{Self-tuning shadowing map}}\\
		\Uparrow?\\
		\hbox{\fbox{L-shadowing map}}\\
		\nUparrow\text{Proposition \ref{propOdoContraEj} (h-shadowing)}\\
		\hbox{\fbox{Shadowing map}}\\
		\nUparrow\text{Example \ref{exaPANoShm} (pseudo-Anosov of the two-sphere)}\\
		\hbox{\fbox{Shadowing}}
	\end{array}
	\]
	\caption{Summary of counterexamples. We could not solve whether or not every L-shadowing map has self-tuning.}
	\label{tablaShm2}
\end{table}

Besides developing this brief theory of shadowing maps we obtained results concerning weak forms of expansivity:
\begin{itemize}
	\item In Theorem \ref{thmShmCwExpImpExp} we show that
	if $M$ is locally connected, $f$ is cw-expansive (H. Kato \cite{Kato93}) and has a shadowing map then $f$ is expansive. We say that $f$ is \textit{cw-expansive} if there is $\expc>0$ such that if $C\subset M$ is connected and $\diam(f^i(C))\leq\expc$ for all $i\in\Z$ then $C$ is a singleton

	\item In Theorem \ref{thmcwexpshbraImpcw1exp} we show that
	if $f$ is cw-expansive and has a shadowing bracket then $f$ is cw$_1$-expansive.
	Shadowing brackets are defined in \S\ref{secShBra} as continuous maps
	$[\cdot,\cdot]\colon \Delta_\delta\to M$
	satisfying: 1) $[p,p]=p$ for all $p\in M$ and
	2) for all $\epsilon>0$ there is $\gamma>0$ such that if
	$\dist(p,q)\leq\gamma$ then $[p,q]\in W^s_\epsilon(q)\cap W^u_\epsilon(p).$
	We say that $f$ is \textit{cw$_1$-expansive} if there is $\expc>0$ such that if
	$C_s,C_u\subset M$ are $\expc$-stable and $\expc$-unstable connected sets then $C_s\cap C_u$ is a singleton.
	\item In Theorem \ref{thmLeeshmN-exp} we show
	that if $f$ is $N$-expansive, has the $\mathcal{T}_0$-continuous shadowing property (\cite{Lee03}, see the definition in \S\ref{secLeeCSh}) and $M$ is a closed manifold then
	$f$ is topologically hyperbolic.
	For $N\geq 1$ $f$ is $N$-\textit{expansive} (C.A. Morales \cite{Morales2012}) if there is $\expc>0$ such that if $A\subset M$ and
	$\diam(f^i(A))\leq\expc$ for all $i\in\Z$ then $A$ has
	at most $N$ points.
\end{itemize}

Contents of the paper.
In \S\ref{secPOM} we present pseudo-orbit maps and the elementary notions we need.
In \S\ref{secShm} we introduce shadowing maps which are at the bottom of Table \ref{tablaShm}.
In \S\ref{L-shadowing maps} we consider shadowing maps related to the L-shadowing property.
The \S\ref{secAlmShInvShm} is devoted to the study of the self-tuning property.
In \S\ref{secShInvShm} we develop the shift-invariant shadowing maps.
In \S\ref{Topological hyperbolicity} we consider dynamically-invariant shadowing maps and expansivity.
In \S\ref{secLeeCSh} we prove some results concerning weak forms of expansivity and the Lee's continuous shadowing property.

\section{Pseudo-orbit maps}
\label{secPOM}
In \S\ref{secPrelimPO} we start by presenting the most elementary object of our study which are pseudo-orbits.
In \S\ref{secPseudo-orbit maps} we define pseudo-orbit maps, giving examples and proving elementary properties.
In \S\ref{secDiscrepancy functions} we study discrepancy functions to measure how far is a pseudo-orbit from being a real orbit.
In \S\ref{secCharPom} we prove an equivalence for pseudo-orbit maps.
In \S\ref{secConnecting map} we introduce the connecting map which is the basis for the brackets considered in \S\ref{secIndBracket}.

\subsection{Preliminaries on pseudo-orbits}
\label{secPrelimPO}
Let $(M,\dist)$ be a compact metric space.
Let $\tM=M^\Z$ be the set of sequences $x\colon \Z\to M$.
We denote $x(i)$ as $x_i$ for all $i\in\Z$.
The set $\tM$ is endowed with the product topology and, consequently, it is compact.
For $\mu\in(0,1)$ we consider the distances
\begin{align*}
\tdist_s(x,y)&=\sum_{i\in\Z} \mu^{|i|}\dist(x_i,y_i)\\
\tdist_m(x,y)&=\max_{i\in\Z} \mu^{|i|}\dist(x_i,y_i)
\end{align*}
for all $x,y\in \tM$.
Notice that as $M$ is compact, the set $\{\mu^{|i|}\dist(x_i,y_i):i\in\Z\}$ has indeed a maximum value.
Both distances induce the product topology on $\tM$.
On $\tM$ we consider the \textit{shift homeomorphism} $\shift\colon \tM\to \tM$
defined as
\begin{equation}
	\label{ecuDefShift}
	(\shift(x))_i=x_{i+1}\text{ for all }i\in\Z.
\end{equation}
Define $\homeo(M)$ as the set of homeomorphisms of $M$.
For $f\in\homeo(M)$ and $\delta\geq 0$ define:
\[
\tM(f,\delta)=\{x\in \tM:\dist(x_{i+1},f(x_i))\leq\delta\text{ for all } i\in\Z\}.
\]
Any element of $\tM(f,\delta)$ is called $\delta$-\textit{orbit} of $f$.
\begin{rmk}
	\label{rmkMdeltaShiftInv}
	The set $\tM(f,\delta)$ is closed and shift-invariant, \textit{i.e.}, $\shift(\tM(f,\delta))=\tM(f,\delta)$.
\end{rmk}

Define the \emph{orbit map} $\orb\colon\homeo(M)\times M\to \tM$ as
$x=\orb(f,p)=\orb_f(p)\in \tM$ where $x_i=f^i(p)$ for all $i\in\Z$.
From the definitions we have that the next diagram commutes
\[
\begin{CD}
	\tM @>\shift>> \tM\\
	@A\orb_fAA @AA\orb_fA\\
	M @>>f> M\\
\end{CD}
\]
and $\orb_f$ is injective.

\begin{rmk}
	\label{rmkOrbSube}
	This means that there is a homeomorphic copy of $M$ in $\tM$ which is the image of $\orb_f$, \textit{i.e.}, the subset of orbits of $f$. Also, $\orb_f$ is
		a dynamical conjugacy between $f$ and $\shift\colon \orb_f(M)\to\orb_f(M)$.
\end{rmk}

\subsection{Pseudo-orbit maps}
\label{secPseudo-orbit maps}
For $\delta>0$ given we consider a function
$\pom_f\colon \tM(f,\delta)\to M$.

\begin{df}
	We say that $\pom_f$ is a \textit{pseudo-orbit map}
	if
	\begin{enumerate}
		\item $\pom_f$ is continuous,
		\item $\pom_f(\orb_f(p))=p$ for all $p\in M$.
	\end{enumerate}
\end{df}
The problem is to find a connection between this elementary definition and the shadowing property.
The next definition is natural in this direction.

\begin{df}
	We say that $\pom_f$ \textit{induces shadowing} if for all $\epsilon>0$ there is $\gamma\in (0,\delta]$ such that
	if $x\in \tM(f,\gamma)$ then $\pom_f(x)$ is an $\epsilon$-shadow of $x$.
\end{df}
In \S\ref{secShm} we study when a pseudo-orbit map induces shadowing. We finish this section with some properties and examples.
\begin{prop}
	If $\pom_f$ induces shadowing\footnote{We do not assume the continuity of $\pom_f$.} then
	$\pom_f(\orb_f(p))=p$ for all $p\in M$.
\end{prop}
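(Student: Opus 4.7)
The plan is to feed the real orbit of $p$ into the shadowing-inducing property itself, and read off the conclusion at time zero. Fix $p\in M$ and set $x=\orb_f(p)$, so $x_i=f^i(p)$. Then $\dist(x_{i+1},f(x_i))=0$ for all $i$, hence $x\in\tM(f,\delta)$ for every $\delta\geq 0$; in particular, $q:=\pom_f(x)$ is defined. Note that this step does not invoke continuity of $\pom_f$, which is precisely the hypothesis the footnote tells us we cannot use.

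Now, given arbitrary $\epsilon>0$, let $\gamma\in(0,\delta]$ be the constant supplied by the definition of ``induces shadowing''. The sequence $x$ is a $0$-orbit and therefore a $\gamma$-orbit, so $q$ is an $\epsilon$-shadow of $x$; evaluating the shadowing inequality at $i=0$ gives $\dist(q,p)=\dist(f^0(q),x_0)\leq\epsilon$. Since $\epsilon>0$ was arbitrary, $q=p$, i.e.\ $\pom_f(\orb_f(p))=p$. There is no real obstacle here: the point of the statement is simply to observe that the ``orbits are shadowed by themselves'' axiom (the defining axiom of pseudo-orbit maps that distinguishes them from arbitrary continuous functions on $\tM(f,\delta)$) is already forced once one knows that $\pom_f$ induces shadowing, so it need not be postulated separately in that case.
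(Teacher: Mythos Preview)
Your proof is correct and follows essentially the same approach as the paper's: feed the true orbit $x=\orb_f(p)$ into the shadowing hypothesis for every $\epsilon>0$, then read off $\dist(\pom_f(x),p)\leq\epsilon$ at $i=0$. The only cosmetic difference is that the paper first states the shadowing inequality for all $i$ before specializing to $i=0$, whereas you go straight to $i=0$.
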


\begin{proof}
	Given $p\in M$ we have that $x=\orb_f(p)$ is a $\delta$-orbit for all $\delta>0$.
	Then, $\pom_f(x)$ $\epsilon$-shadows $x$ for all $\epsilon>0$.
	This means $\dist(f^i(\pom_f(x)),x_i)<\epsilon$ for all $\epsilon>0$ and $f^i(\pom_f(x))=x_i$, \textit{i.e.}
	$f^i(\pom_f(x))=f^i(p)$ and for $i=0$ we have $\pom_f(\orb_f(p))=p$.
\end{proof}

\begin{ex}
	\label{ExProjPOM}
	Perhaps, the simplest example of a pseudo-orbit map is the projection $\pom_f(x)=x_0$ defined for all $x\in \tM$.
	It is continuous because the product topology in $\tM$ makes continuous its projections.
	It is easy to see that $\pom_f(\orb_f(p))=p$, thus, the projection on the 0-coordinate is a pseudo-orbit map.
	Thus, any $f$ has a pseudo-orbit map and its existence has no dynamical consequence.
	However it is well understood when the projection induces shadowing, see \S\ref{secHsh}.
\end{ex}

In Example \ref{exaPANoShm} we will show that a pseudo-Anosov homeomorphism of the two-dimensional sphere has the shadowing property but has no pseudo-orbit map inducing shadowing.

\subsection{Discrepancy functions}
\label{secDiscrepancy functions}
This section is to introduce some techniques that will be used in
\S\ref{secCharPom} and in the proof of Proposition
\ref{propAlmSHMimpL-shm}.
Following \cite{KOP},
for $x\in \tM$ define the $f$-\textit{discrepancy} functions
\begin{align*}
	\mathcal{D}^1_f(x)  &=\sup_{i\in\Z}\dist(x_{i+1},f(x_i)),\\
	\mathcal{D}^2_f(x) &=\sup_{i\in\Z}\tdist_s(\shift^i(x),\orb_f(x_i)).\\
\end{align*}

\begin{rmk}
	It is clear that $x\in \tM$ is a $\delta$-orbit if and only if
	$\mathcal{D}^1_f(x)\leq \delta$.
\end{rmk}

The next result proves that both functions are equivalent ways to measure how far is $x\in \tM$ from being an orbit of $f$.

\begin{prop}
	\label{propDiscrep}
	The discrepancy functions satisfy:
	\begin{enumerate}
		\item for all $x\in \tM$, $\mathcal{D}^*_f(x)\geq 0$ with equality if and only if $x$ is an orbit of $f$, $*=1,2$,
		\item $\mathcal{D}^*_f(x)=\mathcal{D}^*_f(\shift(x))$ for all $x\in \tM$, $*=1,2$,
		\item $\mathcal{D}^1_f(x)\leq2\mathcal{D}^2_f(x)$ for all $x\in \tM$,
		\item for all $\epsilon>0$ there is $\delta>0$ such that
		if $\mathcal{D}^1_f(x)\leq\delta$ then
		$\mathcal{D}^2_f(x)\leq\epsilon$.
	\end{enumerate}
\end{prop}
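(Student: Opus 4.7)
\medskip

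\noindent\textbf{Proof plan for Proposition \ref{propDiscrep}.}

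For (1), both $\mathcal{D}^1_f$ and $\mathcal{D}^2_f$ are suprema of nonnegative quantities, so the sign is immediate. Vanishing of $\mathcal{D}^1_f(x)$ says $x_{i+1}=f(x_i)$ for every $i$, which is exactly $x=\orb_f(x_0)$. Vanishing of $\mathcal{D}^2_f(x)$ says $\shift^i(x)=\orb_f(x_i)$ for every $i$; reading the $j=1$ coordinate of this equality gives $x_{i+1}=f(x_i)$, and the converse direction is obvious since a real orbit satisfies $\shift^i(\orb_f(p))=\orb_f(f^i(p))$. For (2), both formulas are suprema over $i\in\Z$, and a reindexing $i\mapsto i+1$ shows $\mathcal{D}^*_f(\shift(x))=\mathcal{D}^*_f(x)$ for $*=1,2$; no inequality is involved.

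For (3), I would isolate the $j=1$ summand in the definition of $\tdist_s$. For each $i\in\Z$,
\[
\tdist_s\bigl(\shift^i(x),\orb_f(x_i)\bigr)=\sum_{j\in\Z}\mu^{|j|}\,\dist\bigl(x_{i+j},f^j(x_i)\bigr)\;\geq\;\mu\,\dist\bigl(x_{i+1},f(x_i)\bigr),
\]
so $\mathcal{D}^2_f(x)\geq\mu\,\mathcal{D}^1_f(x)$. Taking the standing choice $\mu=1/2$ (the only value needed for the estimates to come), this gives $\mathcal{D}^1_f(x)\leq 2\mathcal{D}^2_f(x)$.

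For (4), the main obstacle, I would split the sum defining $\tdist_s(\shift^i(x),\orb_f(x_i))$ into a finite central block $|j|\leq N$ and a tail $|j|>N$. The tail is bounded by $\diam(M)\sum_{|j|>N}\mu^{|j|}$, which can be made less than $\epsilon/2$ by choosing $N$ large, using $\mu\in(0,1)$ and compactness of $M$. For the central block, the key estimate is the telescoping bound
\[
\dist\bigl(x_{i+j},f^j(x_i)\bigr)\leq\sum_{k=0}^{j-1}\dist\bigl(f^k(x_{i+j-k}),f^{k+1}(x_{i+j-k-1})\bigr),
\]
and applying the uniform continuity of $f^k$ on the compact space $M$, for $0\leq k\leq N$, to each summand (each of which has the inner distance bounded by $\delta=\mathcal{D}^1_f(x)$); an analogous estimate handles $j<0$ using uniform continuity of the iterates of $f^{-1}$. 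Choosing $\delta$ small enough that every such contribution is at most $\epsilon/(2S)$ with $S=\sum_{|j|\leq N}\mu^{|j|}$ yields a bound of $\epsilon/2$ on the central block, and hence $\mathcal{D}^2_f(x)\leq\epsilon$, uniformly in $i$.

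The only nontrivial step is the tail-plus-uniform-continuity argument for (4); the other three parts reduce to reading off a single coordinate of the definition.
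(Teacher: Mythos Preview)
Your proposal is correct and follows essentially the same route as the paper: isolate the $j=1$ summand for (3), and for (4) split $\tdist_s$ into a tail controlled by $\diam(M)$ and a central block controlled by taking $\delta$ small. The only cosmetic differences are that the paper black-boxes your telescoping step (it simply asserts the existence of $\delta$ making $\dist(f^j(x_0),x_j)<\gamma$ for $|j|\leq N$), and it deduces the vanishing criterion for $\mathcal{D}^2_f$ in (1) from (3)+(4) rather than by reading the $j=1$ coordinate directly as you do.
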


\begin{proof}
	It is clear that they are non-negative and that $\mathcal{D}^1_f$ vanishes only at orbits. The same conclusion for $\mathcal{D}^2_f$ follows from \textit{(3)} and \textit{(4)} that we prove below.
	The proof of \textit{(2)} for $*=1$ is direct from the definition. For $*=2$ we define $y=\shift(x)$ and
	\[
	\mathcal{D}^2_f(y)=
	\sup_{i\in\Z}\tdist_s(\shift^i(y),\orb_f(y_i))
	=
	\sup_{i\in\Z}\tdist_s(\shift^{i+1}(x),\orb_f(x_{i+1}))
	=
	\mathcal{D}^2_f(x).
	\]

	To prove \textit{(3)} notice that for all $i\in\Z$ we have
	\begin{align*}
		\tdist_s(\shift^i(x),\orb_f(x_i))
		&=\sum_{j\in\Z}\frac{\dist([\shift^i(x)]_j,[\orb_f(x_i)]_j)}{2^{|j|}}\\
		&{\geq}\frac{\dist([\shift^i(x)]_1,[\orb_f(x_i)]_1)}{2} =\frac{\dist(x_{i+1},f(x_i))}{2}\\
	\end{align*}
	the inquality follows by putting $j=1$ in the sum.
	Taking $\sup_{i\in\Z}$ on both sides we obtain \textit{(3)}.

	To prove \textit{(4)} consider $\epsilon>0$ given.
	Take $n\geq 1$ such that
	$\sum_{|i|>n}\frac{\diam(M)}{2^{|i|}}<\epsilon/2$.
	Let $$\gamma=\frac{\epsilon}{2}\left(\sum_{|i|\leq n} \frac{1}{2^{|i|}}\right)^{-1}.$$
	Take $\delta>0$	such that if $x$ is a $\delta$-orbit then $\dist(f^i(x_0),x_i)<\gamma$ for $|i|\leq n$.
	Then
	\begin{align*}
		\tdist_s(x,\orb_f(x_0))&=
		\sum_{i\in\Z}\frac{\dist(x_i,f^i(x_0))}{2^{|i|}}
		=\sum_{|i|\leq n}\frac{\dist(x_i,f^i(x_0))}{2^{|i|}}+
		\sum_{|i|>n}\frac{\dist(x_i,f^i(x_0))}{2^{|i|}} \\
		&<\sum_{|i|\leq n}\frac{\gamma}{2^{|i|}}+
		\sum_{|i|>n}\frac{\diam(M)}{2^{|i|}}
		<\epsilon/2+\epsilon/2=\epsilon.
	\end{align*}
	Since $\shift^i(x)$ is also a $\delta$-orbit, we have
	\[
	\tdist_s(\shift^i(x),\orb_f((\shift^i(x))_0))=
	\tdist_s(\shift^i(x),\orb_f(x_i))<\epsilon
	\]
	for all $i\in\Z$. Taking the supremum again the proof ends.
\end{proof}

\subsection{A characterization}
\label{secCharPom}
The next characterization of pseudo-orbit maps will be used in the proof of Proposition \ref{propShmSH}.

\begin{prop}
	\label{propDigre}
	For a continuous map $\pom_f\colon \tM(f,\delta)\to M$
	the following are equivalent:
	\begin{enumerate}
		\item $\pom_f$ is a pseudo-orbit map,
		\item for all $\epsilon>0$ there is $\rho>0$ such that if
		$x\in \tM(f,\rho)$ then
		$\dist(x_i,\pom_f(\shift^i(x)))<\epsilon$ for all $i\in\Z$.
		\item for all $\epsilon>0$ there is $\rho>0$ such that if
		$x\in \tM(f,\rho)$ then
		$\dist(x_0,\pom_f(x))<\epsilon$.
	\end{enumerate}
\end{prop}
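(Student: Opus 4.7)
The plan is to establish the cycle $(1)\Rightarrow(3)\Rightarrow(2)\Rightarrow(1)$; the two easy implications come from a direct substitution together with shift-invariance of $\tM(f,\rho)$, while the heart of the proof is $(1)\Rightarrow(3)$, which is a uniform-continuity argument bridged by the discrepancy functions of \S\ref{secDiscrepancy functions}.

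First I would dispose of $(3)\Rightarrow(2)$ using Remark \ref{rmkMdeltaShiftInv}: if $x\in\tM(f,\rho)$ then $\shift^i(x)\in\tM(f,\rho)$ for every $i\in\Z$, so applying (3) to $\shift^i(x)$ yields $\dist((\shift^i(x))_0,\pom_f(\shift^i(x)))<\epsilon$; but $(\shift^i(x))_0=x_i$. The implication $(2)\Rightarrow(3)$ is immediate by taking $i=0$. The implication $(3)\Rightarrow(1)$ is also essentially free: $\orb_f(p)$ is a $0$-orbit, hence a $\rho$-orbit for every $\rho>0$, so (3) forces $\dist(p,\pom_f(\orb_f(p)))<\epsilon$ for every $\epsilon>0$, which gives $\pom_f(\orb_f(p))=p$; continuity is a hypothesis of the proposition.

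The substantive step is $(1)\Rightarrow(3)$. Fix $\epsilon>0$. Since $\tM(f,\delta)$ is closed in the compact space $\tM$, it is compact, and the continuous map $\pom_f$ is uniformly continuous there with respect to $\tdist_s$. Choose $\eta>0$ such that for $x,y\in\tM(f,\delta)$ with $\tdist_s(x,y)<\eta$ one has $\dist(\pom_f(x),\pom_f(y))<\epsilon$. Now I would invoke Proposition \ref{propDiscrep}(4) to obtain $\rho\in(0,\delta]$ with the property that $x\in\tM(f,\rho)$ implies $\mathcal{D}^2_f(x)\leq\eta/2$. Since
\[
\tdist_s(x,\orb_f(x_0))\leq \sup_{i\in\Z}\tdist_s(\shift^i(x),\orb_f(x_i))=\mathcal{D}^2_f(x),
\]
the $0$-term of the supremum bound gives $\tdist_s(x,\orb_f(x_0))<\eta$. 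Uniform continuity then produces
\[
\dist(\pom_f(x),\pom_f(\orb_f(x_0)))<\epsilon,
\]
and since $\pom_f$ is a pseudo-orbit map, $\pom_f(\orb_f(x_0))=x_0$, giving $\dist(x_0,\pom_f(x))<\epsilon$ as required.

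The only point to watch is that Proposition \ref{propDiscrep}(4) is applied to $\mathcal{D}^2_f$ restricted to $\tM(f,\delta)$, but the proposition is stated on all of $\tM$, so the restriction causes no difficulty. I expect the main (mildly) delicate step to be keeping track of the two ``smallness'' parameters: the uniform-continuity modulus $\eta$ for $\pom_f$, and the pseudo-orbit threshold $\rho$ extracted from $\mathcal{D}^1_f\leq\rho\Rightarrow\mathcal{D}^2_f\leq\eta$. Once these are chosen in the correct order, the argument closes immediately.
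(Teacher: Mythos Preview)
Your proof is correct and follows essentially the same route as the paper: the equivalence of (2) and (3) via shift-invariance of $\tM(f,\rho)$ and $(\shift^i(x))_0=x_i$, the implication $(3)\Rightarrow(1)$ by testing on $\orb_f(p)$, and the key step $(1)\Rightarrow(3)$ via uniform continuity of $\pom_f$ combined with Proposition~\ref{propDiscrep}(4) to force $\tdist_s(x,\orb_f(x_0))$ small. The only cosmetic difference is that you make the passage through $\mathcal{D}^2_f$ explicit, whereas the paper simply cites Proposition~\ref{propDiscrep} for the bound on $\tdist_s(x,\orb_f(x_0))$.
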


\begin{proof}
	First notice that \textit{(2)} and \textit{(3)} are equivalent because $(\shift^i(x))_0=x_i$ for all $i\in\Z$ and
	$x\in \tM(f,\rho)$ if and only if $\shift^i(x)\in \tM(f,\rho)$ for all $i\in\Z$ (Remark \ref{rmkMdeltaShiftInv}).

	To prove that \textit{(1)} implies \textit{(3)} consider $\epsilon>0$ given and take $\nu>0$ such that
	if $\tdist_s(x,y)<\nu$ then $\dist(\pom_f(x),\pom_f(y))<\epsilon$ (uniform continuity of $\pom_f$).
	By Proposition \ref{propDiscrep} there is $\rho>0$ such that if $x$ is a $\rho$-orbit then
	$\tdist_s(x,\orb_f(x_0))<\nu.$
	Therefore, given any $x\in \tM(f,\rho)$ we have
	$\dist(x_0,\pom_f(x))
	=\dist(\pom_f(\orb_f(x_0)),\pom_f(x))
	<\epsilon$.

	Let us show that \textit{(3)} implies \textit{(1)}. Given $p\in M$ let $x=\orb_f(p)$.
	Then, $x_0=p$ and $x\in \tM(f,\rho)$ for all $\rho>0$.
	Thus,
	$\dist(x_0,\pom_f(x))<\epsilon$ for $\epsilon>0$.
	This implies $x_0=\pom_f(x)$ and
	$$p=x_0=\pom_f(x)=\pom_f(\orb_f(p)).$$
	Therefore $\pom_f$ is a pseudo-orbit map.
\end{proof}

\subsection{Connecting map}
\label{secConnecting map}
Define the \textit{connecting map}
$$\con\colon \homeo(M)\times M^2\to \tM$$ as
$\con(f,p,q)=\con_f(p,q)=x$ where
\[
x_i=\left\{
\begin{array}{ll}
	f^i(p) & \text{ for } i<0, \\
	f^i(q) & \text{ for } i\geq 0.
\end{array}
\right.
\]
On $\homeo(M)$ we consider the $C^0$ distance
$$\distcc(f,g)=\sup\{\dist(f(p),g(p)), \dist(f^{-1}(p),g^{-1}(p)):p\in M\}.$$

\begin{prop}
	\label{propConCont}
	The connecting map  is continuous and uniformly continuous with respect to the variables in $M^2$:
	for all $f\in\homeo(M)$ and $\epsilon>0$ there is $\delta>0$ such that
	if $\distcc(f,g)<\delta$, $\dist(p_1,p_2)<\delta$ and $\dist(q_1,q_2)<\delta$
	then
	$$\dist(\con_f(p_1,q_1),\con_g(p_2,q_2))<\epsilon.$$
\end{prop}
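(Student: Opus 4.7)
The plan is to work with the metric $\tdist_m$ on $\tM$ (the argument with $\tdist_s$ is analogous), split the index set into a bounded window and a tail, and dispose of each separately. Fix $f\in\homeo(M)$ and $\epsilon>0$. Choose $N\geq 1$ large enough that $\mu^{|i|}\diam(M)<\epsilon$ whenever $|i|>N$. Then for every $x,y\in\tM$, the contribution of indices $|i|>N$ to $\tdist_m(x,y)$ is automatically bounded by $\epsilon$, regardless of how far apart $x_i$ and $y_i$ are. So the only task is to control $\mu^{|i|}\dist(x_i,y_i)$ for $|i|\leq N$, which reduces to bounding $\dist(f^i(q_1),g^i(q_2))$ for $0\leq i\leq N$ and $\dist(f^i(p_1),g^i(p_2))$ for $-N\leq i<0$.

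The heart of the proof is then the following elementary lemma: for every fixed $N\geq 1$ and $\eta>0$ there exists $\delta>0$ such that $\distcc(f,g)<\delta$ and $\dist(a,b)<\delta$ imply $\dist(f^k(a),g^k(b))<\eta$ for all integers $k$ with $|k|\leq N$. I would prove this by induction on $|k|$, using the triangle inequality
\[
\dist(f^{k+1}(a),g^{k+1}(b))\leq \dist(f(f^k(a)),f(g^k(b)))+\dist(f(g^k(b)),g(g^k(b))),
\]
where the first summand is controlled by the uniform continuity of $f$ on the compact space $M$ applied to the inductive bound on $\dist(f^k(a),g^k(b))$, and the second summand is controlled by $\distcc(f,g)<\delta$. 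Negative iterates are handled identically after replacing $f,g$ by $f^{-1},g^{-1}$, which is why $\distcc$ is defined to control both $f$ and $f^{-1}$ simultaneously.

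With the lemma in hand, pick $\delta>0$ so that $\distcc(f,g)<\delta$, $\dist(p_1,p_2)<\delta$, $\dist(q_1,q_2)<\delta$ force $\dist(f^i(p_1),g^i(p_2))<\epsilon$ and $\dist(f^i(q_1),g^i(q_2))<\epsilon$ for every $|i|\leq N$. Setting $x=\con_f(p_1,q_1)$ and $y=\con_g(p_2,q_2)$, we then have $\mu^{|i|}\dist(x_i,y_i)<\epsilon$ for $|i|\leq N$ by the lemma and $\mu^{|i|}\dist(x_i,y_i)\leq \mu^{|i|}\diam(M)<\epsilon$ for $|i|>N$ by the choice of $N$. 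Taking the maximum yields $\tdist_m(x,y)<\epsilon$, which is the desired conclusion.

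The main obstacle is the iterated estimate in the lemma: the constant produced by the induction deteriorates with $|k|$, because at each step one absorbs both an error from the uniform continuity of $f$ and a fresh error of size $\distcc(f,g)$. This is harmless because $N$ is fixed before $\delta$ is chosen, so the finite chain of dependencies can be collapsed into a single $\delta$; but it is the only step that is not purely formal, and it is what makes the continuity uniform in $(p,q)\in M^2$ but only pointwise in the functional variable $f$.
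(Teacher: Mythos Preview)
Your proposal is correct and follows essentially the same strategy as the paper: split the index set into a finite window and a tail, kill the tail with the diameter bound, and control the window via a finite-iterate estimate $\dist(f^k(a),g^k(b))<\eta$ for $|k|\leq N$ obtained from uniform continuity of $f$ and the $\distcc$ bound. The only cosmetic difference is that the paper carries out the computation with the sum metric $\tdist_s$ (taking $\mu=1/2$) rather than $\tdist_m$, and accordingly chooses the tail cutoff and the window tolerance so that the two partial sums each contribute at most $\epsilon/2$; your use of $\tdist_m$ is an equally valid (and slightly cleaner) variant.
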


\begin{proof}
	Given $\epsilon>0$ take $j\geq 0$ and $\gamma>0$ such that
	\[
	\sum_{|i|>j} \frac{\diam(M)}{2^{|i|}}<\epsilon/2
	\text{ and }
	\sum_{|i|\leq j} \frac{\gamma}{2^{|i|}}<\epsilon/2
	\]
	Given $f\in\homeo(M)$ consider $\delta>0$ such that
	if $\dist(r_1,r_2)<\delta$ and $\distcc(g,f)<\delta$
	then $\dist(f^i(r_1),g^i(r_2))<\gamma$ for all
	$|i|\leq j$.

	In this way, if $\distcc(f,g)<\delta$,
	$\dist(p_1,p_2)<\delta$, $\dist(q_1,q_2)<\delta$,
	$x=\con_f(p_1,q_1)$ and $y=\con_g(p_2,q_2)$
	then
	\[
	\dist(x,y)=\sum_{i\in\Z} \frac{\dist(x_i,y_i)}{2^{|i|}}
	=\sum_{|i|>j} \frac{\diam(M)}{2^{|i|}}
	+
	\sum_{|i|\leq j} \frac{\gamma}{2^{|i|}}<\epsilon
	\]
	and the proof ends.
\end{proof}

\begin{cor}
	\label{coroContOrb}
	The orbit map is continuous and uniformly continuous with respect to the second variable:
	for all $f\in\homeo(M)$ and $\epsilon>0$ there is $\delta>0$ such that
	if $\distcc(f,g)<\delta$ and $\dist(p,q)<\delta$ then
	$\dist(\orb_f(p),\orb_g(q))<\epsilon$.
\end{cor}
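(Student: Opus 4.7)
The plan is to obtain this as an immediate specialization of Proposition \ref{propConCont}. The key observation is that for every $f\in\homeo(M)$ and every $p\in M$ the orbit and the connecting map agree on the diagonal, namely
\[
\orb_f(p)=\con_f(p,p),
\]
since both definitions give the sequence $i\mapsto f^i(p)$ regardless of whether $i<0$ or $i\geq 0$.

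Given $f\in\homeo(M)$ and $\epsilon>0$, I would apply Proposition \ref{propConCont} to $f$ and $\epsilon$ to obtain $\delta>0$ such that whenever $\distcc(f,g)<\delta$, $\dist(p_1,p_2)<\delta$ and $\dist(q_1,q_2)<\delta$ one has $\dist(\con_f(p_1,q_1),\con_g(p_2,q_2))<\epsilon$. Now suppose $\distcc(f,g)<\delta$ and $\dist(p,q)<\delta$. Taking $p_1=q_1=p$ and $p_2=q_2=q$, all three hypotheses are satisfied, so
\[
\dist(\orb_f(p),\orb_g(q))=\dist(\con_f(p,p),\con_g(q,q))<\epsilon,
\]
which is the desired conclusion.

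There is no real obstacle here; the only thing to be careful about is that $\delta$ is allowed to depend on $f$ (as in Proposition \ref{propConCont}), so what we get is continuity at each $f$ jointly in $(f,p)$, together with uniformity in the second variable $p$ for a fixed $f$, exactly as stated.
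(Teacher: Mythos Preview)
Your proof is correct and follows exactly the same approach as the paper: the paper's proof is the single line ``It follows from Proposition \ref{propConCont} because $\orb_f(p)=\con_f(p,p)$ for all $f\in\homeo(M)$ and $p\in M$,'' and you have simply written out this specialization in full detail.
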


\begin{proof}
	It follows from Proposition \ref{propConCont} because $\orb_f(p)=\con_f(p,p)$ for all $f\in\homeo(M)$ and $p\in M$.
\end{proof}

\subsection{Induced brackets}
\label{secIndBracket}
Consider the $\delta$-neighborhood of the diagonal
$$\Delta_\delta=\{(p,q)\in M^2:\dist(p,q)\leq\delta\}.$$
Suppose that $\pom_f\colon \tM(f,\delta)\to M$ is a pseudo-orbit map.
Since $\con_f(\Delta_\delta)\subset \tM(f,\delta)$ we can define a bracket
$[\cdot,\cdot]\colon \Delta_\delta\to M$ as
$$[p,q]=\pom_f(\con_f(p,q)).$$
We say that this bracket is \textit{induced} by $\pom_f$.

\begin{prop}
	The induced bracket satisfies the following conditions:
	\begin{enumerate}
		\item it is continuous,
		\item $[p,p]=p$ for all $p\in M$.
	\end{enumerate}
\end{prop}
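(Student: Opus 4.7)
The proof is essentially immediate from the construction of the bracket together with the axioms of a pseudo-orbit map. The plan is to verify both items by unpacking the definition $[p,q]=\pom_f(\con_f(p,q))$ and invoking the continuity statements already proved for $\pom_f$ and $\con_f$.

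For item (1), I would observe that $[\cdot,\cdot]$ is the composition of the map $(p,q)\mapsto \con_f(p,q)$ from $\Delta_\delta\subset M^2$ into $\tM(f,\delta)$ with the pseudo-orbit map $\pom_f\colon \tM(f,\delta)\to M$. The first map is continuous by Proposition \ref{propConCont} (using only continuity with respect to the variables in $M^2$ for a fixed $f$), while $\pom_f$ is continuous by the first axiom in the definition of pseudo-orbit map. Thus $[\cdot,\cdot]$ is continuous as a composition of continuous maps.

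For item (2), I would note that the definition of the connecting map gives $\con_f(p,p)=\orb_f(p)$ directly: for $i<0$ the $i$-th coordinate is $f^i(p)$ and for $i\geq 0$ it is also $f^i(p)$, so both pieces agree with the orbit of $p$. Therefore
\[
[p,p]=\pom_f(\con_f(p,p))=\pom_f(\orb_f(p))=p,
\]
where the last equality is the second axiom of a pseudo-orbit map.

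There is no real obstacle here: both claims reduce to applying previously established facts. The only mild subtlety worth remarking on is that item (1) uses only continuity in the $M^2$ variables with $f$ fixed, which is a weaker statement than the joint continuity proved in Proposition \ref{propConCont}; and item (2) relies on the slightly notational observation that the two branches in the definition of $\con_f(p,p)$ coincide on the diagonal, giving an honest real orbit so that the second pseudo-orbit axiom applies.
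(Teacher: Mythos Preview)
Your proof is correct and follows essentially the same approach as the paper: continuity of the bracket as a composition of $\con_f$ (Proposition~\ref{propConCont}) with the continuous $\pom_f$, and the diagonal identity via $\con_f(p,p)=\orb_f(p)$ together with the second pseudo-orbit axiom. The paper's proof is just a more compressed version of what you wrote.
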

\begin{proof}
	The continuity of the bracket follows by the continuity of $\pom_f$ (definition) and $\con_f$ (Proposition \ref{propConCont}).
	For $p=q$ we have
	$
	[p,p]=\pom_f(\con_f(p,p))=\pom_f(\orb_f(p))=p.
	$
\end{proof}

\section{Shadowing maps}
\label{secShm}

In this section we introduce the shadowing maps.
In \S\ref{secShadowing maps} we prove a useful equivalence.
In \S\ref{Expansivity and shadowing maps} we show that cw-expansivity implies expansivity assuming the existence of a shadowing map and the local connection of the space.
In \S\ref{secHsh} we present known results characterizing when the projection is a shadowing map.
In \S\ref{secShBra} we define shadowing brackets. We show
that cw-expansivity with a shadowing bracket implies cw$_1$-expansivity; which in turn implies expansivity on compact surfaces.

\subsection{Shadowing maps}
\label{secShadowing maps}
The definition of shadowing map is given by considering two things. On the one hand, at least it should be a pseudo-orbit map inducing the shadowing property.
On the other hand, as explained in \S\ref{secIntro}, topologically hyperbolic homeomorphisms have shift-invariant shadowing maps.
The shift-invariance means that $\dist(f^i(\shm_f(x)),\shm_f(\shift^i(x))=0$ for all $i\in\Z$.
In Proposition \ref{propShmSH} we prove that both viewpoints coincide.
Definition \ref{dfShm}
is given as a weak form of shift-invariance.
\begin{df}
	\label{dfShm}
	A pseudo-orbit map
	$\shm_f\colon \tM(f,\delta)\to M$
	is a \textit{shadowing map} if
	for all $\epsilon>0$ there is $\gamma\in(0,\delta]$ such that if $x\in \tM(f,\gamma)$ then
	\begin{equation}
		\label{ecuShm}
		\dist(f^i(\shm_f(x)),\shm_f(\shift^i(x))\leq\epsilon\text{ for all }i\in\Z.
	\end{equation}
\end{df}
This condition is exactly what a pseudo-orbit map needs to induce the shadowing property.
\begin{prop}
	\label{propShmSH} A pseudo-orbit map induces shadowing if and only if it is a shadowing map.
	In particular, every homeomorphism with a shadowing map has the shadowing property.
\end{prop}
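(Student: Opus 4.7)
The plan is to obtain both implications via a triangle inequality, comparing $f^i(\shm_f(x))$ with $\shm_f(\shift^i(x))$ and the latter with $x_i$. The bridge linking $\shm_f(\shift^i(x))$ to $x_i$ is condition (2) of Proposition \ref{propDigre}, which, since $\shm_f$ is a pseudo-orbit map, says: for every $\epsilon>0$ there is $\rho>0$ such that $x\in\tM(f,\rho)$ implies $\dist(x_i,\shm_f(\shift^i(x)))<\epsilon$ for every $i\in\Z$. This estimate does the heavy lifting in both directions.

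For the direction ``shadowing map implies induces shadowing'', given $\epsilon>0$ I would use Definition \ref{dfShm} to pick $\gamma_1\in(0,\delta]$ so that $\dist(f^i(\shm_f(x)),\shm_f(\shift^i(x)))\leq\epsilon/2$ on $\tM(f,\gamma_1)$, then use the bridge above to pick $\gamma_2$ so that $\dist(x_i,\shm_f(\shift^i(x)))<\epsilon/2$ on $\tM(f,\gamma_2)$, and set $\gamma=\min(\gamma_1,\gamma_2)$. The triangle inequality then gives $\dist(f^i(\shm_f(x)),x_i)\leq\epsilon$ for all $i\in\Z$, making $\shm_f(x)$ an $\epsilon$-shadow of any $x\in\tM(f,\gamma)$.

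Conversely, if $\shm_f$ induces shadowing, then given $\epsilon>0$ I would pick $\gamma$ so that $\dist(f^j(\shm_f(y)),y_j)\leq\epsilon/2$ for every $y\in\tM(f,\gamma)$ and $j\in\Z$. Applying this to $y=x$ and, separately, to $y=\shift^i(x)$ with $j=0$ (recall $\tM(f,\gamma)$ is shift-invariant by Remark \ref{rmkMdeltaShiftInv}), the triangle inequality delivers $\dist(f^i(\shm_f(x)),\shm_f(\shift^i(x)))\leq\epsilon$, which is exactly the bound required in Definition \ref{dfShm}. The final clause about the shadowing property is then automatic from the ``induces shadowing'' direction. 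I do not foresee a genuine obstacle here; the only care required is to apply the estimates uniformly in $i\in\Z$, which is precisely why it is convenient to invoke Proposition \ref{propDigre}(2) instead of its single-coordinate version (3).
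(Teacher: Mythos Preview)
Your proof is correct and follows essentially the same route as the paper's: both directions are handled by the same triangle-inequality argument, pivoting at $\shm_f(\shift^i(x))$ and invoking Proposition~\ref{propDigre} for the bridge to $x_i$ in the ``shadowing map $\Rightarrow$ induces shadowing'' direction. The only cosmetic difference is that you present the two implications in the opposite order and take $\gamma=\min(\gamma_1,\gamma_2)$ explicitly where the paper simply shrinks $\gamma$ as needed.
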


\begin{proof}
	(Direct). Given $\epsilon>0$ take $\gamma>0$ such that
	every $\gamma$-orbit $x$ is $\frac{\epsilon}{2}$-shadowed by $\shm_f(x)$.
	Notice that $\shift^j(x)$ also is a $\gamma$-orbit for all $j\in\Z$.
	Thus
	\[
	\dist(f^i(\shm_f(\shift^j(x))),(\shift^j(x))_i)\leq\epsilon/2
	\text{ for all }i,j\in\Z.
	\]
	For $j=0$ we have
	\[
	\dist(f^i(\shm_f(x)),x_i)\leq\epsilon/2\text{ for all }i\in\Z
	\]
	and for $i=0$ we conclude
	\[
	\dist(\shm_f(\shift^j(x)),(\shift^j(x))_0)=
	\dist(\shm_f(\shift^j(x)),x_j)\leq\epsilon/2
	\text{ for all }j\in\Z.
	\]
	The triangle inequality gives
	\[
	\begin{array}{rl}
		\dist(f^k(\shm_f(x)),\shm_f(\shift^k(x)))& \leq
		\dist(f^k(\shm_f(x)),x_k)+
		\dist(x_k,\shm_f(\shift^k(x)))\\
		&\leq \epsilon/2+\epsilon/2=\epsilon
	\end{array}
	\]
	for all $k\in\Z$. This proves \eqref{ecuShm}.

	(Converse). Given $\epsilon>0$ take $\gamma>0$ from Proposition \ref{propDigre}
	such that if $x\in \tM(f,\gamma)$ then
	$$\dist(x_i,\shm_f(\shift^i(x)))\leq\epsilon/2,\text{ for all }i\in\Z.$$
	By hypothesis, considering a smaller $\gamma$ if needed, we can also assume
	$$\dist(f^i(\shm_f(x)),\shm_f(\shift^i(x))\leq\epsilon/2\text{ for all }i\in\Z.$$
	The triangle inequality gives us
	$$\dist(f^i(\shm_f(x)),x_i)\leq\epsilon\text{ for all }i\in\Z.$$
	This proves that $\shm_f$ induces shadowing.
\end{proof}

\subsection{Cw-expansivity and shadowing maps}
\label{Expansivity and shadowing maps}

We will show that there are homeomorphisms with shadowing do not admitting a shadowing map.
For this purpose we recall the notion of continuum-wise expansivity.

A subset $C\subset M$ is a \textit{continuum} if it is closed and connected.
Following H. Kato we say that $f$ is \textit{cw-expansive} if there is $\expc>0$
such that if $C\subset M$ is a continuum and $\diam(f^i(C))\leq\expc$ for all
$i\in\Z$ then $C$ is a singleton.
If we define the \textit{dynamical ball}
\[
\Gamma_\expc(p)=\{q\in M:\dist(f^i(p),f^i(q))\leq\expc\text{ for all }i\in\Z\}
\]
then $f$ is cw-expansive if and only if there is $\expc>0$ such that
$\Gamma_\expc(p)$ is totally disconected for all $p\in M$.


\begin{thm}
	\label{thmShmCwExpImpExp}
	If $M$ is locally connected, $f$ is cw-expansive and has a shadowing map then $f$ is expansive.
\end{thm}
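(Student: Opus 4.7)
The plan is to prove the contrapositive: if $f$ fails to be expansive, it fails to be cw-expansive. Let $\expc$ denote a cw-expansivity constant for $f$. The engine of the argument is to start from a single pair $p\neq q$ whose orbits stay pointwise close and use the shadowing map to inflate this pair into a non-trivial continuum whose iterates all stay trapped in the $(\expc/2)$-neighborhood of the orbit of $p$, contradicting cw-expansivity.

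First I would fix the constants. Using Proposition \ref{propShmSH}, choose $\delta_1>0$ such that every $\delta_1$-orbit is $(\expc/3)$-shadowed by $\shm_f$. By uniform continuity of $f$, pick $\beta\leq \expc/6$ small enough that $\dist(a,b)\leq\beta$ implies $\dist(f(a),f(b))\leq \delta_1/2$. Using uniform local connectedness of the compact locally connected space $M$ (which follows from the fact that $M$ is a finite disjoint union of Peano continua), pick $\alpha\in(0,\beta]$ so that any two $\alpha$-close points of $M$ are contained in a continuum (closed connected set) of diameter at most $\beta$.

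Assuming $\alpha$ is not an expansivity constant, fix $p\neq q$ with $\dist(f^i(p),f^i(q))\leq\alpha$ for every $i\in\Z$. For each $i$ select a continuum $E_i\ni f^i(p),f^i(q)$ with $\diam(E_i)\leq\beta$ and form $\Omega=\prod_{i\in\Z}E_i\subset \tM$, which is compact by Tychonoff and connected since arbitrary products of connected spaces are connected in the product topology. A short estimate using the modulus of continuity of $f$ shows each $x\in\Omega$ is a $\delta_1$-orbit, so $\Omega\subset \tM(f,\delta_1)$ and $\shm_f$ is defined on $\Omega$. Hence $K:=\shm_f(\Omega)$ is a continuum in $M$ (continuous image of a compact connected set); because $\orb_f(p),\orb_f(q)\in\Omega$ and $\shm_f\circ\orb_f=\id$, it contains the distinct points $p$ and $q$, so is non-trivial. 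For any $y=\shm_f(x)$ with $x\in\Omega$, the shadowing bound $\dist(f^i(y),x_i)\leq\expc/3$ and the definition of $\Omega$ give
\[
\dist(f^i(y),f^i(p))\leq \expc/3+\beta\leq \expc/2,
\]
so $\diam(f^i(K))\leq\expc$ for every $i\in\Z$, contradicting cw-expansivity.

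The step I expect to be the main obstacle is justifying the uniform choice of the sets $E_i$, that is, verifying that the local connectedness hypothesis yields a uniform modulus $\alpha\mapsto\beta$ valid over all points of $M$. This is exactly where the hypothesis is used: a compact locally connected metric space has open components (hence only finitely many) and each is a Peano continuum, for which uniform local connectedness is classical; taking $\alpha$ smaller than the minimal separation between components then rules out cross-component pathologies and makes the construction of $E_i$ possible for every $i$.
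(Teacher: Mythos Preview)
Your argument is correct and mirrors the paper's proof essentially line by line: both form the product of small connecting continua through the $f^i(p),f^i(q)$, check that this product consists of $\delta$-pseudo-orbits, push it through $\shm_f$ to obtain a nontrivial continuum trapped in a dynamical ball of $p$, and invoke cw-expansivity. One trivial bookkeeping point: the ``short estimate'' you allude to gives $\dist(f(x_i),x_{i+1})\le \delta_1/2+\beta$, so you should also impose $\beta\le\delta_1/2$ alongside $\beta\le\expc/6$ when choosing $\beta$.
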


\begin{proof}
	Let $2\expc$ be a cw-expansivity constant and suppose that $\shm_f(x)$ is an $\expc/2$-shadow of each $\delta$-orbit $x$.
	Take $\gamma>0$, $\gamma<\delta/2,\expc/2$ such that
	\begin{equation}
		\label{ecuGamamDeltaContf}
		\text{if $\dist(a,b)<\gamma$ then $\dist(f(a),f(b))<\delta/2$.}
	\end{equation}
	As $M$ is locally connected there is $c>0$ such that if $\dist(a,b)\leq c$ then there is a continuum of diameter smaller than $\gamma$ containing $a$ and $b$.
	We will show that $c$ is an expansivity constant.

	Suppose that $\dist(f^i(p),f^i(q))\leq c$ for all $i\in\Z$.
	For each $i\in\Z$ take a continuum $C_i\subset M$ containing $f^i(p)$ and $f^i(q)$ with $\diam(C_i)<\gamma$.
	Define the set
	\[
	A=\{x\in \tM:x_i\in C_i\text{ for all } i\in\Z\}.
	\]
	Note that $\orb_f(p),\orb_f(q)\in A$.

	In this paragraph we will show that $A\subset \tM(f,\delta)$.
	If $x\in A$ and $i\in\Z$ then
	$\dist(f^i(p),x_i)<\gamma$ (because both points belong to $C_i$) and $\dist(f^{i+1}(p),f(x_{i+1}))<\delta/2$ (by \eqref{ecuGamamDeltaContf}).
	Since $x_{i+1}, f^{i+1}(p)\in C_{i+1}$ we have $\dist(x_{i+1}, f^{i+1}(p))<\gamma<\delta/2$ and the triangle inequality implies $\dist(f(x_i),x_{i+1})\leq\delta/2$.
	Thus, each $x\in A$ is a $\delta$-orbit and $A\subset \tM(f,\delta)$.

	Since $\shm_f$ is a pseudo-orbit map we conclude $\shm_f(A)$ is a continuum ($A$ is a continuum and $\shm_f$ is continuous) and $p,q\in\shm_f(A)$.

	For $r\in \shm_f(A)$ there is $x\in A$ such that $r=\shm_f(x)$.
	Since $\shm_f(x)$ is an $\expc/2$-shadow of $x$ we have
	$\dist(f^i(r),x_i)\leq\expc/2$ for all $i\in\Z$.
	Since $\dist(x_i,f^i(p))\leq\gamma\leq\expc/2$,
	by the triangle inequality we have $\dist(f^i(r),f^i(p))\leq\expc$ for all $i\in\Z$.
	This means $\shm_f(A)\subset\Gamma_\expc(p)$. Since $2\expc$ is a cw-expansivity constant the set $\Gamma_\expc(p)$ is totally disconnected. Thus, $\shm_f(A)$ is connected (proved above), totally disconnected and contains $p,q$.
	This implies that $\shm_f(A)$ is a singleton and $p=q$. This proves that $f$ is expansive with expansivity constant $c$.
\end{proof}

\begin{exa}
	\label{exaPANoShm}
	In \cite{AAV}*{Theorem 5.5} it is shown that the pseudo-Anosov homeomorphism of the two-dimensional sphere described in \cites{AAV,ArDend} is cw-expansive and has shadowing. It is well known that it is not expansive, see for instance \cite{ArDend}*{Proposition 2.2.2}.
	Therefore, it is an example with the shadowing property but without a shadowing map (it follows from Theorem \ref{thmShmCwExpImpExp} as it is not expansive).
\end{exa}

\subsection{H-shadowing}
\label{secHsh}
In this section we show examples on totally disconnected spaces having a shadowing map.
\begin{df}[Exact hit shadowing \cite{BGOR}*{Definition 3.3}]
	We say that $f$ has \textit{shadowing with exact hit}, or \textit{h-shadowing},
	if for all $\epsilon>0$ there is $\delta>0$ such that
	if $x$ is a $\delta$-orbit then $\dist(f^{i}(x_0),x_i)\leq \epsilon$ for all $i\in\Z$.
\end{df}

\begin{rmk}
	\label{rmkHshImpShm}
	From the definition we see that $f$ has h-shadowing if and only if $\shm_f(x)=x_0$ is a shadowing map.
\end{rmk}

\begin{thm}[\cites{BGO,Kurka}]
	\label{thmBGOh-sh}
	For a homeomorphism $f$ of a compact metric space $M$ the following are equivalent:
	\begin{enumerate}
		\item $f$ has h-shadowing,
		\item $f$ has shadowing and
		is equicontinuous,
		\item $f$ is equicontinuous and $M$ is totally disconnected.
	\end{enumerate}
\end{thm}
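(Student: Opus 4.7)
The plan is to establish the cycle of implications $(1) \Rightarrow (2) \Rightarrow (3) \Rightarrow (1)$. That h-shadowing implies shadowing is immediate by combining Remark \ref{rmkHshImpShm} with Proposition \ref{propShmSH}, so the substance of $(1) \Rightarrow (2)$ reduces to extracting equicontinuity of the family $\{f^n\}_{n\in\Z}$ from h-shadowing.

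For this, fix $\epsilon>0$ and let $\delta$ be as in the h-shadowing property. For any $p,q\in M$ with $\dist(p,q)<\delta$, the sequence given by $x_i=f^i(p)$ for $i\leq -1$, $x_0=q$, $x_i=f^i(q)$ for $i\geq 1$ is a $\delta$-pseudo-orbit whose unique nonzero jump $\dist(x_0,f(x_{-1}))=\dist(q,p)$ is bounded by $\delta$. Applying h-shadowing with $x_0=q$ yields $\dist(f^i(q),f^i(p))\leq\epsilon$ for all $i\leq -1$. Since $f^{-1}$ also has h-shadowing (pseudo-orbits of $f^{-1}$ are pseudo-orbits of $f$ read in reverse time, up to uniform continuity of $f$), the analogous construction applied to $f^{-1}$ supplies the bound for $i\geq 1$, yielding equicontinuity.

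For $(2)\Rightarrow (3)$ we argue by contrapositive. Assume $M$ contains a nondegenerate connected component $C$, pick $p,q\in C$ with $\dist(p,q)=r>0$, and fix $\epsilon<r/3$. Equicontinuity allows replacing $\dist$ by an equivalent $f$-invariant metric, so we may assume $f$ is an isometry. Let $\delta$ be the shadowing constant for $\epsilon$, and use connectedness and compactness of $C$ to produce a $\delta$-chain $p=p_0,p_1,\dots,p_n=q$ inside $C$. The sequence $x_i=f^i(p)$ for $i\leq 0$, $x_i=f^i(p_i)$ for $0\leq i\leq n$, and $x_i=f^i(q)$ for $i\geq n$ has all nonzero jumps equal to $\dist(p_{i+1},p_i)<\delta$ by the isometric property, hence is a $\delta$-pseudo-orbit. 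Any shadow $z$ would satisfy $\dist(f^i(z),f^i(p))\leq\epsilon$ for $i\leq 0$ and $\dist(f^i(z),f^i(q))\leq\epsilon$ for $i\geq n$, which by the isometric property reduce to $\dist(z,p)\leq\epsilon$ and $\dist(z,q)\leq\epsilon$, forcing $r\leq 2\epsilon<r$, a contradiction.

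For $(3)\Rightarrow (1)$ the strategy is, given $\epsilon>0$, to produce a finite clopen partition $\mathcal P$ of mesh less than $\epsilon$ which is \emph{permuted} by $f$. With such $\mathcal P$ in hand, let $\eta>0$ be the minimum distance between distinct cells and take $\delta<\eta$. For any $\delta$-pseudo-orbit $(x_i)$ with $x_0\in P_0\in\mathcal P$, an induction shows $x_i\in f^i(P_0)$ (the image $f(x_i)$ lies in $f^{i+1}(P_0)$ and the jump to $x_{i+1}$ of size less than $\eta$ cannot escape this cell), so that $\dist(f^i(x_0),x_i)<\epsilon$, which is h-shadowing. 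The main obstacle is producing such a partition: a naive clopen partition of small mesh need not be preserved by $f$, and passing to one that is requires using the profinite structure of equicontinuous actions on totally disconnected compact metric spaces --- each orbit closure is a minimal equicontinuous system conjugate to a group translation on a profinite group, which furnishes the needed filtration by $f$-invariant clopen partitions.
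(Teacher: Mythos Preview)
The paper does not prove this theorem; it is quoted from \cites{BGO,Kurka} with no argument supplied, so there is nothing to compare your approach against on the paper's side.

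Your implications $(1)\Rightarrow(2)$ and $(2)\Rightarrow(3)$ are correct. In $(3)\Rightarrow(1)$ the reduction to an $f$-permuted clopen partition of small mesh is exactly right, and your induction from there is clean. The gap is in producing that partition: knowing that \emph{each orbit closure} is a translation on a profinite group gives you $f$-invariant clopen partitions \emph{within} each minimal piece, but orbit closures can accumulate on one another, so it is not clear how to assemble these local filtrations into a single global clopen partition permuted by $f$. A direct route that avoids this: pass to an equivalent $f$-invariant metric $d'$ (available by equicontinuity), take any clopen partition $\mathcal Q$ of $d'$-mesh $<\epsilon$ (using total disconnectedness), let $r>0$ be smaller than the minimum $d'$-distance between distinct cells of $\mathcal Q$, and let $\mathcal P$ be the partition of $M$ into $d'$-$r$-chain-connected components. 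Each cell of $\mathcal P$ lies in a cell of $\mathcal Q$ (so $\operatorname{mesh}\mathcal P<\epsilon$), the cells are open hence clopen, and since $f$ is a $d'$-isometry it permutes them. Your argument then runs verbatim.
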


%

\begin{ex}
	Odometers are the examples of minimal and equicontinuous homeomorphisms of the Cantor set, see \cite{Kurka}*{Proposition 4.4}.
	Therefore, odometers have a shadowing map.
\end{ex}

\subsection{Shadowing bracket}
\label{secShBra}
In this section we continue \S\ref{secIndBracket} about the bracket induced by a shadowing map.
\begin{df}
	A \textit{shadowing bracket} is a continuous map $[\cdot,\cdot]\colon \Delta_\delta\to M$
	such that:
	\begin{enumerate}
		\item $[p,p]=p$ for all $p\in M$,
		\item for all $\epsilon>0$ there is $\gamma>0$ satisfying
		$\dist(p,q)\leq\gamma \Rightarrow
		[p,q]\in W^s_\epsilon(q)\cap W^u_\epsilon(p).$
	\end{enumerate}
\end{df}
Recall that the induced bracket is
$[p,q]=\shm_f(\con_f(p,q)).$
\begin{prop}
	\label{propShmShb}
	The bracket induced by a shadowing map is a shadowing bracket.
\end{prop}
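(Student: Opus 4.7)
The first clause $[p,p]=p$ is already the content of the previous proposition on induced brackets, since $\con_f(p,p)=\orb_f(p)$ and $\shm_f\circ\orb_f=\id$. For the hyperbolicity clause the plan is to exploit the single-jump structure of the connecting pseudo-orbit. Explicitly, $\con_f(p,q)$ coincides with the real $f$-orbit of $p$ for indices $i<0$ and with the real $f$-orbit of $q$ for indices $i\geq 0$, and all the pseudo-orbit inequalities $\dist(x_{i+1},f(x_i))\leq\delta$ are satisfied with equality zero except at $i=-1$, where $\dist(x_0,f(x_{-1}))=\dist(q,f(f^{-1}(p)))=\dist(p,q)$. In particular $\dist(p,q)\leq\gamma$ implies $\con_f(p,q)\in\tM(f,\gamma)$.

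Given $\epsilon>0$, the plan is to use Proposition \ref{propShmSH} to pick $\gamma_1\in(0,\delta]$ such that every $\gamma_1$-pseudo-orbit is $(\epsilon/2)$-shadowed by its image under $\shm_f$, and then set $\gamma:=\min(\gamma_1,\epsilon/2)$. Writing $r:=[p,q]=\shm_f(\con_f(p,q))$, the shadowing inequality reads $\dist(f^i(r),\con_f(p,q)_i)\leq\epsilon/2$ for all $i\in\Z$. For $i\geq 0$ this translates to $\dist(f^i(r),f^i(q))\leq\epsilon/2\leq\epsilon$, which gives $r\in W^s_\epsilon(q)$. For $i<0$ it reads $\dist(f^i(r),f^i(p))\leq\epsilon/2\leq\epsilon$, which takes care of all indices on the unstable side except $i=0$; at $i=0$ the connecting pseudo-orbit has ``switched sides'' and directly provides only $\dist(r,q)\leq\epsilon/2$. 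The triangle inequality then closes the gap:
\[
\dist(r,p)\leq \dist(r,q)+\dist(q,p)\leq \epsilon/2+\gamma\leq\epsilon,
\]
so $r\in W^u_\epsilon(p)$ as well.

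I do not foresee a serious obstacle: the only step that is not entirely mechanical is the $i=0$ case for membership in $W^u_\epsilon(p)$, because the connecting pseudo-orbit carries the unstable data only for strictly negative indices; the choice $\gamma\leq\epsilon/2$ is precisely what is needed to absorb the single jump into the $\epsilon$-tolerance at time zero.
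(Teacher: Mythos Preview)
Your proof is correct and follows essentially the same route as the paper: use the shadowing map to $\epsilon/2$-shadow $\con_f(p,q)$, read off the stable and unstable inequalities from the structure of the connecting pseudo-orbit, and handle separately the single awkward index $i=0$ on the unstable side. The only minor difference is in that last step: you close the gap with the triangle inequality through $q$ and the extra condition $\gamma\leq\epsilon/2$, whereas the paper instead shadows to precision $\epsilon'$ chosen by uniform continuity of $f$ so that $\dist(f^{-1}(r),f^{-1}(p))\leq\epsilon'$ (the $i=-1$ inequality) yields $\dist(r,p)\leq\epsilon$ after applying $f$. Both tricks are elementary and interchangeable.
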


\begin{proof}
	Given $\epsilon>0$ take $0<\epsilon'\leq\epsilon$ such that
	$\dist(a,b)\leq\epsilon'$, $a,b\in M$, implies $\dist(f(a),f(b))\leq\epsilon$.
	Since $f$ has shadowing there is $\gamma>0$ such that every $\gamma$-orbit $x$ is $\epsilon'$-shadowed by $\shm_f(x)$.
	For $p,q\in M$ with $\dist(p,q)\leq\gamma$ we have that $x=\con_f(p,q)$ is a $\gamma$-orbit. Thus, it is $\epsilon'$-shadowed by $r=[p,q]$.
	That is, $\dist(f^n(r),f^n(q))\leq\epsilon'\leq\epsilon$ for all $n\geq 0$.
	Also, $\dist(f^{-n}(r),f^{-n}(p))\leq\epsilon'\leq\epsilon$ for all $n\geq 1$.
	For $n=0$ we have $\dist(r,p)\leq\epsilon$ from the way we choose $\epsilon'$.
	This implies
	$r=[p,q]\in W^s_\epsilon(q)\cap W^u_\epsilon(p)$.
\end{proof}

\subsection{Cw-expansivity and shadowing brackets} In what follows we show that the pseudo-Anosov homeomorphism of the two-sphere does not admit a shadowing bracket.

A continuum is $\epsilon$-\textit{stable} if $\diam(f^n(C))\leq\epsilon$ for all
$n\geq 0$. It is $\epsilon$-\textit{unstable} if $\diam(f^{-n}(C))\leq\epsilon$ for all $n\geq 0$.
For $n\geq 1$ we say that $f$ is \textit{cw$_N$-expansive} \cite{ArDend} if there is $\expc>0$ such that if
$C_s\subset M$ is an $\expc$-stable continuum and
$C_u\subset M$ is $\expc$-unstable then $C_s\cap C_u$ contains at most $N$ points.
It is stronger than cw-expansivity.

The next proof is similar to that of Theorem \ref{thmShmCwExpImpExp}, and in both cases the topological connection is a key property. We learnt this kind of argument from J. Lewowicz, see for instance \cite{Lew89}*{Lemma 2.3}.

\begin{thm}
	\label{thmcwexpshbraImpcw1exp}
	If $f$ is cw-expansive and has a shadowing bracket then $f$ is cw$_1$-expansive.
\end{thm}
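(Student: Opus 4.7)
The plan is to follow the strategy of Theorem \ref{thmShmCwExpImpExp}: assume toward contradiction that there are two distinct points in a stable/unstable intersection, build a continuum through both, and then use cw-expansivity to deduce that this continuum is a singleton. The novelty is that the continuum is now constructed from the shadowing bracket rather than from a shadowing map.

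First I would fix a cw-expansivity constant $2\expc$, pick any $\epsilon\in(0,\expc/2)$, and use the shadowing-bracket property to extract $\gamma\in(0,\delta]$ such that $\dist(a,b)\le\gamma$ implies $[a,b]\in W^s_\epsilon(b)\cap W^u_\epsilon(a)$. The candidate cw$_1$-expansivity constant is $\expc':=\gamma/2$. Suppose toward contradiction there exist an $\expc'$-stable continuum $C_s$ and an $\expc'$-unstable continuum $C_u$ sharing two distinct points $p,q$. The key object is
$$B:=\{[a,b]:a\in C_u,\ b\in C_s\}.$$
Since $\diam(C_u),\diam(C_s)\le\expc'$ and $p\in C_s\cap C_u$, every pair $(a,b)\in C_u\times C_s$ satisfies $\dist(a,b)\le\diam(C_u)+\diam(C_s)\le 2\expc'=\gamma$, so $B$ is well-defined. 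Continuity of the bracket makes $B$ the continuous image of the continuum $C_u\times C_s$, hence a continuum; the axiom $[r,r]=r$ places both $p$ and $q$ in $B$.

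The next step is to bound $\diam(f^n(B))$. For $n\ge 0$, using $[a,b]\in W^s_\epsilon(b)$ and the triangle inequality through $f^n(b)$ and $f^n(b')$,
$$\dist(f^n([a,b]),f^n([a',b']))\le \epsilon+\diam(f^n(C_s))+\epsilon\le 2\epsilon+\expc'\le 2\expc,$$
where the middle inequality uses $\expc'$-stability of $C_s$ and the final one holds by the initial choice of $\epsilon$ and $\expc'$. The symmetric estimate for $n<0$ comes from $[a,b]\in W^u_\epsilon(a)$ and $\expc'$-unstability of $C_u$. Therefore every iterate of $B$ has diameter at most $2\expc$, so cw-expansivity forces $B$ to be a singleton, contradicting $p\neq q\in B$.

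The substance of the proof is the observation that the bracket produces a two-parameter family of points simultaneously close to $C_s$ under forward iteration and to $C_u$ under backward iteration, which is exactly what one needs in order to pack $C_s\cap C_u$ into a cw-expansivity bound. The main technical nuisance — rather than a true obstacle — is keeping track of constants so that $\gamma$ fits inside the bracket's domain $\Delta_\delta$ while simultaneously $2\epsilon+\expc'$ remains within the cw-expansivity threshold $2\expc$; this is precisely the kind of bookkeeping already present in the proof of Theorem \ref{thmShmCwExpImpExp}, and the analogy with Lewowicz's continuum-based arguments (cited in the paragraph preceding the statement) suggests no hidden difficulty.
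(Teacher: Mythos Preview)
Your argument is correct and is essentially the paper's own proof: both form the continuum $[C_u,C_s]$, use the bracket property plus the stable/unstable bounds on $C_s,C_u$ to cap the diameter of every iterate, and then invoke cw-expansivity to collapse it to a singleton containing $C_s\cap C_u$. The one loose thread is your inequality $2\epsilon+\expc'\le 2\expc$, which does not follow from the choices you stated (you need in addition $\gamma\le 2\expc$, say); the paper sidesteps this by fixing $\epsilon=\expc/3$ from the outset and taking the stable/unstable diameter bound to be $\expc/3$ as well, so the three-term triangle estimate lands exactly at $\expc$.
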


\begin{proof}
	Suppose that $2\expc>0$ is a cw-expansivity constant.
	Suppose that the bracket is defined in $\Delta_\delta$.
	Taking a smaller $\delta$ if needed we can assume that
	$[p,q]\in W^s_{\expc/3}(q)\cap W^u_{\expc/3}(p)$ whenever $\dist(p,q)\leq \delta$.

	%

	Suppose that $C_s$ and $C_u$ are $\frac{\expc}{3}$-stable/unstable continua respectively.
	Assume that $\diam(C_s\cup C_u)\leq\delta$.
	In this way we can define
	\[
	A=[C_u,C_s]=\{[p,q]:p\in C_u, q\in C_s\}.
	\]
	Note that $A$ is connected as it is the image of a connected set by a continuous function.
	If $r_1,r_2\in A$ we can write
	$r_j=[p_j,q_j]$, $p_j\in C_u$ and $q_j\in C_s$ for $j=1,2$.
	In this way we have for $i\geq 0$
	\[
	\begin{array}{rl}
		\dist(f^i(r_1),f^i(r_2))& \leq
		\dist(f^i(r_1),f^i(q_1))+
		\dist(f^i(q_1),f^i(q_2))+
		\dist(f^i(q_2),f^i(r_2))\\
		&\leq \expc/3+\expc/3+\expc/3=\expc.
	\end{array}
	\]
	Analogously, for $i<0$ we have $\dist(f^i(r_1),f^i(r_2))\leq\expc$.
	Thus, if we fix some $r\in A$ we conclude $A\subset\Gamma_{2\expc}(r)$.
	Since $2\expc$ is a cw-expansivity constant we have that $\Gamma_{2\expc}(r)$ and $A$ are totally disconnected. As $A$ is connected, it is a singleton.

	Finally notice that for each $p\in C_u\cap C_s$ we have $p=[p,p]\in A$, \textit{i.e.}, $C_u\cap C_s$ is contained in the singleton $A$ and it is a singleton or empty.
	Let us show that $\gamma=\min\{\delta/2,\expc/3\}$ is a cw$_1$-expansivity constant.
	Suppose that $C_s$ and $C_u$ are $\gamma$-stable/unstable continua respectively.
	If $C_s$ intersects $C_u$ then the diameter of their union is at most $\delta$, so we can apply our previous results to conclude that their intersection is a singleton.
\end{proof}


\begin{cor}
	If $M$ is a compact surface,
	$f$ is cw-expansive and has a shadowing bracket then $f$
	is conjugate to a linear Anosov diffeomorphism of the two-torus.
\end{cor}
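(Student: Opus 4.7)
The strategy is to chain together the cw$_1$-expansivity produced by Theorem \ref{thmcwexpshbraImpcw1exp} with the shadowing property implicit in the bracket, in order to land in the well-studied class of topologically hyperbolic surface homeomorphisms, and then quote a known classification.

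First, apply Theorem \ref{thmcwexpshbraImpcw1exp} to the hypotheses: $f$ is cw-expansive and carries a shadowing bracket, hence $f$ is cw$_1$-expansive. On a compact surface, cw$_1$-expansivity in fact upgrades to ordinary expansivity. The mechanism is the same connectedness trick used in the proofs of Theorems \ref{thmShmCwExpImpExp} and \ref{thmcwexpshbraImpcw1exp}: on a surface, any nondegenerate dynamical ball $\Gamma_\expc(p)$ whose stable and unstable cross-sections are nontrivial would produce an $\expc$-stable and an $\expc$-unstable continuum meeting in more than one point, contradicting cw$_1$-expansivity; this is the type of argument attributed to Lewowicz in the paper and carried out in detail in the reference \cite{ArDend}.

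Next, recover the shadowing property from the shadowing bracket. This is the classical Bowen technique revisited in \S\ref{secAlmShInvShm}: on an expansive system, a continuous bracket with the hyperbolic contraction of local stable and unstable sets (which a shadowing bracket supplies by definition) can be iterated along any pseudo-orbit to produce a genuine shadow. Hence $f$ is expansive and has the shadowing property, i.e. $f$ is topologically hyperbolic.

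Finally, invoke the classification of topologically hyperbolic homeomorphisms of closed surfaces. Hiraide's theorem (together with Lewowicz) forces the surface to be $\T^2$ and $f$ to be conjugate to an Anosov diffeomorphism, and Franks–Manning's rigidity then upgrades this to a topological conjugacy with a linear Anosov diffeomorphism of $\T^2$. The main obstacle in this outline is the bookkeeping of the second paragraph: the jump from cw$_1$-expansivity to expansivity on a surface is not completely trivial and really does rely on the two-dimensional topology through the fact that local stable/unstable continua separate the surface; everything else is either Theorem \ref{thmcwexpshbraImpcw1exp}, Bowen's classical shadowing theorem, or a citation to the surface classification.
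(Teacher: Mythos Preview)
Your outline is correct and follows essentially the same route as the paper: apply Theorem \ref{thmcwexpshbraImpcw1exp} to get cw$_1$-expansivity, quote \cite{ArDend} for the upgrade to expansivity on surfaces, use the bracket to obtain local product structure and hence shadowing via Bowen, and finish with the Lewowicz--Hiraide classification.

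One phrasing to tighten: a shadowing bracket does \emph{not} supply ``hyperbolic contraction of local stable and unstable sets'' by definition; it only supplies $[p,q]\in W^s_\epsilon(q)\cap W^u_\epsilon(p)$, i.e.\ canonical coordinates. The uniform contraction of local stable and unstable sets comes from the expansivity you have already established, and it is this combination (expansivity plus canonical coordinates) that feeds into Bowen's argument. The paper states this step tersely as ``the shadowing bracket implies the local product structure and the shadowing property,'' implicitly using the expansivity just obtained.
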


\begin{proof}
	In \cite{ArDend}*{Theorem 6.8.5}
	it is shown that every cw$_1$-expansive homeomorphism of a compact surface is expansive. From Theorem
	\ref{thmcwexpshbraImpcw1exp} we conclude that $f$ is expansive.
	The shadowing bracket implies the local product structure and the shadowing property.
	Applying the classification Theorem of expansive homeomorphisms of surfaces \cites{Lew89,Hi90} the proof ends.
\end{proof}

\begin{cor}
	The pseudo-Anosov homeomorphisms of the two-sphere of Example \ref{exaPANoShm}
	has neither a shadowing bracket nor a shadowing map.
\end{cor}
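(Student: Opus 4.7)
The plan is to reduce both assertions to results already established in this section, using only the three facts recalled in Example \ref{exaPANoShm}: the pseudo-Anosov homeomorphism $f\colon S^2\to S^2$ is cw-expansive, has the shadowing property, and is \emph{not} expansive. Note that $S^2$ is a compact, locally connected surface, so the hypotheses of the general results below are all satisfied.

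First I would rule out a shadowing map. Since $S^2$ is locally connected and $f$ is cw-expansive, Theorem \ref{thmShmCwExpImpExp} says that the existence of a shadowing map would force $f$ to be expansive. This contradicts non-expansivity, so no shadowing map exists. (This just repeats the observation already recorded inside Example \ref{exaPANoShm}, and is included here for completeness of the corollary's statement.)

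Next I would rule out a shadowing bracket by contradiction. Suppose $f$ admitted one. Because $f$ is cw-expansive, Theorem \ref{thmcwexpshbraImpcw1exp} would promote it to cw$_1$-expansive. Then the previous corollary of this subsection, applied on the compact surface $S^2$, would yield that $f$ is conjugate to a linear Anosov diffeomorphism of $\mathbb{T}^2$. This is absurd since $S^2$ and $\mathbb{T}^2$ are not homeomorphic. Alternatively, and more directly, one may bypass the previous corollary: cw$_1$-expansivity on a compact surface already implies expansivity by \cite{ArDend}*{Theorem 6.8.5}, contradicting the non-expansivity of $f$.

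I do not anticipate any real obstacle: the corollary is a packaging of Theorem \ref{thmShmCwExpImpExp}, Theorem \ref{thmcwexpshbraImpcw1exp}, and the surface classification already invoked. Its purpose is simply to exhibit a concrete example of a homeomorphism with shadowing but without a shadowing map, thereby justifying the bottom non-implication of Table \ref{tablaShm2}.
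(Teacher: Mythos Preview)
Your argument is correct, but it differs from the paper's in both halves. The paper first observes directly that this pseudo-Anosov example is not cw$_1$-expansive (this is immediate from the prong singularities, where local stable and unstable continua meet in more than one point), and then applies Theorem \ref{thmcwexpshbraImpcw1exp} in contrapositive to rule out a shadowing bracket. Having done that, it disposes of the shadowing map via Proposition \ref{propShmShb}: a shadowing map would induce a shadowing bracket, which was just excluded. So the paper's chain is ``not cw$_1$-expansive $\Rightarrow$ no shadowing bracket $\Rightarrow$ no shadowing map''. Your chain instead runs through non-expansivity twice: you invoke Theorem \ref{thmShmCwExpImpExp} for the shadowing map (repeating Example \ref{exaPANoShm}), and for the bracket you pass through cw$_1$-expansivity and then use the surface result \cite{ArDend}*{Theorem 6.8.5} (or the previous corollary) to reach expansivity and a contradiction. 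Your route is a bit heavier, importing the surface classification where the paper only needs the elementary observation that the example fails cw$_1$-expansivity; the paper's route has the virtue of showing that Proposition \ref{propShmShb} alone links the two conclusions.
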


\begin{proof}
	It is easy to see that this example is not cw$_1$-expansive.
	By Theorem \ref{thmcwexpshbraImpcw1exp} it admits no
	shadowing bracket.
	By Proposition \ref{propShmShb} it also admits no shadowing map.
\end{proof}

%
%
%
%

\section{L-shadowing maps}
\label{L-shadowing maps}
In this brief section we present the L-shadowing property, define L-shadowing maps, discuss some examples and prove elementary properties.
\subsection{Preliminaries on the L-shadowing property}
The L-shadowing property captures some features of expansive homeomorphisms with shadowing.
\begin{df} We say that $f$ has the \textit{L-shadowing property}
	if for all $\epsilon>0$ there is $\delta>0$ such that
	for every $\delta$-orbit $x$ that is also a two-sided limit pseudo-orbit (\textit{i.e.} $\dist(f(x_i),x_{i+1})\to 0$ as $|i|\to\infty$), there is $p\in M$ that $\epsilon$-shadows and two-sided limit shadows $x$ (\textit{i.e.} $\dist(f^i(p),x_i)\to 0$ as $|i|\to\infty$).
\end{df}

Examples of homeomorphisms with L-shadowing are structurally stable diffeomorphisms and some pseudo-Anosov maps of the two-sphere \cite{ACCV20}*{Theorem A}.
In \cite{ACCV20}*{Theorem D} it is given a homeomorphism with L-shadowing, topologically mixing and without periodic points.
We cite some known dynamical properties.

\begin{thm}[\cite{ACCV20}*{Proposition 4.1}]
	If $f$ has shadowing and its restriction to $\Omega(f)$ is expansive then $f$ has L-shadowing.
\end{thm}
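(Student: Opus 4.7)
The plan is as follows. Fix $\epsilon>0$ and let $\expc>0$ be an expansivity constant for $f|_{\Omega(f)}$. Set $\alpha=\min(\epsilon,\expc)$ and let $\delta\in(0,\alpha)$ be obtained from the shadowing property for accuracy $\alpha$. Given a $\delta$-orbit $x$ that is also a two-sided limit pseudo-orbit, choose an $\alpha$-shadow $p$ by shadowing. Since $\alpha\leq\epsilon$, $p$ automatically $\epsilon$-shadows $x$, so the only task is to prove that $\dist(f^n(p),x_n)\to 0$ as $|n|\to\infty$, which I would do by contradiction.

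Suppose, on the contrary, that there exist $\eta>0$ and $n_k\to+\infty$ (the $n_k\to-\infty$ case is identical) with $\dist(f^{n_k}(p),x_{n_k})\geq\eta$. Passing to a subsequence, compactness of $M$ yields $f^{n_k}(p)\to a$ and $x_{n_k}\to b$, with $\dist(a,b)\geq\eta$. The first sub-goal is to show that $\dist(f^m(a),f^m(b))\leq\alpha$ for every $m\in\Z$. This follows by passing to the limit in
\[
\dist(f^{n_k+m}(p),f^m(x_{n_k}))\leq \dist(f^{n_k+m}(p),x_{n_k+m})+\dist(x_{n_k+m},f^m(x_{n_k})),
\]
using the shadowing bound $\alpha$ on the first summand and, on the second, a telescoping argument together with the two-sided limit hypothesis $\dist(f(x_i),x_{i+1})\to 0$ (as $i\geq n_k\to\infty$) and the uniform continuity of each $f^j$, $|j|\leq|m|$.

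The second sub-goal is to show $a,b\in\Omega(f)$. The inclusion $a\in\Omega(f)$ is immediate from $a\in\omega(p)$. Proving $b\in\Omega(f)$ is the main obstacle of the proof, since it does not follow formally from $a\in\Omega(f)$. I plan to handle it by a gluing-and-shadowing argument: given an open neighbourhood $V$ of $b$, take $\rho>0$ with $B(b,\rho)\subset V$, let $\delta_V>0$ be the shadowing parameter for accuracy $\rho/2$, and select indices $n_k<n_{k'}$ with $x_{n_k},x_{n_{k'}}\in B(b,\rho/2)$ and $\dist(f(x_i),x_{i+1})<\delta_V$ for all $i\geq n_k$. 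Define $y\in\tM$ by setting $y_i=f^{i-n_k}(x_{n_k})$ for $i\leq n_k$, $y_i=x_i$ for $n_k\leq i\leq n_{k'}$, and $y_i=f^{i-n_{k'}}(x_{n_{k'}})$ for $i\geq n_{k'}$. Every jump of $y$ is either $0$ (on the real-orbit pieces and at the matching indices $n_k-1$ and $n_{k'}$) or strictly less than $\delta_V$ (on the middle segment). A $\rho/2$-shadow $q$ of $y$ then satisfies $f^{n_k}(q),\,f^{n_{k'}}(q)\in B(b,\rho)\subset V$, which witnesses that $b$ is non-wandering.

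With $a,b\in\Omega(f)$, $a\neq b$, and $\dist(f^m(a),f^m(b))\leq\alpha\leq\expc$ for every $m\in\Z$, expansivity of $f|_{\Omega(f)}$ is contradicted, closing the argument. As anticipated, the technical heart of the plan is the verification that the limit point $b$ of the pseudo-orbit belongs to $\Omega(f)$; the concatenation of real-orbit extensions through $x_{n_k}$ and $x_{n_{k'}}$ with the middle piece of $x$, shadowed by a real orbit, is the key device that turns the ``vanishing jumps at infinity'' into actual recurrence.
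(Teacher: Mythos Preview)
The paper does not supply its own proof of this statement; it merely quotes it with a citation to \cite{ACCV20}*{Proposition 4.1}. So there is nothing in the present paper to compare your argument against.

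That said, your proposal is a correct and complete proof. The two sub-goals are handled cleanly: the bound $\dist(f^m(a),f^m(b))\leq\alpha$ follows exactly as you indicate, and the argument that $b\in\Omega(f)$ via the glued pseudo-orbit $y$ is sound (the jumps of $y$ are all either $0$ or $<\delta_V$, and the $\rho/2$-shadow $q$ yields a genuine return $f^{n_{k'}-n_k}(V)\cap V\neq\emptyset$). With $a,b\in\Omega(f)$ and $\dist(f^m(a),f^m(b))\leq\alpha\leq\expc$ for all $m\in\Z$, expansivity of $f|_{\Omega(f)}$ forces $a=b$, contradicting $\dist(a,b)\geq\eta>0$.
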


\begin{thm}
	If $f$ has L-shadowing then:
	\begin{enumerate}
		\item \cite{CaCo19}*{Proposition 2} $f$ has shadowing and $R(f)$ has finitely many chain recurrent classes,
		\item \cite{ACCV20}*{Theorem B} each chain recurrent class is either expansive or contains arbitrarily small topological semi-horseshoes,
		\item \cite{ACCV20}*{Theorem F} if $C$ is a chain recurrent class of $f$ then there are $n\geq 1$, $C_1.\dots,C_n\subset C$, closed and  disjoint sets such that $C=\cup_{j=1}^n C_j$, $f(C_j)=C_{j+1\mod n}$ and $f^n$ restricted to each $C_i$ is topologically mixing.
	\end{enumerate}
\end{thm}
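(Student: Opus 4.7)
The three items are stated with explicit citations to \cite{CaCo19} and \cite{ACCV20}, so the cleanest approach is to invoke those references directly. To make the plan more self-contained, I would outline the strategy used in each source rather than reprove them from scratch.

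For \emph{(1)}, the proof that L-shadowing implies shadowing proceeds by turning an arbitrary $\delta$-orbit into a two-sided limit pseudo-orbit. Given a $\delta$-orbit $x$, one picks a periodic point (or, failing that, a point whose forward and backward orbits have limit behavior) and concatenates $x$ on a finite window with periodic tails using chains of $\delta$-jumps; the resulting sequence is a $\delta'$-pseudo-orbit with $\delta'$ close to $\delta$ and with $\dist(f(y_i),y_{i+1})\to 0$ as $|i|\to\infty$. Applying L-shadowing to this sequence and passing to a limit over longer and longer windows yields the desired $\epsilon$-shadow of $x$. For the finiteness of chain recurrent classes I would use that, by L-shadowing, any chain recurrent class admits a uniform $\delta_0$ such that two points in different classes cannot both be approximated by the same limit shadow; a covering argument on the compact space $M$ then bounds the number of classes.

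For \emph{(2)}, the argument is by contrapositive: suppose a chain recurrent class $C$ is not expansive, so there exist distinct points $p,q\in C$ whose orbits remain within some small $\eta$ of each other for all time. Since $C$ is chain recurrent, each point has long $\delta$-chains back to itself, and one can splice finite segments of the orbits of $p$ and $q$ along these chains in arbitrary binary patterns. Each such splice is a two-sided limit pseudo-orbit, so L-shadowing produces an actual orbit realizing the pattern. The resulting Cantor-set coding gives a topological semi-horseshoe of diameter controlled by $\eta$, which can be made arbitrarily small.

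For \emph{(3)}, the decomposition follows the classical Smale-type strategy: on the chain recurrent class $C$ define an equivalence on a dense set by declaring $x\sim y$ when $x$ and $y$ are connected by closed $\delta$-chains of lengths in a fixed period $n$ for all sufficiently small $\delta$. Shadowing (from part (1)) upgrades these chain relations to actual orbit relations, and compactness of $C$ forces finitely many classes $C_1,\dots,C_n$ permuted cyclically by $f$. Topological mixing of $f^n$ on each $C_i$ follows from combining shadowing with the density of periodic orbits inside each piece. The step I expect to be hardest is \emph{(2)}: the construction of arbitrarily small semi-horseshoes requires carefully controlled splicing so that the pseudo-orbits stay two-sided limit pseudo-orbits while the codings remain faithful, and this is where the bulk of the work in \cite{ACCV20}*{Theorem B} lies.
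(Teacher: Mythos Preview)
The paper gives no proof of this theorem: it is stated purely as a compilation of results from \cite{CaCo19} and \cite{ACCV20}, with the citations embedded in the statement itself and no proof environment following. Your opening sentence---that the cleanest approach is to invoke those references directly---is therefore exactly what the paper does, and nothing more is required.

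Your additional sketches go beyond what the paper offers. They are reasonable heuristics for how those cited proofs proceed, though a couple of details are loose: in (1), the actual reduction in \cite{CaCo19} does not need to locate periodic points to build the tails---one simply extends a finite pseudo-orbit segment by genuine orbit pieces on each side and takes a limit (much as in the proof of Proposition~\ref{propLshmImplicaShm} here); and your argument for finiteness of chain classes is not really an argument yet. But since the paper itself does not attempt any of this, these are side remarks rather than discrepancies with the paper's proof.
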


For $\epsilon>0$ given let $V^*_\epsilon(p)=W^*_\epsilon(p)\cap W^*(p)$ for $*=s,u$.

\begin{thm}[\cite{ACCV20}*{Theorem E}]
	\label{thmE-ACCV}
	A homeomorphism $f$ has L-shadowing if and only if it has shadowing and for all $\epsilon>0$ there is $\delta>0$ such that $\dist(p,q)<\delta$ implies $V^s_\epsilon(p)\cap V^u_\epsilon(q)\neq\emptyset$.
\end{thm}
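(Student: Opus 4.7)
The plan is to argue both implications separately.

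For the forward implication, the shadowing half is part of the L-shadowing definition, so only the local product condition requires work. Given $\epsilon>0$, I would take the $\delta$ from L-shadowing at scale $\epsilon/2$ and shrink it further so $\delta<\epsilon/2$. For any $p,q\in M$ with $\dist(p,q)<\delta$, define the one-jump pseudo-orbit
\[
x_i=\begin{cases}f^i(q)& i<0,\\ f^i(p)& i\geq 0,\end{cases}
\]
whose only nontrivial jump is at $i=-1$, with $\dist(f(x_{-1}),x_0)=\dist(q,p)<\delta$. Having jumps of bounded support, $x$ is both a $\delta$-orbit and (trivially) a two-sided limit pseudo-orbit. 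L-shadowing yields $r\in M$ with $\dist(f^i(r),x_i)\leq\epsilon/2$ for every $i$ and $\dist(f^i(r),x_i)\to 0$ as $|i|\to\infty$. The forward half gives $r\in V^s_{\epsilon/2}(p)\subset V^s_\epsilon(p)$; for the backward half, $\dist(f^{-n}(r),f^{-n}(q))\leq\epsilon/2$ for $n\geq 1$, while at $n=0$ the triangle inequality yields $\dist(r,q)\leq\epsilon/2+\delta\leq\epsilon$, so $r\in V^u_\epsilon(q)$.

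For the converse, assume shadowing and the local product condition. Given $\epsilon>0$, pick $\epsilon'$ much smaller than $\epsilon$ and choose $\delta$ small enough that every $\delta$-orbit is $\epsilon'$-shadowed and that any two points at distance below $3\epsilon'$ admit a common point in their local $\epsilon'$-stable and $\epsilon'$-unstable sets. Given a $\delta$-orbit $x$ that is also a two-sided limit pseudo-orbit, the strategy is to build a \emph{forward limit shadow} $p^+$ (a point with $\dist(f^i(p^+),x_i)\leq\epsilon'$ for every $i$ and $\dist(f^i(p^+),x_i)\to 0$ as $i\to+\infty$) and, symmetrically, a backward limit shadow $p^-$, and then combine them via local product. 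To produce $p^+$, observe that for any $\eta>0$ the tail $\shift^N(x)|_{i\geq 0}$ is an arbitrarily good pseudo-orbit once $N$ is large; shadowing this tail and pulling back by $f^{-N}$ produces a candidate that forward $\eta$-shadows $x$, and a diagonal extraction as $\eta\downarrow 0$, using compactness of $M$, yields $p^+$. An analogous construction gives $p^-$. Since $p^\pm$ are each $\epsilon'$-close to any global shadow of $x$, they are mutually $3\epsilon'$-close, and the local product condition supplies $r\in V^s_{\epsilon'}(p^+)\cap V^u_{\epsilon'}(p^-)$; a final triangle inequality shows this $r$ both $\epsilon$-shadows and two-sided limit shadows $x$.

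The main obstacle is the converse. The delicate step is ensuring that $p^+$ globally $\epsilon'$-shadows $x$, not merely shadows its tail: the diagonal extraction automatically controls the behaviour for large $i$, but transferring control back to the whole sequence requires interlocking the shadowing constants so the pull-backs by $f^{-N}$ of tail shadows satisfy a uniform $\epsilon'$-bound against $x$ at \emph{every} index. A second delicate point is propagating the $\epsilon'\ll\epsilon$ budget through the triangle inequalities used when combining $p^+$ and $p^-$ via local product, so that the synthesised point $r$ still meets the global $\epsilon$-bound, with the tight spot lying near $i=0$, where the closeness of $r$ to both $p^\pm$ is simultaneously invoked.
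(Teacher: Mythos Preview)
This theorem is not proved in the present paper; it is simply quoted from \cite{ACCV20}*{Theorem~E}. So there is no in-paper argument to compare against, and your proposal has to stand on its own.

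\medskip
\textbf{Forward direction.} Your claim that ``the shadowing half is part of the L-shadowing definition'' is a slip: as defined here, L-shadowing only concerns $\delta$-orbits that are \emph{also} two-sided limit pseudo-orbits, so an arbitrary $\delta$-orbit is not directly covered. The implication L-shadowing $\Rightarrow$ shadowing is true (it is cited here from \cite{CaCo19}), but it requires the truncation-and-limit approximation used in the proof of Proposition~\ref{propLshmImplicaShm}. Your one-jump pseudo-orbit argument for the $V^{s}_\epsilon\cap V^{u}_\epsilon$ condition is correct.

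\medskip
\textbf{Converse.} The gap you flag is real and your suggested remedy does not close it. ``Shadowing the tail $\shift^{N}(x)|_{i\geq 0}$ at precision $\eta$ and pulling back by $f^{-N}$'' produces a point that $\eta$-tracks $x_i$ for $i\geq N$ but carries \emph{no information at all} about $x_i$ for $i<N$; a subsequential limit of such points therefore has no reason to $\epsilon'$-shadow $x$ globally, and no choice of ``interlocked shadowing constants'' changes that, because the tail shadow simply never sees the pre-tail part of $x$. (If this worked, shadowing alone would imply one-sided s-limit shadowing, which is exactly the nontrivial content here.) What makes the argument go through is using the local product hypothesis \emph{inside} the construction of $p^{+}$, not only at the final gluing: starting from a global $\epsilon'$-shadow $p_{0}$ of $x$, one successively corrects it at times $N_{1}<N_{2}<\cdots$ by choosing
\[
f^{N_{k}}(p_{k})\in V^{s}_{\epsilon_{k}}\bigl(f^{N_{k}}(r_{k})\bigr)\cap V^{u}_{\epsilon_{k}}\bigl(f^{N_{k}}(p_{k-1})\bigr),
\]
where $r_{k}$ is a $1/k$-shadow of the tail from $N_{k}$. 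The $V^{s}$ half improves the forward fit, while the $V^{u}$ half preserves the global bound already obtained; with summable $\epsilon_{k}$ the sequence $(p_{k})$ is Cauchy and its limit is the desired $p^{+}$. The same scheme gives $p^{-}$, and then your final bracket step is fine.
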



\subsection{L-shadowing map} The next definition is natural.

\begin{df} We say that a pseudo-orbit map $\shm_f$ is an
	\textit{L-shadowing map} if for all $\epsilon>0$ there is $\gamma>0$ such that
	for every $\gamma$-orbit $x$ that is also a two-sided limit pseudo-orbit
	it holds that $\shm_f(x)$ $\epsilon$-shadows and two-sided limit shadows $x$.
\end{df}

\begin{prop}
	\label{propLshmImplicaShm}
	Every L-shadowing map is a shadowing map.
\end{prop}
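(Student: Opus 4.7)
The plan is to reduce the claim to Proposition \ref{propShmSH} by showing that an L-shadowing map $\shm_f\colon\tM(f,\delta)\to M$ induces shadowing. Fix $\epsilon>0$ and let $\gamma\in(0,\delta]$ be the constant supplied by the L-shadowing hypothesis associated to $\epsilon$. The issue is that this hypothesis only asserts $\epsilon$-shadowing for those $\gamma$-orbits that are also two-sided limit pseudo-orbits, so the task is to remove this extra condition and obtain $\epsilon$-shadowing of arbitrary $x\in\tM(f,\gamma)$ by $\shm_f(x)$.

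The key step is to approximate an arbitrary $\gamma$-orbit $x$ by pseudo-orbits that are genuinely two-sided limit pseudo-orbits. I would set
\[
x^{(n)}_i=\begin{cases} f^{i-n}(x_n) & i>n,\\ x_i & -n\leq i\leq n,\\ f^{i+n}(x_{-n}) & i<-n.\end{cases}
\]
A direct check shows that $\dist(x^{(n)}_{i+1},f(x^{(n)}_i))$ equals the corresponding discrepancy of $x$ for $-n\leq i<n$ and vanishes otherwise (since extending by true iterates of $f$ introduces no new jumps). Hence each $x^{(n)}$ belongs to $\tM(f,\gamma)$ and its discrepancies tend to $0$ as $|i|\to\infty$, so it is both a $\gamma$-orbit and a two-sided limit pseudo-orbit. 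The L-shadowing hypothesis therefore yields $\dist(f^i(\shm_f(x^{(n)})),x^{(n)}_i)\leq\epsilon$ for every $i\in\Z$ and every $n\geq 1$.

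To conclude I would pass to the limit in $n$. Fix $i\in\Z$; for every $n>|i|$ one has $x^{(n)}_i=x_i$, so $\dist(f^i(\shm_f(x^{(n)})),x_i)\leq\epsilon$. By construction $x^{(n)}\to x$ in $\tM$ as $n\to\infty$, and the continuity of $\shm_f$ gives $\shm_f(x^{(n)})\to\shm_f(x)$, whence $f^i(\shm_f(x^{(n)}))\to f^i(\shm_f(x))$. Taking the limit in the previous inequality yields $\dist(f^i(\shm_f(x)),x_i)\leq\epsilon$ for every $i\in\Z$, which says exactly that $\shm_f(x)$ $\epsilon$-shadows $x$. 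Proposition \ref{propShmSH} then upgrades this induced shadowing to the shadowing-map property. The only delicate point is the construction of $x^{(n)}$, and it is painless because pasting in real orbit segments creates no additional pseudo-orbit discrepancies; the rest of the argument is a straightforward continuity-and-limit passage.
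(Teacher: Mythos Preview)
Your proof is correct and follows essentially the same route as the paper: you truncate an arbitrary $\gamma$-orbit to a window $[-n,n]$ and extend by true orbit segments, observe that the resulting pseudo-orbits are two-sided limit $\gamma$-orbits converging to $x$, apply the L-shadowing hypothesis, and pass to the limit using continuity of $\shm_f$ before invoking Proposition~\ref{propShmSH}. Your write-up is in fact a bit more explicit about why the truncated sequences stay in $\tM(f,\gamma)$ and how the limit passage works.
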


\begin{proof}
	Given $\epsilon>0$ take $\gamma$ from the definition of L-shadowing map.
	For $x$ a $\gamma$-orbit define the sequence of $\gamma$-orbits $y_n$, $n\geq 1$, as
	\[
	(y_n)_i=\left\{
	\begin{array}{ll}
		x_i             & \text{ if }|i|\leq n,\\
		f^{i-n}(x_n)    & \text{ if }i\geq n,\\
		f^{i+n}(x_{-n}) & \text{ if }i\leq n.
	\end{array}
	\right.
	\]
	We see that $y_n\to x$ and each $y_n$ is a two-sided limit $\gamma$-orbit.
	By definition, $\shm_f(y_n)$ is an $\epsilon$-shadow of $y_n$.
	Taking limit we conclude that $\shm_f(x)$ is an $\epsilon$-shadow of $x$.
	Thus, $\shm_f$ induces shadowing and is a shadowing map (Proposition \ref{propShmSH}).
\end{proof}

\begin{rmk}
	\label{rmkLshmLshp}
	By definition, every L-shadowing map induces the L-shadowing property.
\end{rmk}


\subsection{L-brackets}
In light of L-shadowing maps we define L-brackets.
\begin{df}
	An \textit{L-bracket} is a continuous map $[\cdot,\cdot]\colon \Delta_\delta\to M$
	such that:
	\begin{enumerate}
		\item $[p,p]=p$ for all $p\in M$,
		\item for all $\epsilon>0$ there is $\gamma>0$ satisfying
		$$\dist(p,q)\leq\gamma \Rightarrow
		[p,q]\in V^s_\epsilon(q)\cap V^u_\epsilon(p).$$
	\end{enumerate}
\end{df}

\begin{rmk}
	From Theorem \ref{thmE-ACCV}
	we have that for a homeomorphism with an L-bracket
	the shadowing property and the L-shadowing property are equivalent.
\end{rmk}

\begin{prop}
	The bracket induced by an L-shadowing map is an L-bracket.
\end{prop}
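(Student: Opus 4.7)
The plan is to feed the L-shadowing map with the connecting pseudo-orbit and read off exactly what we need.

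Fix $\epsilon>0$. First I would pick $\gamma>0$ from the definition of L-shadowing map corresponding to $\epsilon$, and then, if necessary, shrink $\gamma$ further to be at most the $\delta$ on which the induced bracket is defined. The key step is to observe that for any pair $(p,q)\in\Delta_\gamma$ the connecting sequence $x=\con_f(p,q)$ is a $\gamma$-orbit: by construction $x_{i+1}=f(x_i)$ for every $i\neq -1$, while at $i=-1$ we have $\dist(f(x_{-1}),x_0)=\dist(p,q)\leq\gamma$. In particular $\dist(f(x_i),x_{i+1})=0$ for all $|i|\geq 1$, so $x$ is trivially a two-sided limit pseudo-orbit.

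With this, the L-shadowing hypothesis on $\shm_f$ applies to $x$, giving that $[p,q]=\shm_f(x)$ both $\epsilon$-shadows and two-sided limit shadows $x$. For $n\geq 0$, since $x_n=f^n(q)$, this reads
\[
\dist(f^n([p,q]),f^n(q))\leq\epsilon\quad\text{and}\quad \dist(f^n([p,q]),f^n(q))\to 0\text{ as }n\to+\infty,
\]
which is precisely $[p,q]\in W^s_\epsilon(q)\cap W^s(q)=V^s_\epsilon(q)$. For $n\leq 0$ we instead have $x_n=f^n(p)$, and the same two estimates (with $n\to -\infty$) give $[p,q]\in W^u_\epsilon(p)\cap W^u(p)=V^u_\epsilon(p)$.

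Continuity of the bracket and the equality $[p,p]=p$ are already guaranteed by the general proposition on induced brackets (since any L-shadowing map is a pseudo-orbit map), so the two conditions in the definition of L-bracket are satisfied. There is no real obstacle here; the only thing to watch is to make sure the conventions for $V^s_\epsilon$ and $V^u_\epsilon$ are matched correctly to the forward and backward parts of the connecting pseudo-orbit, which is exactly how $\con_f(p,q)$ was designed.
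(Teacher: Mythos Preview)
Your overall strategy is correct and close to the paper's (the paper routes through Propositions~\ref{propLshmImplicaShm} and~\ref{propShmShb} to obtain the shadowing-bracket inclusion $[p,q]\in W^s_\epsilon(q)\cap W^u_\epsilon(p)$ first, then appends the limit condition, whereas you do both in one pass). There is, however, one genuine slip. You assert ``For $n\leq 0$ we instead have $x_n=f^n(p)$'', but by the definition of $\con_f$ one has $x_0=q$, not $p$; the equality $x_n=f^n(p)$ holds only for $n\leq -1$. Consequently the $\epsilon$-shadowing at $n=0$ yields $\dist([p,q],q)\leq\epsilon$, not $\dist([p,q],p)\leq\epsilon$, so the membership $[p,q]\in W^u_\epsilon(p)$ is unverified at $n=0$.

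This is exactly the asymmetry that the paper's proof of Proposition~\ref{propShmShb} addresses: it first chooses $\epsilon'\leq\epsilon$ with $\dist(a,b)\leq\epsilon'\Rightarrow\dist(f(a),f(b))\leq\epsilon$, takes $\gamma$ so that $\gamma$-orbits are $\epsilon'$-shadowed, and then from $\dist(f^{-1}([p,q]),f^{-1}(p))\leq\epsilon'$ at $n=-1$ deduces $\dist([p,q],p)\leq\epsilon$. An equivalent quick fix in your argument is to take $\gamma$ for $\epsilon/2$-shadowing with $\gamma\leq\epsilon/2$ and use the triangle inequality $\dist([p,q],p)\leq\dist([p,q],q)+\dist(q,p)\leq\epsilon/2+\epsilon/2$. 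With this correction your proof goes through; ironically, the issue lies precisely in the ``conventions'' point you flag in your last sentence.
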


\begin{proof}
	By Proposition \ref{propLshmImplicaShm} we know that every L-shadowing map is a shadowing map.
	Thus, we can apply Proposition \ref{propShmShb} to conclude that
	$[p,q]=\shm_f(\con_f(p,q))$ is a shadowing bracket.
	Then, given $\epsilon>0$ take $\gamma>0$ such that if
	$\dist(p,q)\leq\gamma$ then $[p,q]\in W^s_\epsilon(q)\cap W^u_\epsilon(p)$.
	Finally, if $x=\con_f(p,q)$ we see that $x$ is a two-sided limit pseudo-orbit.
	Thus, from the definition of L-shadowing map, we have $\shm_f(x)\in V^u(p)\cap V^s(q)$ and the proof ends.
\end{proof}

\begin{prop}
	\label{propOdoContraEj}
	The homeomorphisms with h-shadowing have not L-shadowing property if $M$ has infinitely many points.
	Consequently, these systems have a shadowing map (Remark \ref{rmkHshImpShm}) but not an L-shadowing map (Remark \ref{rmkLshmLshp}).
\end{prop}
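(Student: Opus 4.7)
The plan is to argue by contradiction: assume $f$ has both h-shadowing and the L-shadowing property and derive that $M$ has only finitely many points. By Theorem \ref{thmBGOh-sh}, h-shadowing is equivalent to $f$ being equicontinuous on a totally disconnected compact metric space. The key consequence I will exploit is the following \emph{rigidity under equicontinuity}: since $f$ is a homeomorphism of a compact metric space, both $f$ and $f^{-1}$ are equicontinuous, so for every $\epsilon>0$ there is $\eta>0$ such that $\dist(a,b)<\eta$ implies $\dist(f^n(a),f^n(b))<\epsilon$ for all $n\in\Z$. In particular, if $\dist(f^n(a),f^n(b))\to 0$ as $n\to+\infty$ (or as $n\to -\infty$), then taking any $n$ for which $\dist(f^n(a),f^n(b))<\eta$ and applying $f^{-n}$ yields $\dist(a,b)<\epsilon$; since $\epsilon$ is arbitrary, $a=b$.

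Now fix any $\epsilon_0>0$ and let $\delta>0$ be the constant provided by the assumed L-shadowing property for $\epsilon_0$. Because $M$ is compact and infinite, it contains an accumulation point, so there exist $p_1\neq p_2$ in $M$ with $\dist(p_1,p_2)\leq\delta$. Consider the concatenated sequence $x=\con_f(p_1,p_2)$ from \S\ref{secConnecting map}. By construction $\dist(f(x_i),x_{i+1})=0$ for every $i\neq -1$ and equals $\dist(p_1,p_2)\leq\delta$ for $i=-1$; hence $x\in\tM(f,\delta)$ and $x$ is trivially a two-sided limit pseudo-orbit (only one non-zero jump). The L-shadowing property produces $r\in M$ that $\epsilon_0$-shadows and two-sided limit shadows $x$; thus $\dist(f^n(r),f^n(p_1))\to 0$ as $n\to -\infty$ and $\dist(f^n(r),f^n(p_2))\to 0$ as $n\to +\infty$. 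Applying the rigidity from the previous paragraph on each tail we obtain $r=p_1$ and $r=p_2$, which contradicts $p_1\neq p_2$. This shows $f$ cannot have L-shadowing, and the second assertion of the proposition follows immediately from Remarks \ref{rmkHshImpShm} and \ref{rmkLshmLshp}.

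The main potential obstacle is the rigidity statement itself: it depends on the bilateral equicontinuity of $\{f^n:n\in\Z\}$, which in turn requires that $f^{-1}$ be equicontinuous whenever $f$ is. This is standard for homeomorphisms of compact metric spaces (one can either invoke the existence of an equivalent $f$-invariant metric arising from the closure of $\{f^n\}$ in $(\homeo(M),\distcc)$, or observe directly that equicontinuity of $f$ on compact $M$ is symmetric in $f$ and $f^{-1}$). Everything else in the argument is a short computation with the connecting map.
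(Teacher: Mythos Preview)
Your argument is correct and follows essentially the same approach as the paper. The paper's proof is a single sentence (``The L-shadowing property implies the existence of asymptotic points but this is incompatible with equicontinuity if $M$ has infinitely many points''), and you have supplied exactly the details behind that sentence: the connecting pseudo-orbit $\con_f(p_1,p_2)$ produces, via L-shadowing, a point $r$ forward-asymptotic to $p_2$ and backward-asymptotic to $p_1$, while bilateral equicontinuity forces any asymptotic pair to coincide.
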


\begin{proof}
	The L-shadowing property implies the existence of asymptotic points but this is incompatible with equicontinuity if $M$ has infinitely many points.
\end{proof}

\section{Self-tuning}
\label{secAlmShInvShm}
In this section we introduce and study the property of self-tuning. In \S\ref{secHyperbolic brackets} we consider hyperbolic brackets and show how they induce self-tuning pseudo-orbit maps.
It is a technical proof based on Bowen's arguments of \cite{Bowen75}.
In \S\ref{secNS} this result is applied to north-south dynamics.
We start with the definition and some fundamental properties.
\begin{df}
	\label{defAlmSHM}
	We say that the pseudo-orbit map $\shm_f\colon \tM(f,\delta)\to M$ has \textit{self-tuning} if for all $\epsilon>0$ there is $\gamma>0$ such that if
	$\tdist_s(\shift^i(x),\orb_f(x_i))<\gamma$ for some $x\in \tM(f,\delta)$ and some $i\in\Z$ then
	\begin{equation}
		\label{ecuAShInv}
		\dist(f^i(\shm_f(x)),\shm_f(\shift^i(x)))\leq\epsilon.
	\end{equation}
\end{df}

\begin{prop}
	\label{propAlmSHMimpL-shm}
	Every self-tuning pseudo-orbit map is an L-shadowing map.
\end{prop}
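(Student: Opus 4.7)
The plan has two parts matching the two conclusions of the L-shadowing map definition: producing an $\epsilon$-shadow for any $\gamma$-orbit, and a two-sided limit shadow when the pseudo-orbit is two-sided limit.

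For the first part, I would show that self-tuning implies $\shm_f$ is a shadowing map. Given $\epsilon>0$, pick $\gamma_0$ from the self-tuning condition, then apply Proposition \ref{propDiscrep}(4) to obtain $\gamma>0$ making every $\gamma$-orbit $x$ satisfy $\tdist_s(\shift^i(x),\orb_f(x_i))\leq\gamma_0$ uniformly in $i\in\Z$ (this uniformity is built into $\mathcal D^2_f$ via the supremum). Self-tuning then delivers \eqref{ecuShm}, and Proposition \ref{propShmSH} converts this into genuine $\epsilon$-shadowing.

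For the second part, the central claim is that for a two-sided limit $\gamma$-orbit $x$,
\[
\tdist_s(\shift^i(x),\orb_f(x_i))\to 0\quad\text{as }|i|\to\infty.
\]
Once this is in hand, self-tuning forces $\dist(f^i(\shm_f(x)),\shm_f(\shift^i(x)))\to 0$; meanwhile the uniform continuity of $\shm_f$, combined with $\shm_f(\orb_f(x_i))=x_i$, gives $\dist(\shm_f(\shift^i(x)),x_i)\to 0$. The triangle inequality then yields $\dist(f^i(\shm_f(x)),x_i)\to 0$, which is exactly two-sided limit shadowing.

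To prove the central claim, split $\tdist_s(\shift^i(x),\orb_f(x_i))=\sum_{j\in\Z}\mu^{|j|}\dist(x_{i+j},f^j(x_i))$ into a window $|j|\le N$ and a tail bounded by $\diam(M)\sum_{|j|>N}\mu^{|j|}$, which is $<\eta/2$ for $N$ large. For each fixed $j$ in the window, $\dist(x_{i+j},f^j(x_i))\to 0$ as $|i|\to\infty$ by induction on $|j|$: the step uses
\[
\dist(x_{i+j+1},f^{j+1}(x_i))\leq \dist(x_{i+j+1},f(x_{i+j}))+\dist\bigl(f(x_{i+j}),f(f^j(x_i))\bigr),
\]
where the first term vanishes by the two-sided limit pseudo-orbit hypothesis and the second by uniform continuity of $f$ applied to the inductive bound; negative $j$ is symmetric with $f^{-1}$. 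The main obstacle is precisely this last step: translating the coordinate-wise condition $\dist(f(x_i),x_{i+1})\to 0$ into convergence of the summed metric $\tdist_s$ requires handling uniform continuity of arbitrarily high iterates of $f$ on each finite window before the exponential tail of $\mu^{|j|}$ takes over.
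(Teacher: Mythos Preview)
Your proposal is correct and follows essentially the same route as the paper's proof: use Proposition~\ref{propDiscrep}(4) to verify Definition~\ref{dfShm} and hence obtain $\epsilon$-shadowing, then for two-sided limit pseudo-orbits show $\tdist_s(\shift^i(x),\orb_f(x_i))\to 0$, invoke self-tuning, use uniform continuity of $\shm_f$ together with $\shm_f(\orb_f(x_i))=x_i$, and finish with the triangle inequality. The only difference is that the paper simply asserts the implication $\dist(f(x_i),x_{i+1})\to 0 \Rightarrow \tdist_s(\shift^i(x),\orb_f(x_i))\to 0$ without argument, whereas you supply the (correct) tail-plus-window induction that justifies it.
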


\begin{proof}

	Given $\epsilon>0$ take $\gamma>0$ as in Definition \ref{defAlmSHM}.
	By Proposition \ref{propDiscrep} there is $\rho>0$ such that if
	$x$ is a $\rho$-orbit then $\mathcal{D}^2_f(x)<\gamma$ and
	consequently
	$\tdist_s(\shift^i(x),\orb_f(x_i))<\gamma$ for all $i\in\Z$.
	Thus, every $\rho$-orbit satisfies \eqref{ecuAShInv} for all $i\in\Z$, $\shm_f$ is a shadowing map (Definition \ref{dfShm}) and it induces shadowing.

	Suppose that $x$ is a $\rho$-orbit that is also a two-sided limit pseudo-orbit. That is, $\dist(f(x_i),x_{i+1})\to 0$ as $|i|\to\infty$.
	This implies that $\tdist_s(\shift^i(x),\orb_f(x_i))\to 0$ as
	$|i|\to\infty$.
	Thus,
	$\dist(f^i(\shm_f(x)),\shm_f(\shift^i(x)))\to 0$ as $|i|\to\infty$.
	Also, from the uniform continuity of $\shm_f$ we have
	$$\dist(\shm_f(\shift^i(x)),x_i)
	=\dist(\shm_f(\shift^i(x)),\shm_f(\orb_f(x_i)))
	\to 0$$
	as $|i|\to\infty$.
	Finally,
	\[
	\dist(f^i(\shm_f(x)),x_i) \leq
	\dist(f^i(\shm_f(x)),\shm_f(\shift^i(x)))+
	\dist(\shm_f(\shift^i(x)),x_i)\to 0.
	\]
	This proves that $\shm_f$ is an L-shadowing map.
\end{proof}

We have neither a counterexample nor a proof for the converse of Proposition \ref{propAlmSHMimpL-shm}. The next characterization is used to prove that hyperbolic brackets induce the self-tuning property.

\begin{prop}
	\label{propCharAlmShInvShm}
	A pseudo-orbit map $\shm_f\colon \tM(f,\delta)\to M$ has self-tuning if and only if for all $\epsilon'>0$ there is $\gamma'>0$ such that if
	$\tdist_s(\shift^i(x),\orb_f(x_i))<\gamma'$ for some $x\in \tM(f,\delta)$ and some $i\in\Z$ then
	\begin{equation}
		\label{ecuAShInv2}
		\dist(f^i(\shm_f(x)),x_i)\leq\epsilon'.
	\end{equation}
\end{prop}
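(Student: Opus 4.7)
The plan is to recognize that the proposition is a triangle-inequality equivalence whose bridge quantity is $\dist(x_i,\shm_f(\shift^i(x)))$. Writing $y=\shift^i(x)$, one has $y_0=x_i$ and $\shm_f(\orb_f(y_0))=y_0$, so whenever $\tdist_s(y,\orb_f(y_0))$ is small, the continuity of $\shm_f$ forces $\shm_f(y)$ to be close to $y_0=x_i$. This turns \eqref{ecuAShInv} and \eqref{ecuAShInv2} into essentially the same statement up to an $\epsilon/2$ split.

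For the forward direction (self-tuning $\Rightarrow$ \eqref{ecuAShInv2}), given $\epsilon'>0$ I would apply the self-tuning definition with $\epsilon=\epsilon'/2$ to obtain $\gamma_1>0$, and invoke the uniform continuity of $\shm_f$ on the compact space $\tM(f,\delta)$ to find $\gamma_2>0$ with $\tdist_s(z_1,z_2)<\gamma_2\Rightarrow \dist(\shm_f(z_1),\shm_f(z_2))<\epsilon'/2$. Setting $\gamma'=\min(\gamma_1,\gamma_2)$ and combining
\[
\dist(f^i(\shm_f(x)),\shm_f(\shift^i(x)))\leq \epsilon'/2, \qquad \dist(\shm_f(\shift^i(x)),\shm_f(\orb_f(x_i)))\leq \epsilon'/2,
\]
via the triangle inequality, using $\shm_f(\orb_f(x_i))=x_i$, delivers \eqref{ecuAShInv2}.

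For the reverse direction (\eqref{ecuAShInv2} $\Rightarrow$ self-tuning), no separate continuity argument is needed since the new characterization is simply applied twice. Given $\epsilon>0$, let $\gamma$ be produced by \eqref{ecuAShInv2} for $\epsilon'=\epsilon/2$. Assuming $\tdist_s(\shift^i(x),\orb_f(x_i))<\gamma$, I first apply \eqref{ecuAShInv2} at the pair $(x,i)$ to conclude $\dist(f^i(\shm_f(x)),x_i)\leq \epsilon/2$. Then I set $y=\shift^i(x)$, noting $y\in\tM(f,\delta)$ by Remark \ref{rmkMdeltaShiftInv} and $y_0=x_i$, so the hypothesis is identical to $\tdist_s(y,\orb_f(y_0))<\gamma$; applying \eqref{ecuAShInv2} at $(y,0)$ yields $\dist(\shm_f(\shift^i(x)),x_i)\leq \epsilon/2$. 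A final triangle inequality delivers \eqref{ecuAShInv}.

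There is no serious obstacle here: the content of the proposition is just the self-duality of the self-tuning condition under shifting indices, which converts comparison with $\shm_f(\shift^i(x))$ into comparison with the coordinate $x_i$. The only point requiring a little care is the forward direction, where the uniform continuity of $\shm_f$ on the compact metric space $(\tM(f,\delta),\tdist_s)$ must be explicitly invoked to control the bridge term.
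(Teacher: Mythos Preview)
Your proof is correct and follows essentially the same triangle-inequality route as the paper, with the bridge term $\dist(\shm_f(\shift^i(x)),x_i)$ controlled via $\shm_f(\orb_f(x_i))=x_i$. The forward direction is identical; in the reverse direction the paper invokes the uniform continuity of $\shm_f$ a second time to bound that bridge term, whereas you obtain the same bound by applying the hypothesis \eqref{ecuAShInv2} at the shifted pair $(\shift^i(x),0)$---a small but neat shortcut.
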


\begin{proof}
	We start with an argument for both directions.
	Given $\epsilon,\epsilon'>0$ take $\gamma''>0$ such that
	if $\tdist_s(y,z)<\gamma''$ then
	$\dist(\shm_f(y),\shm_f(z))<\min\{\epsilon,\epsilon'\}/2$.

	To prove the direct take $\gamma\in(0,\gamma'')$ such that if
	$\tdist_s(\shift^i(x),\orb_f(x_i))<\gamma$ then
	$$\dist(f^i(\shm_f(x)),\shm_f(\shift^i(x)))\leq\epsilon'/2.$$
	Since
	$\tdist_s(\shift^i(x),\orb_f(x_i))<\gamma''$ we have
	$$\dist(\shm_f(\shift^i(x)),x_i)=
	\dist(\shm_f(\shift^i(x)),\shm_f(\orb_f(x_i)))<\epsilon'/2.$$
	Thus, $\dist(f^i(\shm_f(x)),x_i)\leq\epsilon'$.

	The converse is quite similar.
	Take $\gamma'\in(0,\gamma'')$ such that if
	$\tdist_s(\shift^i(x),\orb_f(x_i))<\gamma'$ then
	$$\dist(f^i(\shm_f(x)),x_i)\leq\epsilon/2.$$
	Again we conclude $\dist(\shm_f(\shift^i(x)),x_i)<\epsilon/2$ and
	$\dist(f^i(\shm_f(x)),\shm_f(\shift^i(x)))\leq\epsilon$.
\end{proof}

\subsection{Hyperbolic brackets}
\label{secHyperbolic brackets}
In this section we show that hyperbolic brackets induce shadowing maps with self-tuning.
We will assume that $f$ and $f^{-1}$ are Lipschitz.

\begin{df}
	Suppose that $c\geq 1$ and $\mu\in(0,1)$.
	A bracket $[\cdot,\cdot]\colon\Delta_{\gamma}\to M$ has $(c,\mu)$-\textit{hyperbolic contraction} if $\dist(p,q)\leq\gamma$ implies
	\begin{align*}
		\dist(f^n([p,q]),f^n(q))      &\leq c\mu^n\dist(p,q),\\
		\dist(f^{-n}([p,q]),f^{-n}(p))&\leq c\mu^n\dist(p,q)
	\end{align*}
	for all $n\geq 0$.
\end{df}

\subsubsection{Lipschitz homeomorphisms}
In this section we give some elementary estimates for Lipschitz maps.
For $L>0$ and $n\geq 1$ define $L_n=1+L+L^2+\dots+L^{n-1}$. Note that
$L_1=1$ and $LL_n+1=L_{n+1}$.
Define
$$\delta_i(x)=\dist(f(x_{i-1}),x_i)$$ for all $i\in\Z$ and
$x\in \tM$.

\begin{lem}
	\label{lemShChoto}
	If $f$ is Lipschitz with Lipschitz constant $L>0$
	then
	\begin{equation}
		\label{ecuShChoto2}
		\dist(f^n(x_0),x_n)
		\leq
		\sum_{j=1}^n\delta_j(x)L^{n-j}
		\leq
		L^n\sum_{j=1}^n\delta_j(x)
		\text{ for all }n\geq 1
	\end{equation}
	for all $x\in\tM$.
	If $x$ is a $\delta$-orbit then
	\begin{equation}
	\label{ecuShChoto}
	\dist(f^n(x_0),x_n)\leq\delta L_n\text{ for all }n\geq 1.
\end{equation}
\end{lem}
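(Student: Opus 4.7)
The plan is an induction on $n \geq 1$, unwinding the pseudo-orbit one step at a time via the triangle inequality and the Lipschitz bound on $f$.

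For the base case $n=1$, the statement is immediate: $\dist(f(x_0),x_1) = \delta_1(x)$, which equals $\delta_1(x) L^{1-1}$. For the inductive step, write
\[
\dist(f^{n+1}(x_0),x_{n+1}) \leq \dist(f^{n+1}(x_0),f(x_n)) + \dist(f(x_n),x_{n+1}) \leq L\cdot\dist(f^n(x_0),x_n) + \delta_{n+1}(x),
\]
and apply the inductive hypothesis to get
\[
\dist(f^{n+1}(x_0),x_{n+1}) \leq L\sum_{j=1}^{n}\delta_j(x) L^{n-j} + \delta_{n+1}(x) = \sum_{j=1}^{n+1}\delta_j(x) L^{(n+1)-j},
\]
which closes the induction and proves the first inequality in \eqref{ecuShChoto2}.

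For the second inequality in \eqref{ecuShChoto2}, note that we may assume $L \geq 1$ without loss of generality (any larger constant is still a Lipschitz constant); then $L^{n-j} \leq L^n$ for every $j \geq 1$, so $\sum_{j=1}^n \delta_j(x) L^{n-j} \leq L^n \sum_{j=1}^n \delta_j(x)$.

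Finally, for \eqref{ecuShChoto}, if $x$ is a $\delta$-orbit then $\delta_j(x) \leq \delta$ for all $j$, and substituting into the first bound of \eqref{ecuShChoto2} and reindexing by $k = n-j$ gives
\[
\dist(f^n(x_0),x_n) \leq \delta \sum_{j=1}^n L^{n-j} = \delta \sum_{k=0}^{n-1} L^k = \delta L_n.
\]
There is no genuine obstacle here; the only thing to be careful about is the harmless convention $L \geq 1$ used in the coarser second inequality of \eqref{ecuShChoto2}.
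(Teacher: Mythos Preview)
Your proof is correct and follows essentially the same route as the paper: the same induction via the triangle inequality and the Lipschitz bound for the first inequality, the observation $L\geq 1$ for the second, and the substitution $\delta_j(x)\leq\delta$ for the last. The only cosmetic difference is that the paper justifies $L\geq 1$ by compactness of $M$ (a homeomorphism of a compact space with more than one point cannot have Lipschitz constant $<1$), whereas you argue one may enlarge $L$; both are fine.
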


\begin{proof}
	We prove the first inequality by induction.
	For $n=1$ we have
	$\dist(f(x_0),x_1)\leq\delta_1(x)$ (in fact an equality by definition).
	If the inequality is true for $n\geq 1$ then
	\begin{align*}
		\dist(f^{n+1}(x_0),x_{n+1}) &
		\leq \dist(f^{n+1}(x_0),f(x_n))+\dist(f(x_n),x_{n+1})\\
		&\leq L \sum_{j=1}^n\delta_j(x)L^{n-j} + \delta_{n+1}=\sum_{j=1}^{n+1}\delta_j(x)L^{n+1-j}
	\end{align*}
	Since $M$ is compact we have $L\geq 1$. Thus $L^n\geq L^{n-j}$ for all $1\leq j\leq n$ and
	$$\sum_{j=1}^n\delta_j(x)L^{n-j}
	\leq
	L^n\sum_{j=1}^n\delta_j(x).$$
	If $x$ is a $\delta$-orbit then $\delta_j(x)\leq\delta$ for all $j\in\Z$ and
	$$\sum_{j=1}^n\delta_j(x)L^{n-j}
	\leq
	\delta\sum_{j=1}^nL^{n-j}=\delta L_n.$$
	This proves the lemma.
\end{proof}

\subsubsection{A pseudo-orbit map}
\label{secA pseudo-orbit map}
In this section we define a pseudo-orbit map from a hyperbolic bracket.
In \S\ref{secBowenBr} we show that in fact it has self-tuning.
Suppose that $f$ is Lipschitz and has a hyperbolic bracket
$[\cdot,\cdot]\colon\Delta_{\gamma}\to M$
with parameters $c,\mu$ as explained before.
Fix $m$ large enough that
\begin{equation}
	\label{ecuBowenCondM}
	c\mu^m\leq 1/2c.
\end{equation}
For a $\delta$-orbit $x$ we try to
define the sequences $q_n,p_n\in M$, for $n\geq 0$, as
\begin{equation}
	\label{ecuBowTechDef}
	\left\{
	\begin{array}{l}
		q_0=x_0 \\
		q_n=[f^m(q_{n-1}),x_{nm}] \text{ if }n\geq 1,\\
		p_n=f^{-nm}(q_n).
	\end{array}
	\right.
\end{equation}
This makes sense if the bracket can be applied, that is, if
$\dist(f^m(q_{n-1}),x_{nm})\leq \gamma$ for all $n\geq 1$.

\begin{lem}
	\label{lemBowenWellDef}
	If $0<2\delta L_m\leq\gamma$ and $x$ is a $\delta$-orbit then
	$\dist(f^m(q_{n-1}),x_{nm})\leq 2\delta L_m$ for all $n\geq 1$.
	In particular, the sequence $q_n$ is well-defined for $\delta$-orbits with
	$0<2\delta L_m\leq\gamma$.
\end{lem}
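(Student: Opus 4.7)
The plan is a straightforward induction on $n \geq 1$, with the choice \eqref{ecuBowenCondM} of $m$ supplying exactly the contraction strength needed to absorb a fresh copy of the $\delta$-orbit error at each step.

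For the base case $n=1$, the bracket does not yet appear: we simply have $q_0 = x_0$ and need $\dist(f^m(x_0), x_m) \leq 2\delta L_m$, which is immediate from \eqref{ecuShChoto} in Lemma \ref{lemShChoto}.

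For the inductive step, I would assume $\dist(f^m(q_{n-1}), x_{nm}) \leq 2\delta L_m$, which by the hypothesis $2\delta L_m \leq \gamma$ makes $q_n = [f^m(q_{n-1}), x_{nm}]$ well-defined. Then I would split the target quantity with the triangle inequality:
\[
\dist(f^m(q_n), x_{(n+1)m}) \leq \dist(f^m(q_n), f^m(x_{nm})) + \dist(f^m(x_{nm}), x_{(n+1)m}).
\]
The second term is bounded by $\delta L_m$ applying Lemma \ref{lemShChoto} to the $\delta$-orbit segment $x_{nm}, x_{nm+1}, \dots, x_{(n+1)m}$. For the first term, I would use the stable side of the $(c,\mu)$-hyperbolic contraction (with $k=m$): since $q_n = [f^m(q_{n-1}), x_{nm}]$,
\[
\dist(f^m(q_n), f^m(x_{nm})) \leq c\mu^m \cdot \dist(f^m(q_{n-1}), x_{nm}) \leq c\mu^m \cdot 2\delta L_m \leq \frac{\delta L_m}{c} \leq \delta L_m,
\]
where the penultimate inequality is \eqref{ecuBowenCondM} and the last uses $c \geq 1$. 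Summing gives $2\delta L_m$, closing the induction.

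There is no real obstacle here; the technical content is entirely front-loaded into the calibration \eqref{ecuBowenCondM} of $m$, which was arranged precisely so that applying the bracket once ``repays'' the $\delta L_m$ error accumulated over a block of length $m$ and leaves room for the next block's error. The ``in particular'' clause about $q_n$ being well-defined for all $n$ is then an automatic corollary, since at every step the argument of $[\cdot,\cdot]$ lies within $2\delta L_m \leq \gamma$ of the diagonal.
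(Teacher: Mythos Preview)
Your proof is correct and follows essentially the same argument as the paper's: induction on $n$, with the base case handled by \eqref{ecuShChoto}, and the inductive step split via the triangle inequality into a contraction term (controlled by \eqref{ecuBowenCondM}) and a fresh pseudo-orbit error term (again \eqref{ecuShChoto}). The only cosmetic difference is that the paper records the sharper bound $\delta L_m$ in the base case before relaxing it to $2\delta L_m$.
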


\begin{proof}
	Arguing by induction, notice that for $n=1$ we can apply \eqref{ecuShChoto} and
	$$\dist(f^m(q_0),x_m)=
	\dist(f^m(x_0),x_m)\leq\delta L_m.$$

	Now suppose that $\dist(f^m(q_{n-1}),x_{nm})\leq2\delta L_m$ for some $n\geq 1$.
	Since $q_n=[f^m(q_{n-1}),x_{nm}]$ we can apply the hyperbolic contraction condition, the fact $c\geq 1$ and \eqref{ecuBowenCondM} to obtain
	\[
	\dist(f^m(q_n),f^m(x_{nm}))
	=\dist(f^m([f^m(q_{n-1}),x_{nm}]),f^m(x_{nm}))
	\leq c\mu^m\dist(f^m(q_{n-1}),x_{nm})\leq\frac{1}{2c}2\delta L_m\leq\delta L_m.
	\]
	Also, by \eqref{ecuShChoto} we have
	$$\dist(f^m(x_{nm}),x_{(n+1)m})=
	\dist(f^m(x_{nm}),x_{nm+m})
	\leq\delta L_m.$$
	The triangle inequality gives
	\[
	\dist(f^m(q_n),x_{(n+1)m})\leq
	\dist(f^m(q_n),f^m(x_{nm}))
	+
	\dist(f^m(x_{nm}),x_{(n+1)m})\leq
	\delta L_m+\delta L_m=2\delta L_m
	\]
	and the proof ends.
\end{proof}
In what follows we assume that $x$ is a $\delta$-orbit with
$0<2\delta L_m\leq\gamma$.

\begin{lem}
	\label{lemContHypPastLemmaBowen}
	For all $k\geq 0$, $t\geq 1$
	\begin{equation}
		\label{ecuContHypPastLemmaBowen}
		\dist(f^{-k}(q_t),f^{-k}(f^m(q_{t-1})))\leq
		c\mu^k\dist(f^m(q_{t-1}),x_{tm})\leq
		c\mu^k(2\delta L_m).
	\end{equation}
\end{lem}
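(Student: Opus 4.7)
The plan is to read off both inequalities directly from the structure of the definition \eqref{ecuBowTechDef} combined with the hypotheses already established. The first inequality is nothing more than the unstable half of the $(c,\mu)$-hyperbolic contraction applied to the bracket expression $q_t = [f^m(q_{t-1}), x_{tm}]$; the second inequality is Lemma~\ref{lemBowenWellDef}, which is exactly where we verified that the bracket is applicable and that the bracketed points lie at distance at most $2\delta L_m$.

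More concretely, first I would set $p = f^m(q_{t-1})$ and $q = x_{tm}$, so that by definition $q_t = [p,q]$. By Lemma~\ref{lemBowenWellDef} (which requires only the standing hypothesis $0<2\delta L_m\leq\gamma$ and $x$ being a $\delta$-orbit) we have $\dist(p,q) \leq 2\delta L_m \leq \gamma$, so the hyperbolic contraction property is available. Applying the second inequality in the definition of $(c,\mu)$-hyperbolic contraction with $n=k\geq 0$ yields
\[
\dist(f^{-k}(q_t), f^{-k}(f^m(q_{t-1}))) = \dist(f^{-k}([p,q]), f^{-k}(p)) \leq c\mu^k\dist(p,q) = c\mu^k\dist(f^m(q_{t-1}), x_{tm}),
\]
which is the first inequality of \eqref{ecuContHypPastLemmaBowen}. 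The second inequality then follows by substituting the bound $\dist(f^m(q_{t-1}), x_{tm}) \leq 2\delta L_m$ from Lemma~\ref{lemBowenWellDef}.

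There is essentially no obstacle here: the lemma is a bookkeeping step isolating the contraction estimate in the \emph{unstable} direction (as opposed to the stable direction, which would bound iterates of $f^k$ applied to $q_t$ and $f^m(q_{t-1})$). The only point to be careful about is the orientation, namely that since $q_t=[p,q]$ sits in $W^u_\varepsilon(p)$-type behaviour in the hyperbolic contraction, it is the backward iterates of $q_t$ that stay close to backward iterates of $p = f^m(q_{t-1})$, which is precisely the comparison made in the statement. I expect this lemma to feed into the subsequent Cauchy-sequence argument showing that $\{p_n\}$ converges and produces the shadowing point, with the factor $\mu^k$ providing summability.
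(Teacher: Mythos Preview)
Your proof is correct and follows exactly the same approach as the paper: apply the unstable half of the hyperbolic contraction to $q_t=[f^m(q_{t-1}),x_{tm}]$ to obtain the first inequality, and then invoke Lemma~\ref{lemBowenWellDef} for the second. The paper's proof is just a terser version of what you wrote.
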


\begin{proof}
	Since
	$q_t=[f^m(q_{t-1}),x_{tm}]$
	we can apply the hyperbolic contraction to obtain the first inequality.
	The last inequality follows from Lemma \ref{lemBowenWellDef}.
\end{proof}

\begin{lem}
	\label{lemConvUnifShm}
	For all $n\geq 0$ we have $\dist(p_{n+1},p_n)\leq
	c\mu^{(n+1)m}(2\delta L_m)$.
\end{lem}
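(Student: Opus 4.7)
The plan is to unwind the definition $p_n = f^{-nm}(q_n)$ and rewrite $\dist(p_{n+1},p_n)$ as the distance between the two points $f^{-(n+1)m}(q_{n+1})$ and $f^{-(n+1)m}(f^m(q_n))$, so that the inductive bracket $q_{n+1} = [f^m(q_n), x_{(n+1)m}]$ appears in the first slot. Once we have both arguments of $\dist$ in the form $f^{-(n+1)m}(\cdot)$, we can invoke the hyperbolic-contraction estimate that has already been packaged in Lemma \ref{lemContHypPastLemmaBowen}.

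Concretely, first I would note that $p_n = f^{-nm}(q_n) = f^{-(n+1)m}\bigl(f^m(q_n)\bigr)$, so
\[
\dist(p_{n+1},p_n) = \dist\!\bigl(f^{-(n+1)m}(q_{n+1}),\; f^{-(n+1)m}(f^m(q_n))\bigr).
\]
Next, I would apply Lemma \ref{lemContHypPastLemmaBowen} with $t = n+1$ and $k = (n+1)m$, using that $q_{n+1} = [f^m(q_n), x_{(n+1)m}]$ is exactly the bracket needed for that lemma to apply. This yields
\[
\dist(p_{n+1},p_n)\le c\mu^{(n+1)m}\dist\!\bigl(f^m(q_n),x_{(n+1)m}\bigr).
\]
Finally, I would bound $\dist(f^m(q_n),x_{(n+1)m})\le 2\delta L_m$ by Lemma \ref{lemBowenWellDef} (whose hypothesis $0<2\delta L_m\le\gamma$ we have already assumed), giving the desired estimate $\dist(p_{n+1},p_n)\le c\mu^{(n+1)m}(2\delta L_m)$.

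There is no real obstacle here: the work has been front-loaded into Lemmas \ref{lemBowenWellDef} and \ref{lemContHypPastLemmaBowen}. The only thing to be careful about is the index bookkeeping — matching the exponent $(n+1)m$ on $\mu$ with the number of negative iterates applied, and making sure the bracket $q_{n+1}$ is indexed so that the ``past'' contraction of Lemma \ref{lemContHypPastLemmaBowen} is the relevant direction (the $f^{-k}$ side of the hyperbolic bracket, since $f^m(q_n)$ sits in the first argument of $[\cdot,\cdot]$). This telescoping-type estimate is precisely what will be used in the next step to show that $(p_n)$ is Cauchy and to define the candidate shadow as its limit.
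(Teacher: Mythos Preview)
Your proof is correct and essentially identical to the paper's: the paper also rewrites $\dist(p_{n+1},p_n)$ as $\dist\bigl(f^{-(n+1)m}(q_{n+1}),f^{-(n+1)m}(f^m(q_n))\bigr)$ and applies \eqref{ecuContHypPastLemmaBowen} with $k=(n+1)m$, $t=n+1$. The only cosmetic difference is that the paper uses the second inequality of \eqref{ecuContHypPastLemmaBowen} directly (which already absorbs the $2\delta L_m$ bound from Lemma~\ref{lemBowenWellDef}), whereas you separate the two inequalities explicitly.
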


\begin{proof}
	By \eqref{ecuContHypPastLemmaBowen} with $k=(n+1)m$ and $t=n+1$ gives
	\begin{align*}
		\dist(p_{n+1},p_n)&=
		\dist(f^{-(n+1)m}(q_{n+1}),f^{-nm}(q_n))\\
		&=
		\dist(f^{-(n+1)m}(q_{n+1}),f^{-(n+1)m}(f^m(q_n))
		\leq c\mu^{(n+1)m}(2\delta L_m)
	\end{align*}
	and the proof ends.
\end{proof}

For $n\geq 1$ we can define
$\shm_{f,n}^+\colon \tM(f,\delta)\to M$ as
$\shm_{f,n}^+(x)=p_n$.
From the definitions it is clear that $\shm_{f,n}^+$ is continuous.
Lemma \ref{lemConvUnifShm} means the uniform convergence of this sequence of maps.
Therefore, we can define the continuous map
$\shm_f^+=\lim_{n\to+\infty} \shm_{f,n}$.
Analogously, we can define
$\shm_f^-\colon \tM(f,\delta)\to M$ as
$\shm_f^-=\shm_{f^{-1}}^+$,
where $\shm_{f^{-1}}^+$ is the map obtained from the procedure just explained but with $f^{-1}$ in place of $f$.
Finally consider
\begin{equation}
	\label{ecuShFromBr}
	\shm_f=[\shm_f^-,\shm_f^+].
\end{equation}

\begin{prop}
	The function $\shm_f$ is a pseudo-orbit map.
\end{prop}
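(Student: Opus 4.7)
The plan is to verify the two axioms defining a pseudo-orbit map: continuity of $\shm_f$ and the identity $\shm_f(\orb_f(p))=p$. Both rest on the estimates already collected in Lemmas \ref{lemBowenWellDef}--\ref{lemConvUnifShm}, plus the observation that on a true orbit the recursion \eqref{ecuBowTechDef} degenerates trivially.

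First I would address continuity. Each $\shm_{f,n}^+$ is continuous because the recursion \eqref{ecuBowTechDef} involves only $f$ and the continuous bracket, so by Lemma \ref{lemConvUnifShm} the sequence $\shm_{f,n}^+$ converges uniformly on $\tM(f,\delta)$ to $\shm_f^+$, which is therefore continuous; applying the same argument to $f^{-1}$ gives continuity of $\shm_f^-$. To justify that $[\shm_f^-(x),\shm_f^+(x)]$ is actually defined I would use Lemma \ref{lemConvUnifShm} once more: since $p_0=x_0$, telescoping gives
\[
\dist(\shm_f^+(x),x_0)\leq\sum_{n\geq 0}\dist(p_{n+1},p_n)\leq \sum_{n\geq 0} c\mu^{(n+1)m}(2\delta L_m)=\frac{2c\mu^m\delta L_m}{1-\mu^m},
\]
and the same bound holds for $\dist(\shm_f^-(x),x_0)$. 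Shrinking $\delta$ if necessary so that $\tfrac{4c\mu^m\delta L_m}{1-\mu^m}\leq\gamma$, the triangle inequality ensures $(\shm_f^-(x),\shm_f^+(x))\in\Delta_\gamma$, and $\shm_f$ is continuous as a composition of continuous maps.

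Second, I would verify the fixed-orbit axiom by direct computation. When $x=\orb_f(p)$ the jumps $\delta_i(x)$ all vanish, so $f^m(q_0)=f^m(x_0)=x_m$ and $q_1=[x_m,x_m]=x_m$ using $[r,r]=r$. By induction $q_n=x_{nm}=f^{nm}(p)$ and hence $p_n=f^{-nm}(q_n)=p$ for every $n\geq 0$, giving $\shm_f^+(\orb_f(p))=p$. Applying the same reasoning to $f^{-1}$ yields $\shm_f^-(\orb_f(p))=p$, and therefore $\shm_f(\orb_f(p))=[p,p]=p$.

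The only delicate step is the a priori estimate guaranteeing that the outermost bracket is defined on the range of $(\shm_f^-,\shm_f^+)$; after that, continuity is automatic and the identity on real orbits is just a bookkeeping exercise using $[r,r]=r$ and the degeneracy of the recursion on orbits with zero jumps.
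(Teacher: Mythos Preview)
Your proof is correct and follows essentially the same route as the paper's: the same induction showing $q_n=f^{nm}(p)$ and $p_n=p$ on true orbits, and the same use of $[r,r]=r$ to conclude. You are in fact more careful than the paper in one respect, explicitly verifying that $(\shm_f^-(x),\shm_f^+(x))\in\Delta_\gamma$ so that the outermost bracket in \eqref{ecuShFromBr} is defined, a point the paper glosses over with ``It is clear that $\shm_f$ is continuous.''
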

\begin{proof}
	It is clear that $\shm_f$ is continuous.
	In order to conclude that it is a pseudo-orbit map we need to check
	$\shm_f(\orb_f(p))=p$ for any $p\in M$.
	Let $x=\orb_f(p)$. Then $x_i=f^i(p)$ for all $i\in\Z$.
	Thus, $p_0=q_0=x_0=p$.
	For $n\geq 0$ suppose that $p_n=p$ and $q_n=f^{nm}(p)$.
	Then $q_{n+1}=[f^m(q_n),x_{(n+1)m}]=[f^m(f^{nm}(p)),f^{(n+1)m}(p)]=f^{(n+1)m}(p)$. Then $p_{n+1}=p$. By induction we proved that $p_n=p$ for all $n\geq 0$.
	Thus, $p_n\to p$ and $\shm_f^+(x)=p$.
	Analogously, $\shm_f^-(x)=p$ and
	$\shm_f(\orb_f(p))
	=\shm_f(\con_f(p,p))
	=[\shm_f^-(\con_f(p,p)),\shm_f^+(\con_f(p,p))]=[p,p]=p$.
	This proves that $\shm_f$ is a pseudo-orbit map.
\end{proof}

\subsubsection{Proving the self-tuning property}
\label{secBowenBr}
In this section we will show that the pseudo-orbit map of Equation \eqref{ecuShFromBr} has self-tuning. First we prove a series of lemmas. The difference between what we do and Bowen's estimates is that for us it is not enough to have a fixed $\delta$ for $x$, instead we need to know how much is $x$ jumping at each $i\in\Z$ as we need to \textit{tune} the shadowing where the jumps from $f(x_i)$ to $x_{i+1}$ are small.

As in Lemma \ref{lemBowenWellDef} suppose that
$0<2\delta L_m\leq\gamma$ and $x$ is a $\delta$-orbit.
Define
\[
\delta_l^m(x)=L^m\sum_{j=1}^m\delta_{(l-1)m+j}(x).
\]

\begin{lem}
	\label{lemBowenWellDef2}
	For all $u\geq 1$
	\begin{equation}
		\label{eculemBowenWellDef2}
		\dist(f^m(q_{u-1}),x_{um})
		\leq
		\sum_{l=1}^u\frac{\delta_l^m(x)}{2^{u-l}}.
	\end{equation}
\end{lem}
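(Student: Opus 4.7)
The plan is to prove the inequality by induction on $u$, mirroring the structure of Lemma \ref{lemBowenWellDef} but tracking the local jump data $\delta_l^m(x)$ instead of using the uniform bound $\delta$. The hyperbolic contraction supplies a factor of $c\mu^m\leq 1/(2c)\leq 1/2$ at each step, which is exactly what produces the geometric factors $1/2^{u-l}$.

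For the base case $u=1$, we have $q_0=x_0$, so by Lemma \ref{lemShChoto} applied to $x$ on the indices $1,\dots,m$,
\[
\dist(f^m(q_0),x_m)=\dist(f^m(x_0),x_m)\leq L^m\sum_{j=1}^m\delta_j(x)=\delta_1^m(x),
\]
which matches the right-hand side of \eqref{eculemBowenWellDef2} when $u=1$.

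For the inductive step, assume the inequality holds for some $u\geq 1$ and estimate
\[
\dist(f^m(q_u),x_{(u+1)m})\leq \dist(f^m(q_u),f^m(x_{um}))+\dist(f^m(x_{um}),x_{(u+1)m}).
\]
The second term is handled by applying Lemma \ref{lemShChoto} to the shifted sequence $\shift^{um}(x)$ on its first $m$ coordinates, giving
\[
\dist(f^m(x_{um}),x_{um+m})\leq L^m\sum_{j=1}^m\delta_{um+j}(x)=\delta_{u+1}^m(x).
\]
For the first term, since $q_u=[f^m(q_{u-1}),x_{um}]$ the hyperbolic contraction of the bracket yields
\[
\dist(f^m(q_u),f^m(x_{um}))\leq c\mu^m\,\dist(f^m(q_{u-1}),x_{um})\leq \tfrac{1}{2}\,\dist(f^m(q_{u-1}),x_{um}),
\]
using $c\geq 1$ and \eqref{ecuBowenCondM}. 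Plugging in the inductive hypothesis and combining both estimates gives
\[
\dist(f^m(q_u),x_{(u+1)m})\leq \tfrac{1}{2}\sum_{l=1}^u\frac{\delta_l^m(x)}{2^{u-l}}+\delta_{u+1}^m(x)=\sum_{l=1}^{u+1}\frac{\delta_l^m(x)}{2^{u+1-l}},
\]
which closes the induction.

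There is no real obstacle here; the statement is essentially designed so that the contraction factor $c\mu^m\leq 1/(2c)$ times the bracket hyperbolicity cleanly produces the halving at each step, while Lemma \ref{lemShChoto} contributes one fresh block $\delta_{u+1}^m(x)$. The only small bookkeeping point is making sure the Lipschitz estimate is applied to the correct window $[um+1,um+m]$ of indices so that its output is exactly $\delta_{u+1}^m(x)$ rather than a uniform bound in $\delta$.
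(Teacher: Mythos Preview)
Your proof is correct and follows essentially the same approach as the paper's: induction on $u$, the same triangle-inequality splitting at $f^m(x_{um})$, the hyperbolic contraction bound $c\mu^m\leq 1/(2c)\leq 1/2$ for the first term, and Lemma~\ref{lemShChoto} on the block $[um+1,(u+1)m]$ to obtain $\delta_{u+1}^m(x)$ for the second.
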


\begin{proof}
	Arguing by induction, notice that $u=1$ implies $l=1$ and
	we have to prove
	$$
	\dist(f^m(q_{0}),x_{m})
	\leq \delta_1^m(x)
	$$
	and this follows from \eqref{ecuShChoto2} with $u=m$. Now assume that
	\eqref{eculemBowenWellDef2} holds for some $u\geq 1$. For $u+1$ we have
	\begin{align*}
		\dist(f^m(q_u),x_{(u+1)m}) & \leq
		\dist(f^m(q_u),f^m(x_{um}))
		+
		\dist(f^m(x_{um}),x_{(u+1)m})\\
		& \leq \frac{1}{2}\dist(f^m(q_{u-1}),x_{um})
		+
		\delta_{u+1}^m(x)\\
		& \leq
		\frac12 \sum_{l=1}^u\frac{\delta_l^m(x)}{2^{u-l}}
		+
		\delta_{u+1}^m(x)
		=
		\sum_{l=1}^u\frac{\delta_l^m(x)}{2^{u+1-l}}
		+
		\frac{\delta_{u+1}^m(x)}{2^0}=\sum_{l=1}^{u+1}\frac{\delta_l^m(x)}{2^{u+1-l}}\\
	\end{align*}
	and the proof ends.
\end{proof}

\begin{lem}
	\label{lemBowenShdowing2}
	If $n\geq 1$, $i\in[0,nm]$ and
	$sm\leq i<(s+1)m$ then
	\[
	\left\{
	\begin{array}{rl}
		\dist(f^i(p_n),f^{i-sm}(q_s))         &\leq \frac{4c}{3}
			\sum_{l=1}^n\frac{\delta_l^m(x)}{2^{|l-s-1|}},\\
		\dist(f^{i-sm}(q_s),f^{i-sm}(x_{sm})) &\leq 2c \sum_{l=1}^s\frac{\delta_l^m(x)}{2^{s+1-l}},\\
		\dist(f^{i-sm}(x_{sm}),x_i)           &\leq \delta_{s+1}^m(x)
	\end{array}
	\right.
	\]
	and
	\begin{equation}
		\label{ecucotap_n}
	\dist(f^i(p_n),x_i)\leq \kappa \sum_{l=1}^\infty\frac{\delta_l^m(x)}{2^{|l-s-1|}}
	\end{equation}
	where $\kappa=\frac{10}{3}c+1$.
\end{lem}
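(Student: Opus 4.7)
The three displayed inequalities should be established one at a time and then combined by the triangle inequality; the first of them is the technical core of the proof, so I would defer it and dispatch the other two first.

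The third inequality is a direct Lipschitz estimate: since $0 \leq i-sm < m$, applying \eqref{ecuShChoto2} to the shifted $\delta$-orbit $(x_{sm+j})_{j}$ and enlarging the sum from $i-sm$ to $m$ gives
\[ \dist(f^{i-sm}(x_{sm}), x_i) \leq L^{i-sm}\sum_{j=1}^{i-sm}\delta_{sm+j}(x) \leq L^{m}\sum_{j=1}^{m}\delta_{sm+j}(x) = \delta_{s+1}^m(x). \]
For the second inequality, the case $s=0$ is trivial ($q_0=x_0$ and the right-hand sum is empty). For $s\geq 1$ I would use $q_s=[f^m(q_{s-1}),x_{sm}]$ and the $(c,\mu)$-hyperbolic contraction in forward time (applicable since $i-sm\geq 0$):
\[ \dist(f^{i-sm}(q_s), f^{i-sm}(x_{sm})) \leq c\mu^{i-sm}\dist(f^m(q_{s-1}), x_{sm}) \leq c\sum_{l=1}^{s}\frac{\delta_l^m(x)}{2^{s-l}}, \]
using $\mu<1$ and Lemma \ref{lemBowenWellDef2}. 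Rewriting $2^{-(s-l)}=2\cdot 2^{-(s+1-l)}$ yields the stated constant $2c$.

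For the first inequality I would write $f^{i}(p_n)=f^{i-nm}(q_n)$ and telescope through the intermediate points $f^{i-tm}(q_t)$:
\[ \dist(f^{i}(p_n), f^{i-sm}(q_s)) \leq \sum_{t=s+1}^{n} \dist(f^{i-tm}(q_t), f^{i-tm}(f^m(q_{t-1}))). \]
For each $t\geq s+1$, $k:=tm-i\geq 1$ (since $i<(s+1)m\leq tm$), so Lemma \ref{lemContHypPastLemmaBowen} together with Lemma \ref{lemBowenWellDef2} gives
\[ \dist(f^{i-tm}(q_t), f^{i-tm}(f^m(q_{t-1}))) \leq c\mu^{tm-i}\sum_{l=1}^{t}\frac{\delta_l^m(x)}{2^{t-l}}. \]
Now the standing condition $c\mu^m\leq 1/(2c)$ together with $c\geq 1$ implies $\mu^m\leq 1/2$, and since $tm-i\geq (t-s-1)m$ we obtain the crucial estimate $c\mu^{tm-i}\leq c\cdot 2^{-(t-s-1)}$. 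Interchanging the two summations and splitting the outer sum at $l=s+1$ produces two geometric tails in $t$, each bounded by $\tfrac{4}{3}\cdot 2^{-|l-s-1|}$; this delivers exactly the stated constant $4c/3$.

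Adding the three bounds by triangle inequality gives \eqref{ecucotap_n}: the weight $2^{-(s+1-l)}$ appearing in (2) equals $2^{-|l-s-1|}$ for $l\leq s$, the bound (3) is precisely the $l=s+1$ term ($2^{-|s+1-s-1|}=1$), and (1) already carries the correct weight; hence all three fit inside the single series $\sum_{l=1}^\infty\delta_l^m(x)/2^{|l-s-1|}$ with total constant $\tfrac{4c}{3}+2c+1=\tfrac{10c}{3}+1=\kappa$. The main obstacle is the bookkeeping in the first inequality: one must use $c\mu^m\leq 1/(2c)$ to dominate the backward iterates against the $2^{-(t-l)}$ weights from Lemma \ref{lemBowenWellDef2}, and verify that after interchanging and splitting, both regimes $l\leq s+1$ and $l>s+1$ collapse to the symmetric weight $2^{-|l-s-1|}$ required on the right-hand side.
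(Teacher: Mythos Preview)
Your proposal is correct and follows essentially the same route as the paper: the telescoping of $f^i(p_n)=f^{i-nm}(q_n)$ through the $f^{i-tm}(q_t)$, the use of the backward hyperbolic contraction together with Lemma~\ref{lemBowenWellDef2} to bound each piece by $c\mu^{tm-i}\sum_{l=1}^t\delta_l^m(x)/2^{t-l}$, the replacement $\mu^{tm-i}\leq 2^{-(t-s-1)}$ via $\mu^m\leq 1/2$, and the split of the double sum at $l=s$ versus $l\geq s+1$ with the $4/3$ geometric factor are exactly what the paper does. Your treatments of the second and third inequalities and of the final combination into \eqref{ecucotap_n} also match the paper's argument (indeed you make the fit of the three pieces into the single series with constant $\kappa$ more explicit than the paper does).
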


\begin{proof}
	(First inequality). The hyperbolic contraction and Lemma \ref{lemBowenWellDef2} with $u=t$ imply
	\begin{equation}
		\label{ecuContHypPastLemmaBowen2}
		\begin{array}{rl}
			\dist(f^{-k}(q_t),f^{-k}(f^m(q_{t-1})))
			& \displaystyle\leq
			c\mu^k\dist(f^m(q_{t-1}),x_{tm})
			\leq c\mu^k
			\sum_{l=1}^t\frac{\delta_l^m(x)}{2^{t-l}}.
		\end{array}
	\end{equation}
	whenever $1\leq t\leq k$.
	The triangle inequality gives
	\begin{align*}
		\dist(f^i(p_n),f^{i-sm}(q_s))
		= \dist(f^{i-nm}(q_n),f^{i-sm}(q_s))
		&\leq \sum_{t=s+1}^n\dist(f^{i-tm}(q_t),f^{i-(t-1)m}(q_{t-1}))\\
		& = \sum_{t=s+1}^n\dist(f^{i-tm}(q_t),f^{i-tm}(f^m(q_{t-1})).
	\end{align*}
	From \eqref{ecuContHypPastLemmaBowen2} for  $k=tm-i\geq 0$ we have

	\begin{align*}
		\dist(f^{i-tm}(q_t),f^{i-tm}(f^m(q_{t-1}))
		\leq
		c\mu^{tm-i}\sum_{l=1}^t\frac{\delta_l^m(x)}{2^{t-l}}
		\leq c\mu^{(t-s-1)m}\sum_{l=1}^t\frac{\delta_l^m(x)}{2^{t-l}}
	\end{align*}
because $tm-i>(t-s-1)m$.
From \eqref{ecuBowenCondM} $\mu^m\leq 1/2$ and
$\mu^{(t-s-1)m}\leq \frac{1}{2^{t-s-1}}$.
Then
\[
\dist(f^{i-tm}(q_t),f^{i-tm}(f^m(q_{t-1}))
\leq
 c\frac{1}{2^{t-s-1}}\sum_{l=1}^t\frac{\delta_l^m(x)}{2^{t-l}}.
\]
Consequently
\[
	\dist(f^i(p_n),f^{i-sm}(q_s))
	\leq
	\sum_{t=s+1}^n\sum_{l=1}^t
	c\frac{1}{2^{t-s-1}}\frac{\delta_l^m(x)}{2^{t-l}}.
\]
We transform de double sum as follows
$$\sum_{t=s+1}^n\sum_{l=1}^t=
\sum_{t=s+1}^n\sum_{l=1}^{s}
+
\sum_{t=s+1}^n\sum_{l=s+1}^t
=
\sum_{t=s+1}^n\sum_{l=1}^{s}
+
\sum_{l=s+1}^n\sum_{t=l}^n.$$
For the first sum
	\begin{align*}
	\sum_{t=s+1}^n\sum_{l=1}^{s}\frac{1}{2^{t-s-1}}\frac{\delta_l^m(x)}{2^{t-l}}
	&=\frac{1}{2^{-s-1}}
	\sum_{t=s+1}^n\frac{1}{2^{2t}}
	\sum_{l=1}^{s}\frac{\delta_l^m(x)}{2^{-l}}\\
	&=
	\frac{1}{2^{-s-1}}
	\frac43\left(
	\frac{1}{4^{s+1}}-\frac{1}{4^{n+1}}
	\right)
	\sum_{l=1}^{s}\frac{\delta_l^m(x)}{2^{-l}}
	 < \frac{4}3
	\sum_{l=1}^{s}\frac{\delta_l^m(x)}{2^{s+1-l}}.
	\end{align*}
For the second sum
	\begin{align*}
	\sum_{t=s+1}^n
	\sum_{l=s+1}^t\frac{1}{2^{t-s-1}}\frac{\delta_l^m(x)}{2^{t-l}}
		& =
		\sum_{l=s+1}^n
		\sum_{t=l}^n
		\frac{1}{2^{t-s-1}}
		\frac{\delta_l^m(x)}{2^{t-l}}
		=
		\sum_{l=s+1}^n
		\frac{\delta_l^m(x)}{2^{-l-s-1}}
		\sum_{t=l}^n
		\frac{1}{2^{2t}}\\
		& =
		\sum_{l=s+1}^n
		\frac{\delta_l^m(x)}{2^{-l-s-1}}
		\frac43\left(\frac{1}{4^l}-\frac{1}{4^{n+1}}\right)
		<
		\frac43\sum_{l=s+1}^n
		\frac{\delta_l^m(x)}{2^{-l-s-1}}
		\left(\frac{1}{4^l}\right)\\
	 & =\frac43\sum_{l=s+1}^n\frac{\delta_l^m(x)}{2^{l-s-1}}.
	\end{align*}
	Then
	\[
	\dist(f^i(p_n),f^{i-sm}(q_s))
	\leq
	\frac{4c}{3}\sum_{l=1}^n\frac{\delta_l^m(x)}{2^{|l-s-1|}}.
	\]
	(Second inequality). Notice that
	$i-sm\geq0$ and again the hyperbolic contraction and Lemma \ref{lemBowenWellDef2} give
	\begin{align*}
		\dist(f^{i-sm}(q_s),f^{i-sm}(x_{sm}))
		& \leq c\mu^{i-sm}\dist(f^m(q_{s-1}),x_{sm})
		\leq c \sum_{l=1}^s\frac{\delta_l^m(x)}{2^{s-l}}
		\\
	\end{align*}
	(Third inequality). We will apply \eqref{ecuShChoto2}. From our hypothesis we know $0\leq i-sm<m$. Then
	\[
	\dist(f^{i-sm}(x_{sm}),x_i)
	=\dist(f^{i-sm}(x_{sm}),x_{i-sm+sm})\leq
	\sum_{j=1}^{i-sm}\delta_{sm+j}(x)L^{i-sm-j}
	\leq
	L^m\sum_{j=1}^{i-sm}\delta_{sm+j}(x)\leq\delta_{s+1}^m(x)
	\]
	(Last inequality). The inequality \eqref{ecucotap_n} follows from the triangle inequality on the first three.
\end{proof}

\begin{rmk}
	\label{rmkLipsSh}
	Notice that
	Equation \eqref{ecucotap_n}
	implies the Lipschitz shadowing, \textit{i.e.} there is $K>0$ such that every $K\epsilon$-orbit $x$ is $\epsilon$-shadowed by $\shm_f(x)$.
\end{rmk}

\begin{lem}
	\label{lemOrbPoCotas}
	If $1\leq j\leq n$ then
	\[
	\delta_j(x)\leq L\dist(f^{j-1}(x_0),x_{j-1})+\dist(f^j(x_0),x_j)
	\leq (L+1)\max_{1\leq j\leq n}\dist(f^j(x_0),x_j).
	\]
\end{lem}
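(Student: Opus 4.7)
The plan is to prove this via a single application of the triangle inequality followed by Lipschitz continuity of $f$, with no substantive obstacle beyond careful bookkeeping of the index $j=1$ corner case.

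First I would unfold the definition $\delta_j(x) = \dist(f(x_{j-1}), x_j)$ and insert the intermediate point $f^j(x_0) = f(f^{j-1}(x_0))$, so that the triangle inequality yields
\[
\delta_j(x) \leq \dist\bigl(f(x_{j-1}), f(f^{j-1}(x_0))\bigr) + \dist\bigl(f^j(x_0), x_j\bigr).
\]
The first term is then bounded using the Lipschitz constant $L$ of $f$, giving $L\,\dist(f^{j-1}(x_0), x_{j-1})$, which establishes the first inequality in the statement.

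For the second inequality, I would observe that each of the two distances on the right is at most $\max_{1\leq k\leq n}\dist(f^k(x_0), x_k)$. For $\dist(f^j(x_0),x_j)$ this is immediate from the hypothesis $1\leq j\leq n$. For $\dist(f^{j-1}(x_0),x_{j-1})$, when $j=1$ the quantity is $\dist(x_0,x_0)=0$, trivially bounded; and when $2\leq j\leq n$ the index $j-1$ lies in $\{1,\dots,n-1\}\subset\{1,\dots,n\}$ so the bound by the maximum applies. Summing the two bounds gives the factor $(L+1)$.

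The only subtlety, which is hardly an obstacle, is the asymmetry introduced by the index $j-1$ possibly being $0$; since the corresponding distance vanishes, nothing needs to be adjusted. No earlier result from the paper is required beyond the standing assumption that $f$ is Lipschitz with constant $L$, already invoked throughout \S\ref{secHyperbolic brackets}.
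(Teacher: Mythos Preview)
Your proof is correct and follows essentially the same approach as the paper: triangle inequality through the point $f^j(x_0)$, then the Lipschitz bound, then dominating each term by the maximum. You are in fact slightly more explicit than the paper about the $j=1$ case, where $\dist(f^{j-1}(x_0),x_{j-1})=0$.
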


\begin{proof}
	For any $j=1,\dots,n$ we have
	\begin{align*}
		\delta_j(x)=\dist(f(x_{j-1}),x_j)
		&\leq\dist(f(x_{j-1}),f^j(x_0))+\dist(f^j(x_0),x_j)		\\
		&\leq L \dist(x_{j-1},f^{j-1}(x_0))+\dist(f^j(x_0),x_j)\\
		&\leq (L+1)\max\{\dist(x_{j-1},f^{j-1}(x_0)),\dist(f^j(x_0),x_j)\}\\
		&\leq (L+1)\max_{1\leq j\leq n}\dist(f^j(x_0),x_j).
	\end{align*}
	This proves both inequalities.
\end{proof}

\begin{thm}
	\label{thmalmshinvshmbracket}
	The pseudo-orbit map induced by a hyperbolic bracket has self-tuning.
\end{thm}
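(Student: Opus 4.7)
By Proposition~\ref{propCharAlmShInvShm}, it suffices to show that for each $\epsilon'>0$ there is $\gamma'>0$ so that if $\tdist_s(\shift^i(x),\orb_f(x_i))<\gamma'$ for some $x\in \tM(f,\delta)$ and $i\in\Z$, then $\dist(f^i(\shm_f(x)),x_i)\leq\epsilon'$. The cases $i\geq 0$ and $i\leq 0$ are symmetric---applying the whole Bowen-type construction to $f^{-1}$ handles the latter via $\shm_f^-=\shm_{f^{-1}}^+$---so the plan is to treat $i\geq 0$ and write $i=sm+r$ with $0\leq r<m$, $s\geq 0$.

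The starting point is the triangle inequality combined with the identity $\shm_f(x)=[\shm_f^-(x),\shm_f^+(x)]$:
\[
\dist(f^i(\shm_f(x)),x_i)\leq \dist(f^i(\shm_f(x)),f^i(\shm_f^+(x)))+\dist(f^i(\shm_f^+(x)),x_i).
\]
The first summand is bounded by $c\mu^i\dist(\shm_f^-(x),\shm_f^+(x))$ via the hyperbolic contraction of the bracket. For the second, I would pass to the limit $n\to\infty$ in Lemma~\ref{lemBowenShdowing2} (justified by the uniform convergence supplied by Lemma~\ref{lemConvUnifShm}) to obtain
\[
\dist(f^i(\shm_f^+(x)),x_i)\leq \kappa\sum_{l=1}^\infty\frac{\delta_l^m(x)}{2^{|l-s-1|}}.
\]

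The heart of the argument is to control this last sum from the local hypothesis $\tdist_s(\shift^i(x),\orb_f(x_i))<\gamma'$, which yields $\dist(x_{i+k},f^k(x_i))\leq\gamma'\mu^{-|k|}$ for each $k\in\Z$. Applying Lemma~\ref{lemOrbPoCotas} to the shifted sequence $\shift^i(x)$ gives $\delta_{i+k}(x)\leq (L+1)\gamma'\mu^{-\max(|k|,|k-1|)}$; since $(l-1)m+j-i$ for $1\leq j\leq m$ ranges over an interval contained in $[(l-s-2)m+1,(l-s)m]$, one gets $\delta_l^m(x)\leq L^m m(L+1)\gamma'\mu^{-((|l-s-1|+1)m+1)}$. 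A dyadic split $|l-s-1|\leq N$ vs.\ $|l-s-1|>N$ then does the job: the tail is absorbed into $\epsilon'/(4\kappa)$ by choosing $N$ large (using the universal bound $\delta_l^m(x)\leq L^m m\delta$ and the geometric decay $2^{-|l-s-1|}$), and then the near part is absorbed into $\epsilon'/(4\kappa)$ by shrinking $\gamma'$. Hence $\kappa\sum_l\delta_l^m(x)/2^{|l-s-1|}\leq\epsilon'/2$.

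It remains to control the bracket term $c\mu^i\dist(\shm_f^-(x),\shm_f^+(x))$, and this is the main piece of bookkeeping. Applying Lemma~\ref{lemBowenShdowing2} at $i=0$ to both $\shm_f^+$ and its $f^{-1}$-analog $\shm_f^-$ gives a universal bound $\dist(\shm_f^-(x),\shm_f^+(x))\leq C\delta$ with $C$ depending only on $c,\mu,L,m,\kappa$. Choose $I_0$ with $c\mu^{I_0}C\delta\leq\epsilon'/2$: for $i\geq I_0$ the bracket term is already $\leq\epsilon'/2$ and the proof closes. For the finitely many values $0\leq i<I_0$, one has $s<I_0/m$, so the sums $\sum_l\delta_l^m(x)/2^{|l-1|}$ (which bounds $\dist(\shm_f^-(x),\shm_f^+(x))$) and $\sum_l\delta_l^m(x)/2^{|l-s-1|}$ have centers within $\lceil I_0/m\rceil$ of one another; enlarging the cutoff to $N+\lceil I_0/m\rceil$ and shrinking $\gamma'$ further allows the same local estimate to dominate both sums simultaneously. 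The main obstacle is precisely this reconciliation of the two regimes of $i$---the exponential-decay regime where $\mu^i$ does the work, and the bounded regime where one instead propagates the local hypothesis through a slightly wider dyadic window---but no new idea beyond the split already in place is required.
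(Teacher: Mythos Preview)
Your proof is correct and follows essentially the same route as the paper: reduce via Proposition~\ref{propCharAlmShInvShm}, pass to the limit in Lemma~\ref{lemBowenShdowing2}, split the sum $\sum_l \delta_l^m(x)/2^{|l-s-1|}$ into a tail controlled by the universal bound $\delta_l^m\leq mL^m\delta$ and a near part controlled by the local hypothesis through Lemma~\ref{lemOrbPoCotas}, and handle the bracket term $c\mu^i\dist(\shm_f^-(x),\shm_f^+(x))$ in two regimes ($i$ large via $\mu^i$-decay, $i$ bounded via the local hypothesis again with a widened cutoff). The paper organizes the bookkeeping slightly differently---fixing a single cutoff $\hat l$ for all three sums and then a threshold $\hat s$ for $i$---but the ideas and the order of quantifiers are the same.
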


\begin{proof}
We will apply Proposition \ref{propCharAlmShInvShm} for $i\geq 0$ (the case $i<0$ is analogous).
	Taking limit as $n\to+\infty$ in \eqref{ecucotap_n} we have
	\begin{equation}
		\label{ecushmas}
	\dist(f^i(\shm_f^+(x)),x_i)\leq \kappa \sum_{l=1}^\infty\frac{\delta_l^m(x)}{2^{|l-s-1|}}
	\end{equation}

	for all $i\geq 0$.
	For $\shm_f^-$ we have
	\begin{equation}
		\label{ecushmenos}
		\dist(f^{-i}(\shm_f^-(x)),x_{-i})\leq \kappa \sum_{l=1}^\infty\frac{\delta_l^m(\hat x)}{2^{|l-s-1|}}
	\end{equation}
	for all $i\geq 0$, where $\hat x$ is the pseudo-orbit defined as $\hat x_i=x_{-i}$ for all $i\in\Z$.
	By the hyperbolic contraction of the bracket we have
	\begin{align*}
		\dist(f^i(\shm_f(x)),f^i(\shm^+_f(x)))
		&\leq c\mu^i\dist(\shm^-_f(x),\shm^+_f(x))),\\
		\dist(f^{-i}(\shm_f(x)),f^{-i}(\shm^-_f(x)))
		&\leq c\mu^i\dist(\shm^-_f(x),\shm^+_f(x)))\\
	\end{align*}
	for all $i\geq 0$.
	Putting $i=0$ (and $s=0$) in Equations \eqref{ecushmas} and \eqref{ecushmenos} we have
	\begin{align*}
		\dist(\shm_f^+(x),x_0)\leq \kappa \sum_{l=1}^\infty\frac{\delta_l^m(x)}{2^{|l-1|}}\\
		\dist(\shm_f^-(x),x_0)\leq \kappa \sum_{l=1}^\infty\frac{\delta_l^m(\hat x)}{2^{|l-1|}}.
	\end{align*}
	Therefore, for $i\geq 0$
	\begin{align*}
		\dist(f^i(\shm_f(x)),x_i)
		&\leq
		\dist(f^i(\shm_f(x)),f^i(\shm_f^+(x)))+
		\dist(f^i(\shm_f^+(x)),x_i)
		\\
		&\leq
		c\mu^i\kappa
		\left(
		\sum_{l=1}^\infty\frac{\delta_l^m(x)}{2^{|l-1|}}
		+
		\sum_{l=1}^\infty\frac{\delta_l^m(\hat x)}{2^{|l-1|}}
		\right)
		+
		\kappa \sum_{l=1}^\infty\frac{\delta_l^m(x)}{2^{|l-s-1|}}
	\end{align*}
	with analogous bound for
	$\dist(f^{-i}(\shm_f(x)),x_{-i})$.

	In order to apply Proposition \ref{propCharAlmShInvShm} consider $\epsilon'>0$ given.
	Take $\hat l\geq 1$ such that
	\begin{align*}
	c\mu^i\kappa
	\left(
	\sum_{|l-1|\geq\hat l}\frac{\diam(M)}{2^{|l-1|}}
	+
	\sum_{|l-1|\geq\hat l}\frac{\diam(M)}{2^{|l-1|}}
	\right)
	+
	\kappa \sum_{|l-s-1|\geq\hat l}\frac{\diam(M)}{2^{|l-s-1|}}\\
	\leq 3c\kappa\diam(M)\sum_{|t|\geq\hat l}\frac{1}{2^{|t|}}
	\leq\epsilon'/2
	\end{align*}
	Take $\hat s$ such that if $i\geq \hat sm$ then
	\[
	c\mu^i\kappa
	\left(
	\sum_{|l-1|<\hat l}\frac{\diam(M)}{2^{|l-1|}}
	+
	\sum_{|l-1|<\hat l}\frac{\diam(M)}{2^{|l-1|}}
	\right)
	\leq \epsilon'/4.
	\]
	For $0\leq sm\leq i<\hat s m$, we can apply Lemma \ref{lemOrbPoCotas} to obtain $\gamma'$ such that if
	$\tdist_s(\shift^i(x),\orb_f(x_i))<\gamma'$ then
	\[
	c\kappa
	\left(
	\sum_{|l-1|<\hat l}\frac{\delta_l^m(x)}{2^{|l-1|}}
	+
	\sum_{|l-1|<\hat l}\frac{\delta_l^m(\hat x)}{2^{|l-1|}}
	\right)\leq \epsilon'/4.
	\]
	Also by Lemma \ref{lemOrbPoCotas} we can assume that $\gamma'$ is so small that
	\[
		\kappa \sum_{|l-s-1|<\hat l}\frac{\delta_l^m(x)}{2^{|l-s-1|}}\leq\epsilon'/4.
	\]
	Then $\dist(f^i(\shm_f(x)),x_i)\leq\epsilon'$ for all $i\geq 0$. The case $i<0$ is analogous and the proof ends.
\end{proof}

\subsubsection{Mutual inductions}
In this section we show that, after solving a technical problem, the bracket induced by the induced shadowing map is the original bracket.
That is, if
 $x=\con_f(p,q)$ for some $p,q\in M$, we consider the question: does $\shm_f(x)=[p,q]$?



\begin{prop}
	\label{propshm+}
	If $x=\con_f(p,q)$ then $\shm_f^+(x)=q$.
\end{prop}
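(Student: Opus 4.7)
The plan is to compute the defining sequences $q_n, p_n$ from \eqref{ecuBowTechDef} explicitly when $x=\con_f(p,q)$, and observe that the hyperbolic bracket is never actually ``needed'' (it is only evaluated at the diagonal), so the whole construction collapses to the constant sequence $p_n=q$.

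First I note that for $i\geq 0$ we have $x_i=f^i(q)$ by the definition of the connecting map; in particular $x_0=q$ and $x_{nm}=f^{nm}(q)$ for every $n\geq 0$. The base case of \eqref{ecuBowTechDef} then gives $q_0=x_0=q$. Proceeding by induction on $n\geq 1$, assume $q_{n-1}=f^{(n-1)m}(q)$. Then
\[
f^m(q_{n-1})=f^m\bigl(f^{(n-1)m}(q)\bigr)=f^{nm}(q)=x_{nm},
\]
so the argument of the bracket lies on the diagonal $\Delta_0\subset\Delta_\gamma$, and property \textit{(1)} of a (shadowing) bracket yields
\[
q_n=[f^m(q_{n-1}),x_{nm}]=[f^{nm}(q),f^{nm}(q)]=f^{nm}(q),
\]
closing the induction. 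Consequently $p_n=f^{-nm}(q_n)=q$ for all $n\geq 0$, and taking the limit gives $\shm_f^+(x)=\lim_n p_n=q$.

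The only technical point to dispose of is whether $\shm_f^+$ is actually defined on $x=\con_f(p,q)$. This requires two things: that $x\in\tM(f,\delta)$ and that the hypothesis $2\delta L_m\leq\gamma$ of Lemma \ref{lemBowenWellDef} is in force, so that the recursion \eqref{ecuBowTechDef} can be applied. The first holds as soon as $(p,q)\in\Delta_\delta$, since the only possible jump of $\con_f(p,q)$ occurs between indices $-1$ and $0$ and equals $\dist(q,f(f^{-1}(p)))=\dist(p,q)\leq\delta$. Under the standing choice of $\delta$ used throughout \S\ref{secA pseudo-orbit map}, the second condition holds too. I expect no real obstacle here: since in our computation the bracket is evaluated only on the diagonal, the well-definedness argument is painless, and the main content of the proposition is simply the observation that the axiom $[r,r]=r$ forces the iterative construction to be trivial on genuine orbits of the form $\con_f(p,q)$.
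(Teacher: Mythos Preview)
Your proof is correct and essentially identical to the paper's: both argue by induction that $q_n=f^{nm}(q)$ (using $[r,r]=r$ on the diagonal), whence $p_n=q$ for all $n$ and $\shm_f^+(x)=\lim_n p_n=q$. Your additional paragraph on well-definedness is a reasonable remark the paper leaves implicit.
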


\begin{proof}
	We have $x_n=f^n(q)$ for all $n\geq 0$.
	Thus, $p_0=q_0=x_0=q$.
	For $n\geq 0$ suppose that $p_n=q$ and $q_n=f^{nm}(q)$.
	Then $q_{n+1}=[f^m(q_n),x_{(n+1)m}]=[f^m(f^{nm}(q)),f^{(n+1)m}(q)]=f^{(n+1)m}(q)$. Then $p_{n+1}=q$. By induction we proved that $p_n=q$ for all $n\geq 0$.
	Thus, $p_n\to q$ and $\shm_f^+(x)=q$.
\end{proof}

For $\shm_f^-$ there is a problem of simmetry as the 0-th coordinate of
$\con_f(p,q)$ is $q$ and not $p$. This can be changed by considering $\sigma^{-1}(\con_f(f(p),f(q)))$, whose 0-th coordinate is $p$.
Thus, we can define $\hat\shm_f^{-}(x)=\shm_{f^{-1}}^+(\shift^{-1}(f(x)))$ and
now $\hat\shm_f^{-}(\con_f(p,q))=p$. Consequently
\[
\hat\shm_f(x)=[\hat\shm_f^{-}(x),\shm_f^{+}(x)]
\]
is a shadowing map satisfying
$\hat\shm_f(\con_f(p,q))=[p,q]$.

\begin{rmk}
	For $f$ expansive $\shift$ and $f$ commutes and $\hat\shm_f=\shm_f$.
	Also, for $f$ expansive, the shadowing map is dynamically-invariant and $\shift$-invariant, as there is a unique shadowing map.
\end{rmk}

\begin{rmk}
	If the bracket is $f$-invariant then the induced shadowing map is dynamically-invariant. This is the case if $f$ is topologically hyperbolic.
\end{rmk}

%
%

\subsection{North-south dynamics}
\label{secNS}
In this section we construct a hyperbolic bracket for a north-south dynamics to conclude that it admits an almost shift invariant shadowing map.

Let $M$ be a sphere of dimension $d\geq 1$ and take $f\colon M\to M$ a north-south homeomorphism, \textit{i.e.} there are different fixed points $N,S\in M$
and for any $p\in M\setminus\{N,S\}$ $f^n(p)\to S$ and $f^{-n}(p)\to N$ as $n\to+\infty$.
To simplify the exposition we can assume that $f$ is a diffeomorphism, $N,S$ are hyperbolic fixed points, $M$ has a Riemannian metric for which there are
$r>0$ and $\mu\in (0,1)$ such that
\begin{equation}
	\label{ecuContHipNSPolos}
	\left.
	\begin{array}{l}
		\text{if } p,q\in B_r(S)\text{ then }\dist(f^n   (p),f^n   (q))\leq \mu^n\dist(p,q),\\
		\text{if } p,q\in B_r(N)\text{ then }\dist(f^{-n}(p),f^{-n}(q))\leq \mu^n\dist(p,q),
	\end{array}
	\right\} \text{ for all }n\geq 0.
\end{equation}
We assume $r$ so that $B_r(N)$ and $B_r(S)$ have disjoint closures.
Let $L>0$ be a Lipschitz constant for $f$ and $f^{-1}$.

To define the bracket consider a map $\varphi\colon M\to I=\{t\in\R:0\leq t\leq 1\}$\footnote{We do not use the standard notation $I=[0,1]$ to avoid confusion with the brackets we are considering.} such that
\begin{align*}
	\varphi(p)=1&\text{ if }p\in B_r(N),\\
	\varphi(p)=0&\text{ if }p\in B_r(S).
\end{align*}
Let $\rho>0$ be such that the exponential map $\exp_p\colon T_pM\to M$ is injective on $\{v\in T_pM:\|v\|\leq\rho\}$ for all $p\in M$.
If $\dist(p,q)\leq\rho$ define
$$[p,q]=\exp_p(\varphi(p)\exp_p^{-1}(q)).$$
The bracket chooses a point in the shortest geodesic segment from $p$ to $q$.
Notice that
\begin{align*}
	[p,q]=q &\text{ if }p\in B_r(N),\\
	[p,q]=p &\text{ if }p\in B_r(S),\\
\end{align*}
For $p,q\in B_r(N)$ we have
\begin{align*}
	\dist(f^{n}(q),f^{n}([p,q]))&=	\dist(f^{n}(q),f^{n}(q))=0\leq \mu^n\dist(p,q),\\
	\dist(f^{-n}(p),f^{-n}([p,q]))&=	\dist(f^{-n}(p),f^{-n}(q))\leq \mu^n\dist(p,q),\\
\end{align*}
for all $n\geq 0$.
Analogously, for $p,q\in B_r(S)$ we have
\begin{align*}
	\dist(f^{n}(q),f^{n}([p,q]))&=	\dist(f^{n}(q),f^{n}(p))\leq \mu^n\dist(p,q),\\
	\dist(f^{-n}(p),f^{-n}([p,q]))&=	\dist(f^{-n}(p),f^{-n}(p))=0\leq \mu^n\dist(p,q),\\
\end{align*}
for all $n\geq 0$.
Take $\delta>0$, smaller than $r$ and $\rho$ such that
if $$U=\{p\in M:\dist(p,N)>r-\delta\text{ and }\dist(p,S)>r-\delta\}$$
then $p,q\in U$ and $\dist(p,q)<\delta$ implies that the geodesic from $p$ to $q$ (\textit{via} the injective restriction of the exponential map) is disjoint from the $r/2$-balls centered at $N$ and $S$.
In this way for $p,q$ not in $B_r(N)$ nor in $B_r(S)$ but with $\dist(p,q)<\delta$ we have that:
\begin{itemize}
	\item $[p,q]$ is not in $B_{r/2}(N)\cup B_{r/2}(S)$;
	\item thus, we can take $u\geq 1$ such that
	$f^i(p),f^i(q),f^i([p,q])\in B_{r/2}(N)\cup B_{r/2}(S)$ whenever $|i|\geq u$;
	\item also, we can assume that
	\begin{equation}
		\label{ecuUNS}
		f^u(B_r(N))\cup f^{-u}(B_r(S))=M.
	\end{equation}
	This condition is used in the proof of Proposition \ref{propNoShInvNS};
	\item consequently
	\begin{equation}
		\left.
		\begin{array}{r}
			\dist(f^{n} (q),f^{n} ([p,q]))\leq (L^u/\mu^u) \mu^n\dist(p,q)\\
			\dist(f^{-n}(p),f^{-n}([p,q]))\leq (L^u/\mu^u) \mu^n\dist(p,q)\\
		\end{array}
		\right\}\text{ for all }n\geq 0.
	\end{equation}
\end{itemize}
Taking
\begin{equation}
	\label{ecuC'CorchNS}
	c=L^u/\mu^u
\end{equation}
we have that $[\cdot,\cdot]$ is a hyperbolic bracket.

\begin{rmk}
	This bracket cannot be $f$-invariant because by Proposition \ref{propExpDesdeCorchete} $f$ should have to be expansive.
\end{rmk}


\begin{que}
Does every structurally stable $C^1$ diffeomorphism
admit a hyperbolic bracket?
\end{que}

\begin{prop}
	\label{propNoShInvNS}
	The shadowing map
induced by the hyperbolic bracket for the north-south dynamics has self-tuning but is not shift-invariant.
\end{prop}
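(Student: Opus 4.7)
The self-tuning property is immediate from Theorem \ref{thmalmshinvshmbracket} applied to the bracket just constructed, which has hyperbolic contraction with parameters $c,\mu$ as in \eqref{ecuC'CorchNS}.

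For the failure of shift-invariance, my plan is to produce a $\delta$-pseudo-orbit $x$ with $\shm_f(\shift x)\ne f(\shm_f(x))$ by working inside the neutral zone $U$. First I would enlarge $m$ (the step parameter of the Bowen construction in \S\ref{secA pseudo-orbit map}) so that, in addition to $c\mu^m\le 1/(2c)$, the inclusions
\[
f^m(\overline U)\subset B_r(S),\qquad f^{-m}(\overline U)\subset B_r(N)
\]
hold. This is possible by the uniform convergence of orbits of $\overline U$ to $S$ (under $f$) and to $N$ (under $f^{-1}$), together with equation \eqref{ecuUNS} and the compactness of $\overline U$.

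The key technical step is to check that for every $\delta$-pseudo-orbit $x$ with $x_0\in U$ and $\delta$ small enough that the Bowen construction applies (Lemma \ref{lemBowenWellDef}), one has $\shm_f(x)=x_0$ \emph{exactly}. Indeed, the first bracket in the forward construction is
\[
q_1=[f^m(x_0),x_m],
\]
and since $f^m(x_0)\in B_r(S)$, the bracket definition forces $q_1=f^m(x_0)$; hence $p_1=f^{-m}(q_1)=x_0$, and by induction $p_n=x_0$ for all $n$, yielding $\shm_f^{+}(x)=x_0$. The symmetric computation for $\shm_f^{-}$, carried out with the $f^{-1}$-bracket (the swap of $[\cdot,\cdot]$, which selects the second argument when the first lies in $B_r(N)$) and using $f^{-m}(x_0)\in B_r(N)$, gives $\shm_f^{-}(x)=x_0$. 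Therefore $\shm_f(x)=[x_0,x_0]=x_0$.

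The counterexample is then direct. Choose $x_0\in U$ so that $f(x_0)\in U$ (possible by openness of $U$ and continuity of $f$, taking $x_0$ in the interior of $U\cap f^{-1}(U)$) and pick $x_1\in U$ with $\dist(x_1,f(x_0))\le\delta$ and $x_1\ne f(x_0)$. Define
\[
x_i=\begin{cases} f^i(x_0) & i\le 0,\\ f^{i-1}(x_1) & i\ge 1,\end{cases}
\]
a $\delta$-pseudo-orbit with a single jump at $i=0\to 1$. By the key step, $\shm_f(x)=x_0$, and applying the same step to the shifted pseudo-orbit $\shift(x)$ (whose $0$-th coordinate is $x_1\in U$) gives $\shm_f(\shift x)=x_1$. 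Shift-invariance would demand $x_1=f(\shm_f(x))=f(x_0)$, contradicting the choice of $x_1$.

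The main obstacle is the computation $\shm_f^{-}(x)=x_0$: the formal identity $\shm_f^-=\shm_{f^{-1}}^+$ requires a careful interpretation (since an $f$-pseudo-orbit need not be an $f^{-1}$-pseudo-orbit), but under the natural one in which the construction iterates the $f^{-1}$-hyperbolic bracket, every bracket in the sequence reduces to the ``$N$-side'' selection and $p_n$ collapses to $x_0$. Verifying this explicitly, tracking uniformity in $\delta$, is the technical heart of the argument.
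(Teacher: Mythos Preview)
Your route differs from the paper's: the paper takes $x=\con_f(p,q)$ with $p,q$ in the neutral zone and $\varphi(p)\in(0,1)$, uses Proposition~\ref{propshm+} to identify $\shm_f^+(x)=q$, and then shifts by $u$ (not by $1$) so that the base point falls into a polar ball and the final bracket collapses to $f^u(q)$. You instead enlarge the Bowen step $m$ so that $f^{\pm m}(\overline U)$ sit inside the polar balls and argue directly that $\shm_f(x)=x_0$ whenever $x_0\in U$; this is legitimate since the construction only asks that $m$ be large enough, and it buys you a very concrete description of $\shm_f$ on a large class of pseudo-orbits.

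There is, however, a concrete slip in your $\shm_f^-$ computation. The swap bracket $[a,b]'=[b,a]=\exp_b\bigl(\varphi(b)\exp_b^{-1}(a)\bigr)$ is governed by $\varphi(b)$, not $\varphi(a)$: it returns the \emph{first} argument $a$ when the \emph{second} argument $b$ lies in $B_r(N)$ (and returns $b$ when $b\in B_r(S)$). In the recursion $q_n^-=[f^{-m}(q_{n-1}^-),x_{-nm}]'=[x_{-nm},f^{-nm}(x_0)]$ the reduction to $f^{-nm}(x_0)$ therefore needs $x_{-nm}\in B_r(N)$, not $f^{-m}(x_0)\in B_r(N)$ as you write. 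For your single-jump pseudo-orbit $x$ this distinction is invisible because $x_{-nm}=f^{-nm}(x_0)$ exactly, and for $\shift x$ the required condition $(\shift x)_{-nm}=f^{-nm+1}(x_0)\in B_r(N)$ does follow from your hypothesis $f(x_0)\in U$ together with $f^{-m}(\overline U)\subset B_r(N)$ and the backward invariance $f^{-1}(B_r(N))\subset B_r(N)$. So the counterexample stands. Your general ``key step'' is also true, but its justification must run through $x_{-nm}\in B_r(N)$: the $f^{-1}$-analogue of Lemma~\ref{lemBowenWellDef} gives $\dist(x_{-nm},f^{-nm}(x_0))\le 2\delta L_m$, and combined with the compact containment $f^{-m}(\overline U)\subset B_r(N)$ this forces $x_{-nm}\in B_r(N)$ for $\delta$ small.
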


\begin{proof}
The shadowing map has self-tuning by Theorem \ref{thmalmshinvshmbracket}.
From Equation \eqref{ecuUNS} we know $f^u(B_r(N))\cup f^{-u}(B_r(S))=M$, thus for each $p\in M$ there is at most one integer $i$ such that $\varphi(f^{iu}(p))\in(0,1)$.
	Take $p,q\in M\setminus (B_r(N)\cup B_r(S))$ with $\dist(p,q)\leq \delta$, $p\neq q$ and $\varphi(p)\in(0,1)$.
	Let $x=\con_f(p,q)$.
	In this way, $\shm_f(x)\neq q$.
For $\shift^u(x)$
	we have $(\shift^u(x))_i=x_{u+i}=f^{u+i}(q)$.
	Thus, as in the proof of Proposition \ref{propshm+}, we have $\shm_f^+(x)=f^u(q)$.
Since $f^u(q)\in B_r(S)$ if $\delta$ is small enough we have
$\shm^{-}_f(\shift^u(x))\in B_r(S)$ and
$\varphi(\shm^{-}_f(\shift^u(x)))=1$.
This implies $[\shm^{-}_f(\shift^u(x)),\shm^{+}_f(\shift^u(x))]=f^u(q)$ and
$\shm_f(\shift^u(x))=f^u(q)$. We see from this that $\shm_f(\sigma^u(x))\neq f^u(\shm_f(x))$.
\end{proof}

\begin{que}
We do not know whether or not a north-south dynamics admits (another) shadowing map being shift-invariant.
\end{que}

For an arbitrary shadowing map we can say the following.

\begin{prop}
	Every shadowing map for a north-south dynamics is an
	L-shadowing map.
\end{prop}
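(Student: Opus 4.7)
My plan is to invoke the L-shadowing property of $f$ as a black box and then force $\shm_f(x)$ to coincide asymptotically with the L-shadow. The enabling facts are the rigidity of forward and backward limits in a north-south system (every orbit converges to a unique point of $\{N,S\}$ in each time direction) and that $\Omega(f)=\{N,S\}$ is finite, so $f|_{\Omega(f)}$ is trivially expansive.

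First I will record that, since $\shm_f$ is a shadowing map, $f$ has shadowing by Proposition \ref{propShmSH}; combined with the trivial expansivity of $f|_{\Omega(f)}$, the cited Proposition 4.1 of \cite{ACCV20} then gives that $f$ has the L-shadowing property. So for each $\epsilon>0$ there is $\gamma>0$ such that every two-sided limit $\gamma$-pseudo-orbit admits some two-sided limit $\epsilon$-shadow $q\in M$.

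Next I will fix $\epsilon$ with $\epsilon<\dist(N,S)/3$, shrinking $\gamma$ if needed so that $\shm_f$ is also an $\epsilon$-shadow for every $\gamma$-orbit. Given a two-sided limit $\gamma$-pseudo-orbit $x$, I set $p:=\shm_f(x)$ and take $q$ to be an L-shadow. The north-south dynamics yields unique limits $f^i(p)\to P^+$ and $f^i(q)\to Q^+$ in $\{N,S\}$ as $i\to+\infty$ (with analogous $P^-,Q^-$ as $i\to-\infty$). The key computation is the triangle inequality
\[
\dist(f^i(p),f^i(q))\leq \dist(f^i(p),x_i)+\dist(x_i,f^i(q))\leq \epsilon+o(1)\text{ as }i\to+\infty,
\]
which in the limit yields $\dist(P^+,Q^+)\leq\epsilon<\dist(N,S)$, forcing $P^+=Q^+$. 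Since $q$ is a two-sided limit shadow, $x_i\to Q^+=P^+$, and hence
\[
\dist(f^i(\shm_f(x)),x_i)\leq \dist(f^i(p),P^+)+\dist(P^+,x_i)\longrightarrow 0\text{ as }i\to+\infty.
\]
The symmetric argument with $P^-=Q^-$ handles $i\to-\infty$, finishing the verification that $\shm_f(x)$ two-sided limit shadows $x$, so $\shm_f$ is an L-shadowing map.

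The main obstacle is really only the opening step: recognising that $f$ already enjoys the L-shadowing property, which rests on the identification $\Omega(f)=\{N,S\}$ and its trivial expansivity. Once that is granted, the rest is a triangle-inequality comparison enabled by the rigidity of forward and backward limits in the north-south setting; no direct manipulation of the hyperbolic estimates near $N$ and $S$ is required.
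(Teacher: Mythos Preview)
Your proof is correct but follows a genuinely different route from the paper's. The paper argues directly: using the hyperbolic contraction estimates \eqref{ecuContHipNSPolos} near the poles, it shows that any two-sided limit $\delta$-pseudo-orbit $x$ must itself converge to $N$ or $S$ as $i\to\pm\infty$; since every $f$-orbit also converges to a pole, and the $\epsilon$-shadow $\shm_f(x)$ stays $\epsilon$-close to $x_i$, it must converge to the \emph{same} pole, giving the limit shadowing. You instead black-box the L-shadowing property via \cite{ACCV20}*{Proposition 4.1} (legitimate here, since $\Omega(f)=\{N,S\}$ is finite and hence $f|_{\Omega(f)}$ is trivially expansive), produce an abstract L-shadow $q$, and then compare $p=\shm_f(x)$ with $q$ using the rigidity of forward/backward limits in $\{N,S\}$ to force $p$ to limit-shadow as well. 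Your approach avoids the explicit pseudo-orbit analysis near the poles and in fact does not use the specific Lipschitz/hyperbolic setup of \S\ref{secNS} at all---it would apply to any north-south homeomorphism possessing a shadowing map. The paper's argument, by contrast, is more self-contained (no appeal to the external result from \cite{ACCV20}) and gives direct information about the asymptotics of the pseudo-orbit itself.
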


\begin{proof}
	Let $x$ be a two-sided limit $\delta$-orbit.
	We will show that $x_n$ congerges to $N$ or $S$ as $n\to+\infty$.
	From \eqref{ecuContHipNSPolos} we have that
	if $\dist(x_{n_0},S)\leq \frac{\delta}{1-\mu}$ then
	$\dist(x_{n},S)\leq \frac{\delta}{1-\mu}$ for all $n\geq n_0$.
	If $x_n$ does not converge to $N$, then there are $\epsilon>0$ and $n_k\to\infty$, $k\geq 1$, such that $\dist(x_{n_k},N)>\epsilon$ for all $k\geq 1$.
	As $x$ is two sided-limit we see that after $x_{n_k}$, for $k$ large,
	$x_{n_k+j}\in B_{\delta/(1-\mu)}(S)$ and
	$x_{n}\in B_{\delta/(1-\mu)}(S)$ for all $n\geq n_k+j$.
	This proves that $x_n\to S$.
	Since the orbits of $f$ also converges to $N$ or $S$, we see that any shadowing map is L-shadowing.
\end{proof}

\begin{rmk}
	\label{rmkKOP}
	The following definition was given in \cite{KOP} for Lipschitz maps (not necessarily invertible) of compact subsets of an Euclidean space.
	For $n\geq 1$ let
	\[
	M^n(f,\delta)=\{x\in\tM:\dist(f^j(x_{j-1}),x_j)\leq\delta\text{ for all }1\leq j\leq n\}.
	\]
	We say that $f$ is \textit{continuously shadowing} with positive parameters $k,\delta>0$
	if for each $n\geq 1$ there is a continuous function
	\[
	W^n\colon M^n(f,\delta)\to M
	\]
	such that
	\begin{equation}
		\label{ecuContShKop}
		\sup_{0\leq j\leq n}\dist(f^j(W^n(x)),x_j) \leq k \sup_{1\leq j\leq n}\dist(f^j(x_{j-1}),x_j).
	\end{equation}
	In \cite{KOP}*{Theorem 1} it is proved that
	if $f$ is a semi-hyperbolic map then it is continuously shadowing.
	This approach ir related to the Lipschitz shadowing observed in Remark \ref{rmkLipsSh}.
\end{rmk}

%
%
%

\section{Shift invariance}
\label{secShInvShm}
In this section we study shift-invariant shadowing maps.
In \S\ref{secElShift} we develop the shift dynamical system and show that it has a shift-invariant shadowing map.\footnote{In this sentence the word 'shift' appears with two different meanings. On the one hand, we consider it as a dynamical system, \textit{i.e.} $\shift$ takes the place of $f$.
On the other hand, the 'shift-invariance' is a property of shadowing maps for homeomorphisms; in this case the homeomorphism is a shift.}
In \S\ref{secTopoEst} we show that shift-invariance implies Walter's topological stability.
In \S\ref{secRecurrence} we prove some dynamical consequences concerning
chain recurrence, periodic points and the non-wandering set.
In \S\ref{secBracket with uniform contraction} we show that shift-invariant shadowing maps induce brackets with uniform contraction.

We start with the definition and remarks.

\begin{df}
	A pseudo-orbit map is \textit{shift-invariant} if for some $\delta>0$ the next diagram commutes
	\begin{equation}
		\label{ecuEquivariant}
		\begin{CD}
			\tM(f,\delta) @>\shift>> \tM(f,\delta)\\
			@V\shm_f VV @VV\shm_f V\\
			M @>>f> M\\
		\end{CD}.
	\end{equation}
\end{df}

From this definition we see that
any shift-invariant pseudo-orbit map is a semiconjugacy of the shift map and $f$.

\begin{rmk}
	\label{rmkshInvImpAShInv}
	From the definitions we also see that every shift-invariant pseudo-oprbit map has self-tuning.
\end{rmk}

\subsection{The shift homeomorphism}
\label{secElShift}
The shift homeomorphism is a well-known dynamical system and in this section we will exploit its dynamical properties.

\subsubsection{Bracket}
\label{secBracketShift}
As before, given a compact metric space $M$ consider $\tM=M^\Z$.
Now $\tM$ will be the phase space of our dynamical system (the shift).
Define $[\cdot,\cdot]\colon \tM\times \tM\to \tM$ as
\[
 [x,y]_i=
 \left\{
 \begin{array}{cl}
  x_i & \text{ if } i\leq -1,\\
  y_i & \text{ if } i\geq 0.
 \end{array}
 \right.
\]

\begin{prop}
 This bracket satisfies:
 \begin{enumerate}
  \item it is continuous,
  \item $[x,x]=x$ for all $x\in \tM$,
  \item $[x,[y,z]]=[x,z]=[[x,y],z]$ for all $x,y,z\in \tM$,
  \item $\tdist_s(x,[x,y])+\tdist_s([x,y],y)=\tdist_s(x,y)$ for all $x,y\in\tM$,
  \item $\max\{\tdist_m(x,[x,y]),\tdist_m([x,y],y)\}=\tdist_m(x,y)$ for all $x,y\in\tM$.
 \end{enumerate}
\end{prop}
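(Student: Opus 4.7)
The proof is a sequence of direct coordinatewise verifications; the whole proposition reduces to the observation that $[x,y]$ is built by splitting the bi-infinite sequence at the origin and copying $x$ to the left half and $y$ to the right half. The plan is to handle the five items in order, using the same decomposition $\Z = \{i\leq -1\}\sqcup\{i\geq 0\}$ each time.

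For (1), I would argue via the universal property of the product topology: the composition $\pi_i\circ[\cdot,\cdot]\colon\tM\times\tM\to M$ is, for each $i\in\Z$, either the projection $(x,y)\mapsto x_i$ (if $i\leq -1$) or $(x,y)\mapsto y_i$ (if $i\geq 0$). In either case it is continuous, hence $[\cdot,\cdot]$ is continuous. Item (2) is immediate from the definition since both branches produce $x_i$.

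For (3), the cleanest approach is a coordinatewise check. On the left side, $[x,[y,z]]_i=x_i$ for $i\leq -1$ and $[x,[y,z]]_i=[y,z]_i=z_i$ for $i\geq 0$; on the right side, $[[x,y],z]_i=[x,y]_i=x_i$ for $i\leq -1$ and $[[x,y],z]_i=z_i$ for $i\geq 0$. Both therefore equal $[x,z]_i$ for every $i$.

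For (4) and (5), I would exploit the key observation that $d(x_i,[x,y]_i)=0$ for $i\leq -1$ and equals $d(x_i,y_i)$ for $i\geq 0$, while $d([x,y]_i,y_i)$ satisfies the opposite pattern. Splitting the defining sums/maxima accordingly gives
\begin{align*}
\tdist_s(x,[x,y]) &= \sum_{i\geq 0}\mu^{|i|}\dist(x_i,y_i), &
\tdist_s([x,y],y) &= \sum_{i\leq -1}\mu^{|i|}\dist(x_i,y_i),
\end{align*}
whose sum is $\tdist_s(x,y)$, proving (4). For (5) the same decomposition of the index set, now with $\max$ in place of $\sum$, gives $\tdist_m(x,[x,y])=\max_{i\geq 0}\mu^{|i|}\dist(x_i,y_i)$ and $\tdist_m([x,y],y)=\max_{i\leq -1}\mu^{|i|}\dist(x_i,y_i)$, and the maximum of these two is $\tdist_m(x,y)$. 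There is no real obstacle here; the only thing to be careful about is consistently using the splitting at $0$ (so that index $0$ is assigned to the $y$-half) and noting that since $M$ is compact the supremum defining $\tdist_m$ is indeed attained, so splitting the index set does not introduce any issue with the maximum.
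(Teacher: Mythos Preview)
Your argument is correct and is exactly the coordinatewise verification the paper has in mind; in fact the paper does not spell out a proof at all, stating only that ``the proof of this proposition is direct from definitions.'' Your write-up supplies precisely those details, with the appropriate split of $\Z$ at the origin, and there is nothing to add or correct.
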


The proof of this proposition is direct from definitions.
As explained in \cite{Ruelle}*{\S 7.1} the first three conditions implies a \textit{global product structure} as follows. For $x\in\tM$ define
\begin{align*}
 V^-(x)=\{[x,y]\in \tM :y\in\tM\},\\
 V^+(x)=\{[y,x]\in \tM :y\in\tM\},
\end{align*}
then
\[
[\cdot,\cdot]\colon V^+(x)\times V^-(x)\to \tM
\]
is a homeomorphism, for any $x\in\tM$.

\subsubsection{The shift map}
\label{secPropDynShift}
The shift homeomorphism $\shift$ was already defined in \eqref{ecuDefShift}. As a dynamical system it has some remarkable properties:
\begin{enumerate}
 \item it is topologically mixing,
 \item it is expansive if and only if $M$ is finite,
 \item it has the shadowing property,
 \item the set of periodic points is dense in $\tM$,
 \item for $y,z\in V^+(x)$ we have $\tdist_s(\shift^n(y),\shift^n(z))=\mu^n \tdist_s(y,z)$ for all $n\geq 0$,
 \item for $y,z\in V^-(x)$ we have $\tdist_s(\shift^{-n}(y),\shift^{-n}(z))=\mu^n \tdist_s(y,z)$ for all $n\geq 0$.
\end{enumerate}
In the last items we use the parameter $\mu\in(0,1)$ that we consiedered in \S\ref{secPrelimPO} to define the metrics of $\tM$.
\subsubsection{A shadowing map}
A proof of the shadowing property of the shift homeomorphism can be found in \cite{AH}*{Theorem 2.3.12}.
There, essentially, it is defined a function
$\shm_{\shift}\colon \tM^\Z\to\tM$ as
$$(\shm_{\shift}(\alpha))_i=(\alpha_i)_0.$$
On $\tM^\Z$ we consider the product topology and the
distance $\ttdist_s$ define as follows.
For $\alpha,\beta\in \tM^\Z$
\[
 \ttdist_s(\alpha,\beta)
 =\sum_{i\in\Z}\mu^{|i|}\tdist_s(\alpha_i,\beta_i)
 =\sum_{i\in\Z}\mu^{|i|}\sum_{j\in\Z}\mu^{|j|}\dist((\alpha_i)_j,(\beta_i)_j)
 =\sum_{i,j\in\Z}\mu^{|i|+|j|}\dist((\alpha_i)_j,(\beta_i)_j).
\]
Notice that
\[
 \tdist_s(\shm_{\shift}(\alpha),(\shm_{\shift}(\beta)))
 =\sum_{i\in\Z}\mu^{|i|}\dist((\shm_{\shift}(\alpha))_i,(\shm_{\shift}(\beta))_i)
 =\sum_{i\in\Z}\mu^{|i|}\dist((\alpha_i)_0,(\beta_i)_0).
\]
Therefore, $\tdist_s(\shm_{\shift}(\alpha),\shm_{\shift}(\beta))\leq\ttdist_s(\alpha,\beta)$. In particular this proves the continuity of $\shm_{\shift}$.
Define $\orb_{\shift}\colon \tM\to\tM^\Z$ as
$(\orb_{\shift}(x))_i=\shift^i(x)$ for all $i\in\Z$.
It satisfies $\shm_{\shift}(\orb_{\shift}(x))=x$ for all $x\in \tM$. Indeed, for $i\in\Z$
\[
 (\shm_{\shift}(\orb_{\shift}(x)))_i=
 ((\orb_{\shift}(x))_i)_0
 =(\shift^i(x))_0
 =x_i.
\]
Let $\tilde\shift$ be the shift homeomorphism of $\tM^\Z$.
That is, $\tilde\shift\colon\tM^\Z\to\tM^\Z$ is defined as
$$(\tshift(\alpha))_i=\alpha_{i+1}$$ for all $i\in\Z$ (just as for $\shift$).
\begin{prop}
	\label{propShInvShmParaElShift}
	The shadowing map $\shm_\sigma$ is shift-invariant, \textit{i.e.},
	the diagram commutes
	\[
	\begin{CD}
		\tM^\Z @>\tilde\shift>> \tM^\Z\\
		@V\shm_{\shift} VV @VV\shm_{\shift} V\\
		\tM @>>\shift> \tM\\
	\end{CD}.
	\]
\end{prop}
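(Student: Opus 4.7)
The plan is to verify the commutativity of the diagram directly from the definitions, since both maps involved, $\tshift$ and $\shm_\shift$, are given by explicit coordinate formulas. The key observation is that $\tshift$ shifts the outer index (selecting a new element of $\tM$ at each position) while $\shm_\shift$ extracts the $0$-th inner coordinate; these two operations interact in a trivial way because the inner index is left untouched.

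Concretely, I would fix an arbitrary $\alpha\in\tM^\Z$ and compute both sides of \[\shm_\shift(\tshift(\alpha))=\shift(\shm_\shift(\alpha))\] coordinate by coordinate. For the left-hand side, using first the definition of $\shm_\shift$ and then that of $\tshift$, one has
\[
(\shm_\shift(\tshift(\alpha)))_i=((\tshift(\alpha))_i)_0=(\alpha_{i+1})_0
\]
for every $i\in\Z$. For the right-hand side, using first the definition of $\shift$ on $\tM$ and then that of $\shm_\shift$, one has
\[
(\shift(\shm_\shift(\alpha)))_i=(\shm_\shift(\alpha))_{i+1}=(\alpha_{i+1})_0
\]
for every $i\in\Z$. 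Since the two expressions coincide at every coordinate, the diagram commutes.

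The argument is essentially a bookkeeping exercise, with no real obstacle: everything reduces to the fact that both $\tshift$ and $\shift$ are defined by the rule ``increment the index by one'' on their respective levels, while $\shm_\shift$ only looks at the $0$-th inner coordinate and hence is insensitive to the ordering of these two operations. Continuity of $\shm_\shift$ and the fact that it is a pseudo-orbit map were already established in the preceding paragraphs (the bound $\tdist_s(\shm_\shift(\alpha),\shm_\shift(\beta))\leq\ttdist_s(\alpha,\beta)$ and the identity $\shm_\shift(\orb_\shift(x))=x$), so no additional work is needed there. The statement, therefore, reduces to the coordinate computation above, yielding the shift-invariance of $\shm_\shift$.
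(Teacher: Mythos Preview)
Your proof is correct and follows essentially the same approach as the paper: both compute $(\shm_\shift(\tshift(\alpha)))_i$ and $(\shift(\shm_\shift(\alpha)))_i$ directly from the definitions and observe that each equals $(\alpha_{i+1})_0$. The additional commentary you include about continuity and the pseudo-orbit map property is not needed for the commutativity of the diagram itself, but it does no harm.
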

\begin{proof}
	For $\alpha\in \tM^\Z$ we have
	\begin{align*}
		(\shm_{\shift}(\tshift(\alpha)))_i=
		((\tshift(\alpha))_i)_0
		&=(\alpha_{i+1})_0\text{ and }\\
		(\shift(\shm_{\shift}(\alpha)))_i=
		(\shm_{\shift}(\alpha))_{i+1}&=(\alpha_{i+1})_0
	\end{align*}
	which proves the result.
\end{proof}

\begin{rmk}
	The notation $\tM$ for $M^\Z$ has the purpose of hidding
	that $\tM^\Z$ has two natural shift actions. Indeed, $\tM^\Z=(M^\Z)^\Z$ and this can be regarded as $M^{\Z\times\Z}$ and
	then the shift can act on any coordinate.
	If $\alpha\in\tM^\Z$ we can write $(\alpha_i)_j=\alpha_{(i,j)}$. The shift $\tshift$ acts on the first variable, adds 1 to $i$.
	The other shift add 1 to $j$ and is equivalent to allow $\shift$ to act on each coordinate of $\tM^\Z$.
	That is, abusing of the notation, we consider
	$\shift\colon \tM^\Z\to\tM^\Z$ as
	$(\shift(\alpha))_i=\shift(\alpha_i)$.
	We see that
	\begin{align*}
		((\shift(\alpha))_i)_j=(\shift(\alpha_i))_j&=(\alpha_i)_{j+1}\text{ while }\\
		((\tshift(\alpha))_i)_j&=(\alpha_{i+1})_j.
	\end{align*}
\end{rmk}

It is natural to ask for the commutativity of the diagram ($\shift$ in place of $\tshift$)
 \[
\begin{CD}
 \tM^\Z @>\shift>> \tM^\Z\\
 @V\shm_{\shift} VV @VV\shm_{\shift} V\\
 \tM @>>\shift> \tM.\\
\end{CD}
\]
\begin{rmk}
	This diagram commutes if and only if $M$ is a singleton.
\end{rmk}
\begin{proof}
	If $M$ is a singleton all these spaces are singletons and any diagram commutes.
	To see the converse, notice that we already know that
	$(\shift(\shm_{\shift}(\alpha)))_i=(\alpha_{i+1})_0$.
	Also
	$$
	\shm_{\shift}(\shift(\alpha))_i
	=((\shift(\alpha))_i)_0
	=(\shift(\alpha_i))_0
	=(\alpha_i)_1.
	$$
	Thus, if $M$ has two different points we can take $\alpha$ with
	$(\alpha_i)_1\neq (\alpha_{i+1})_0$, for some $i\in\Z$, and the diagram does not commute.
\end{proof}

To illustrate a result we prove later (Theorem \ref{thmRuelleBExp}, the expansivity for homeomorphisms with a Ruelle's bracket), for $\delta>0$ given define
\[
 \tM^\Z(\shift,\delta)=\{\alpha\in\tM^\Z:\tdist_s(\shift(\alpha_i),\alpha_{i+1})\leq\delta\text{ for all }i\in\Z\}.
\]
This is the set of $\delta$-orbits of $\shift$.
\footnote{In general this set is not invariant by $\shift$, but as $\shift$ is continuous for all $\epsilon>0$ there is $\delta>0$ such that
if $\alpha$ is a $\delta$-orbit then $\shift(\alpha)$ is an $\epsilon$-orbit.}

\begin{prop}
	\label{propShifExpEquiv}
	The following statements are equivalent:
	\begin{enumerate}
		\item The diagram
    \[
    \begin{CD}
      \tM^\Z(\shift,\delta) @>\shift>> \tM^\Z\\
      @V\shm_{\shift} VV @VV\shm_{\shift} V\\
      \tM @>>\shift> \tM\\
    \end{CD}.
    \]
    commutes for some $\delta$,
    \item $M$ is finite,
    \item $\sigma$ is expansive.
	\end{enumerate}
\end{prop}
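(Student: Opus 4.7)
The plan is to establish the cycle $(2)\Rightarrow(1)\Rightarrow(2)$ and invoke the classical equivalence $(2)\iff(3)$, which is item (2) of the list in \S\ref{secPropDynShift}: the shift on $M^\Z$ is expansive if and only if $M$ is finite. The key device will be the explicit formula displayed in the remark immediately preceding the proposition, namely
\[
(\shm_\shift(\shift(\alpha)))_i=(\alpha_i)_1
\qquad\text{and}\qquad
(\shift(\shm_\shift(\alpha)))_i=(\alpha_{i+1})_0,
\]
valid for every $\alpha\in\tM^\Z$ and every $i\in\Z$. Consequently, commutativity of the diagram on $\tM^\Z(\shift,\delta)$ is equivalent to the coordinate identity $(\alpha_i)_1=(\alpha_{i+1})_0$ holding for every $i\in\Z$ and every $\delta$-pseudo-orbit $\alpha$. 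Isolating the $0$-th term in the series defining $\tdist_s$ gives the bridge $d((\alpha_i)_1,(\alpha_{i+1})_0)\leq \tdist_s(\shift(\alpha_i),\alpha_{i+1})\leq\delta$.

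For $(2)\Rightarrow(1)$: if $M$ is finite, let $\eta_0>0$ be the minimum distance between distinct points of $M$ and pick any $\delta<\eta_0$. The inequality above then forces the coordinate identity, and the diagram commutes for this $\delta$.

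For $(1)\Rightarrow(2)$ I argue contrapositively. If $M$ is infinite then by compactness it admits accumulation points, so for any $\delta>0$ I can choose distinct $p,q\in M$ with $d(p,q)\leq \delta(1-\mu)/(1+\mu)$. The constant sequences $\bar p,\bar q\in\tM$ are fixed by $\shift$ and satisfy $\tdist_s(\bar p,\bar q)=\tfrac{1+\mu}{1-\mu}\,d(p,q)\leq\delta$. Define $\alpha\in\tM^\Z$ by $\alpha_i=\bar p$ for $i\leq 0$ and $\alpha_i=\bar q$ for $i\geq 1$. All jumps except at $i=0$ vanish and the jump at $i=0$ is $\tdist_s(\bar p,\bar q)\leq\delta$, so $\alpha\in\tM^\Z(\shift,\delta)$; but $(\alpha_0)_1=p\neq q=(\alpha_1)_0$, so commutativity fails at this $\delta$. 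Since $\delta>0$ was arbitrary, (1) does not hold.

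The main obstacle, in my view, is conceptual rather than technical: one has to resist the temptation to deploy the standard uniqueness-from-expansivity argument in the direction $(3)\Rightarrow(1)$. Once the commutativity condition is translated into the coordinate identity above, both directions reduce to observing that the pseudo-orbit inequality measured in $\tdist_s$ controls, and is controlled by, the separation of distinct points of $M$, so the whole proposition collapses to the classical characterization of expansivity for full shifts.
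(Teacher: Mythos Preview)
Your proof is correct and follows essentially the same approach as the paper: both reduce commutativity to the coordinate identity $(\alpha_i)_1=(\alpha_{i+1})_0$ via the formulas displayed just before the proposition, then use the $j=0$ term of $\tdist_s(\shift(\alpha_i),\alpha_{i+1})$ to force this identity when $M$ is finite. The only difference is the counterexample in the contrapositive direction: the paper perturbs a single coordinate (setting $(\alpha_i)_j=p$ everywhere except $(\alpha_1)_0=q$), which avoids your geometric-series computation of $\tdist_s(\bar p,\bar q)$, but your block-constant construction is equally valid.
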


\begin{proof}
  We have already commented the equivalence of (2) and (3).
  We have seen that this diagram commutes for some $\alpha$ if and only if $(\alpha_i)_1= (\alpha_{i+1})_0$ for all $i\in\Z$.
  If $M$ is finite we can take $\delta$ such that $\dist(p,q)\leq\delta$ implies $p=q$. If $\alpha\in\tM^\Z(\shift,\delta)$ then
  \[
  \tdist_s(\shift(\alpha_i),\alpha_{i+1})=
  \sum_{j\in\Z}\mu^{|j|}\dist((\shift(\alpha_i))_j,(\alpha_{i+1})_j)=
  \sum_{j\in\Z}\mu^{|j|}\dist((\alpha_i)_{j+1},(\alpha_{i+1})_j)\leq\delta\text{ for all } i\in\Z.
  \]
  For $j=0$ we have
  $\dist((\alpha_i)_{1},(\alpha_{i+1})_0)\leq\delta$ for all $i\in\Z$.
  Thus $(\alpha_i)_{1}=(\alpha_{i+1})_0$ for all $i\in\Z$ and the diagram commutes.

  If $M$ has infinitely many points, given any $\delta>0$ we can take $p,q$ arbitrarily close such that if we define $(\alpha_i)_j=p$ for all $i,j\in\Z$ except for $(\alpha_1)_0=q$ then
  $\alpha\in\tM^\Z(\shift,\delta)$ but $(\alpha_1)_0=q\neq p=(\alpha_0)_1$ and the diagram does not commute.
\end{proof}

\begin{rmk}
	\label{rmkShiftShiftInvShmNoExp}
	If $M$ has infinitely many points then $\shift\colon \tM\to\tM$ is not expansive (Proposition \ref{propShifExpEquiv}) but has a shift-invariant shadowing map (Proposition \ref{propShInvShmParaElShift}).
\end{rmk}

\subsubsection{Mutual inductions}
Shadowing maps always induce brackets and some brackets induce shadowing maps (for instance, with Bowen's technique \S \ref{secBowenBr}).
We will ilustrate this for the shift homeomorphism (for the shadowing map and the bracket already defined).
Define $\con_{\sigma}\colon \tM\times\tM\to\tM^\Z$ as
\[
	\con_\sigma(x,y)_i=
	\left\{
	\begin{array}{ll}
		\sigma^i(y)	& \text{ if }i\geq 0,\\
		\sigma^i(x)	& \text{ if }i<0.
	\end{array}
	\right.
\]
For $x,y\in\tM$ we have
$$
	[x,y]=\shm_\sigma(\con_\sigma(x,y))
$$
and we see how a shadowing map induces a bracket.

Conversely, we now start with the bracket.
Take $\alpha\in\tM^\Z$.
As in \S\ref{secA pseudo-orbit map} define sequences $x_n,y_n\in\tM$, for $n\geq 0$ as
\[
	\left\{
	\begin{array}{l}
		x_0=\alpha_0,\\
		x_n=[\sigma(x_{n-1}),\alpha_n], n\geq 1,\\
		y_n=\sigma^{-n}(x_n).
	\end{array}
	\right.
\]
The next result formally explains the beautiful work of these equations.

\begin{prop}
	For all $n\geq 0$ and $0\leq i\leq n$ we have
	$(y_n)_i=(\alpha_i)_0$.
\end{prop}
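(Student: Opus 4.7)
The plan is to proceed by induction on $n$, after first rewriting the claim in a form that is more directly amenable to the recursion. Since $y_n = \sigma^{-n}(x_n)$ and $(\sigma^{-n}(x_n))_i = (x_n)_{i-n}$, the statement $(y_n)_i = (\alpha_i)_0$ for $0 \leq i \leq n$ is equivalent to
\[
(x_n)_k = (\alpha_{k+n})_0 \text{ for all } -n \leq k \leq 0.
\]
This reformulation is the useful one because the recursion defines $x_n$ directly, whereas $y_n$ is only an auxiliary shift.

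The base case $n=0$ is immediate: $(x_0)_0 = (\alpha_0)_0$ by definition of $x_0 = \alpha_0$. For the inductive step, I would assume $(x_{n-1})_k = (\alpha_{k+n-1})_0$ for $-(n-1) \leq k \leq 0$ and use the defining equation $x_n = [\sigma(x_{n-1}), \alpha_n]$, splitting into two cases according to the piecewise definition of the bracket on $\tilde M$. For $k=0$, the bracket takes the value from the second entry, so $(x_n)_0 = (\alpha_n)_0 = (\alpha_{0+n})_0$, as desired. For $-n \leq k \leq -1$, the bracket takes the value from the first entry, giving
\[
(x_n)_k = (\sigma(x_{n-1}))_k = (x_{n-1})_{k+1}.
\]
Since $-(n-1) \leq k+1 \leq 0$, the inductive hypothesis applies and yields $(x_{n-1})_{k+1} = (\alpha_{(k+1)+(n-1)})_0 = (\alpha_{k+n})_0$.

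There is no real obstacle here: the entire content of the proof is bookkeeping of indices, and the only subtlety is to be careful that in the recursion $x_n = [\sigma(x_{n-1}), \alpha_n]$ the negative coordinates of $x_n$ come from $\sigma(x_{n-1})$ (which reindexes the inductive hypothesis by $+1$) while the zero coordinate picks up the freshly added $\alpha_n$. The ``beautiful work'' of the equations referred to in the paper is precisely that these two contributions align to produce the single uniform formula $(\alpha_{k+n})_0$ over the full range $-n \leq k \leq 0$.
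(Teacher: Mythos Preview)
Your proof is correct and is essentially the same induction as the paper's, with one cosmetic difference: you first substitute $k=i-n$ and restate the claim as $(x_n)_k=(\alpha_{k+n})_0$ for $-n\le k\le 0$, then run the induction on the $x_n$ directly, whereas the paper keeps the $y_n$ notation and only passes to $(x_n)_{i-n}$ inside the computation. The case split (index $0$ picks up $\alpha_n$ from the second slot of the bracket, negative indices inherit $\sigma(x_{n-1})$ from the first slot and invoke the inductive hypothesis) is identical in both.
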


\begin{proof}
For $n=0$, $i=0$ we have $(y_0)_0=(x_0)_0=(\alpha_0)_0$.
Take $n\geq 1$ and assume that
$(y_n)_i=(\alpha_i)_0$ for $0\leq i\leq n$.
If $0\leq i\leq n$ then
\[
(y_{n+1})_i=(\shift^{-n-1}(x_{n+1}))_i=
(x_{n+1})_{i-n-1}=
[\sigma(x_{n}),\alpha_{n+1}]_{i-n-1}=
(\sigma(x_{n}))_{i-n-1}=
(x_{n})_{i-n}\text{ and }
\]
\[
	(\alpha_i)_0=(y_n)_i=
	(\sigma^{-n}(x_n))_i=
	(x_n)_{i-n}.
\]
Thus $(y_{n+1})_i=(\alpha_i)_0$ for $0\leq i\leq n$.
For $i=n+1$
\[
(y_{n+1})_{n+1}=
(\shift^{-n-1}(x_{n+1}))_{n+1}=
(x_{n+1})_{0}=
[\sigma(x_{n}),\alpha_{n+1}]_{0}=(\alpha_{n+1})_{0}
\]
and the proof ends.
\end{proof}

From this we see that $y_n$ is a convergent sequence and its limit $z^+\in\tM$ satisfies $z^+_n=(\alpha_n)_0$ for all $n\geq 0$.
Analogously, with $\sigma^{-1}$ in place of $\sigma$ we obtain $z^-\in\tM$
with $z^-_{-n}=(\alpha_{-n})_0$ for all $n\geq 0$.
Finally, the shadowing map is recovered by
$\shm_{\shift}(\alpha)=[z^-,z^+]$.
This way of inducing a shadowing map from a bracket is analogous to Bowen's technique shown in \S\ref{secBowenBr}.

\subsection{Topological stability}
\label{secTopoEst}
Following Walters \cites{Walters1970,Walters1978} we say that $f\in\homeo(M)$ is \textit{topologically stable} if
for all $\epsilon>0$ there is $\delta>0$ such that
if $\dist_{C^0}(f,g)<\delta$ then there is a continuous map
$h\colon M\to M$ such that
$\dist(h,I_M)<\epsilon$ and $fh=hg$ as in the next diagram
\[
\begin{CD}
	M @>g>> M\\
	@VhVV @VVhV\\
	M @>>f> M\\
\end{CD}
\]
\begin{rmk}
	In \cite{Ni71} Z. Nitecki proved that structurally stable $C^1$ diffeomorphisms are topologically stable.
	Notice that topological stability is invariant under conjugacy, thus any homeomorphism conjugate to a structurally stable diffeomorphism is topologically stable. These are the only examples of topologically stable homeomorphisms on closed manifolds known by the author.
	Walters \cite{Walters1978} proved that topologically stable homeomorphisms of closed manifold have the shadowing property.
\end{rmk}

\begin{prop}
	If $f$ admits a shift-invariant shadowing map then it is topologically stable.
\end{prop}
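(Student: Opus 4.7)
The plan is to build the semiconjugacy $h$ by pulling orbits of $g$ back through $\shm_f$. Specifically, for $g$ sufficiently $C^0$-close to $f$, define
\[
h(p)=\shm_f(\orb_g(p)).
\]
Given $\epsilon>0$, choose $\delta>0$ small enough that the shadowing map $\shm_f$ is defined on $\tM(f,\delta)$ and so that the shift-invariance identity $\shm_f\circ\shift=f\circ\shm_f$ holds on $\tM(f,\delta)$.

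First I would verify that $h$ makes sense: for any $p\in M$, the sequence $\orb_g(p)$ has $(\orb_g(p))_{i+1}=g^{i+1}(p)=g(g^i(p))$, so
\[
\dist(f((\orb_g(p))_i),(\orb_g(p))_{i+1})=\dist(f(g^i(p)),g(g^i(p)))\leq \distcc(f,g),
\]
so if $\distcc(f,g)<\delta$ then $\orb_g(p)\in\tM(f,\delta)$, making $h\colon M\to M$ well defined. Continuity of $h$ is immediate from continuity of $\orb_g$ and $\shm_f$.

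Next I would check the semiconjugacy. Note $\orb_g(g(p))=\shift(\orb_g(p))$ since both sequences send $i\mapsto g^{i+1}(p)$. Using shift-invariance,
\[
h(g(p))=\shm_f(\orb_g(g(p)))=\shm_f(\shift(\orb_g(p)))=f(\shm_f(\orb_g(p)))=f(h(p)),
\]
so $hg=fh$. This is where shift-invariance is essential; without it one would need the uniqueness argument of Walters, which requires expansivity.

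Finally, for the $\epsilon$-closeness to $\id$, I use that $h(p)=p$ when $g=f$, because $\shm_f(\orb_f(p))=p$ by the pseudo-orbit axiom. By uniform continuity of $\shm_f$ on $\tM(f,\delta)$, there is $\nu>0$ such that $\tdist_s(x,y)<\nu$ forces $\dist(\shm_f(x),\shm_f(y))<\epsilon$. By Corollary \ref{coroContOrb}, there is $\delta'\in(0,\delta]$ such that $\distcc(f,g)<\delta'$ implies $\tdist_s(\orb_g(p),\orb_f(p))<\nu$ uniformly in $p$. For such $g$,
\[
\dist(h(p),p)=\dist(\shm_f(\orb_g(p)),\shm_f(\orb_f(p)))<\epsilon
\]
for every $p\in M$, which closes the argument. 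No single step is a real obstacle here; the shift-invariance axiom is precisely what converts the shadowing map into a topological semiconjugacy, and the remaining verifications are routine continuity estimates provided by the preliminaries of \S\ref{secPOM}.
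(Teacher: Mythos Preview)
Your proof is correct and follows essentially the same approach as the paper: define $h=\shm_f\circ\orb_g$, use shift-invariance to get $hg=fh$, and use uniform continuity of $\shm_f$ together with Corollary~\ref{coroContOrb} to control $\distcc(h,\id_M)$. Your write-up is in fact a bit more explicit than the paper's in verifying that $\orb_g(p)\in\tM(f,\delta)$ and in spelling out the semiconjugacy computation via $\orb_g(g(p))=\shift(\orb_g(p))$.
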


\begin{proof}
	Let $\shm_f\colon \tM(f,\beta)\to M$ be a shadowing map for $f$ and take $\epsilon>0$.
	By the continuity of $\shm_f$ there is $\gamma>0$ such that $\dist(x,y)<\gamma$ implies
	$\dist(\shm_f(x),\shm_f(y))<\epsilon$ for all $x,y\in \tM(f,\beta)$.
	By Corollary \ref{coroContOrb} there is $\delta>0$ such that
	$\distcc(f,g)<\delta$ implies $\dist(\orb_g(p),\orb_f(p))<\gamma$ for all $p\in M$.

	Suppose that $\dist_{C^0}(f,g)<\delta$ and
	define $h\colon M\to M$ as
	$h(p)=\shm_f(\orb_g(p))$.
	We have
	\[
	\begin{array}{rl}
		\distcc(h,Id_M) & \displaystyle =\sup_{p\in M}\dist(h(p),p) =\sup_{p\in M}\dist(\shm_f(\orb_g(p)),p)\\
		& \displaystyle =\sup_{p\in M}\dist(\shm_f(\orb_g(p)),\shm_f(\orb_f(p)))<\epsilon.
	\end{array}
	\]
	We know that $h$ is continuous by the continuity of $\shm_f$ and Corollary \ref{coroContOrb}.
	We have that $fh=hg$ because $\shm_f$ satisfies \eqref{ecuEquivariant}. We proved that $f$ is topologically stable.
\end{proof}

\subsection{Recurrence}
\label{secRecurrence}
Given $p,q\in M$ and $\alpha>0$ we write $p\overset\alpha\sim q$ if there are $\alpha$-orbits of $f$ such that $x_0=p,x_1,\dots,x_k=q$ and $q=y_0,y_1,\dots,y_l=p$.
If $p\overset\alpha\sim q$ for all $\alpha>0$ we write $p\sim q$.
The \textit{chain recurrence set} of $f$ is defined as
\[
R(f)=\{p\in M:p\sim p\}.
\]
A point $p\in M$ is \textit{wandering} if it has a neighborhood $U$ such that $f^n(U)\cap U=\emptyset$ for all $n\geq 1$. The set of non-wandering points is denoted as $\Omega(f)$.
In general $\Omega(f)\subset R(f)$ and the equality holds if $f$ has shadowing.
A $\delta$-orbit $x$ is \textit{periodic} if there is $n\geq 1$ such that $x_{i+n}=x_i$ for all $i\in\Z$ (\textit{i.e.} it is periodic by the shift homeomorphism).
In \cite{Aoki83} N. Aoki proved that if $f$ has the shadowing property then the restriction of $f$ to the non-wandering set has shadowing too.

\begin{prop}
	If $f$ admits a shift-invariant shadowing map $\shm_f\colon \tM(f,\delta)\to M$ then:
	\begin{enumerate}[label=(\alph*)]

		\item\label{thmShmSH-R} $R(f)=\clos(\per(f))$.
		\item\label{thmShmSH-Omega} $f|_{\Omega(f)}$ has a shift-invariant shadowing map.
	\end{enumerate}
\end{prop}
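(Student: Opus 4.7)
The plan for (a) is as follows. From Proposition~\ref{propShmSH}, $f$ has the shadowing property, whence the classical equality $\Omega(f)=R(f)$ holds; since $\per(f)\subseteq\Omega(f)=R(f)$ and $R(f)$ is closed, we immediately get $\clos(\per(f))\subseteq R(f)$. For the reverse inclusion I would fix $p\in R(f)$ and $\epsilon>0$, pick $\gamma\in(0,\delta]$ corresponding to $\epsilon$ via the shadowing-inducing property of $\shm_f$, and exploit $p\overset{\gamma}{\sim}p$ to splice together a periodic $\gamma$-pseudo orbit $x\in\tM(f,\gamma)$ with $x_0=p$ and some period $T\geq 1$. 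Shift-invariance applied to $\shift^T(x)=x$ then yields $f^T(\shm_f(x))=\shm_f(\shift^T(x))=\shm_f(x)$, so $\shm_f(x)\in\per(f)$, and the shadowing bound forces $\dist(\shm_f(x),p)\leq\epsilon$. Letting $\epsilon\to 0$ gives $R(f)\subseteq\clos(\per(f))$.

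For (b) the plan is to define $\shm'_f$ as the restriction of $\shm_f$ to the closed shift-invariant set $\tM_\Omega:=\Omega(f)^{\Z}\cap\tM(f,\delta)=\tM(f|_{\Omega(f)},\delta)$. Continuity, fixing of real orbits, shift-invariance, and the shadowing-inducing property are all inherited from $\shm_f$, so the only thing left to verify is that $\shm'_f$ takes its values in $\Omega(f)$. The observation from (a) that periodic $\delta$-pseudo orbits of $f$ yield periodic shadows transfers verbatim: if $x\in\tM_\Omega$ is periodic of period $T$, shift-invariance gives $\shm_f(x)\in\per(f)\subseteq\Omega(f)$. Hence, if one shows that the periodic $\delta$-pseudo orbits in $\tM_\Omega$ are dense in $\tM_\Omega$, the continuity of $\shm_f$ together with the closedness of $\Omega(f)$ yields $\shm_f(\tM_\Omega)\subseteq\Omega(f)$ and finishes (b).

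The main obstacle is precisely this density statement. Given $x\in\tM_\Omega$, a window $[-N,N]$ and a tolerance $\eta>0$, one needs to produce a periodic $\delta$-pseudo orbit $y\in\Omega(f)^{\Z}$ with $\dist(y_i,x_i)<\eta$ for $|i|\leq N$. The density of $\per(f)$ in $\Omega(f)$ supplied by (a) lets one approximate each $x_i$ by a periodic point, and the chain-recurrence of $\Omega(f)$ furnishes closing loops at such points; the delicate step is manufacturing a $\delta$-pseudo path from a point close to $x_N$ back to a point close to $x_{-N}$ inside $\Omega(f)$, which is not automatic when $x_N$ and $x_{-N}$ sit in different chain components. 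I expect that a careful adaptation of Aoki's splicing argument from \cite{Aoki83}, fed with the extra periodic structure produced by shift-invariance through (a), resolves this density claim and completes the proof.
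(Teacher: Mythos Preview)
Your argument for (a) is exactly the paper's: build a periodic $\gamma$-pseudo orbit through $p$, apply shift-invariance to see that its $\shm_f$-image is a periodic point $\epsilon$-close to $p$, and let $\epsilon\to 0$.

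For (b) your overall strategy---restrict $\shm_f$ and check that its values land in $\Omega(f)$ by approximating with periodic pseudo orbits---is also the paper's, and you have correctly located the obstruction. But two points separate your sketch from a complete proof, and both are handled cleanly in the paper. First, you do not need the approximating periodic pseudo orbits $z^n$ to lie in $\Omega(f)^{\Z}$: $\shm_f$ is defined on all of $\tM(f,\delta)$, so it suffices that $z^n\in\tM(f,\delta)$, that $z^n$ be $\shift$-periodic (so $\shm_f(z^n)\in\per(f)\subset\Omega(f)$), and that $z^n\to x$ in the product topology. This removes your self-imposed requirement that the closing loop stay inside $\Omega(f)$. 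Second, the way to guarantee that such a closing $\delta$-loop exists is not to argue at scale $\delta$ but to \emph{shrink the domain}: by Aoki's result there are only finitely many $\overset{\delta}{\sim}$-classes $A_1,\dots,A_k$ in $\Omega(f)$, each closed and $f$-invariant; set $\delta_1=\min_{i\neq j}\dist(A_i,A_j)>0$ and take $\alpha<\min\{\delta,\delta_1\}$. Then every $x\in\Omega(f)^{\Z}\cap\tM(f,\alpha)$ is trapped in a single class $A_j$, so $x_{-n}\overset{\delta}{\sim}x_n$ for all $n$, and you can splice a $\delta$-chain from $x_n$ back to $x_{-n}$ to get the periodic $z^n$. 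The restricted map $\shm_f\colon\Omega(f)^{\Z}\cap\tM(f,\alpha)\to\Omega(f)$ is then the desired shift-invariant shadowing map for $f|_{\Omega(f)}$. Your worry about $x_N$ and $x_{-N}$ lying in different chain components is precisely what the passage from $\delta$ to $\alpha$ eliminates.
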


\begin{proof}
	\textit{\ref{thmShmSH-R}.} Take $p\in R(f)$ and $\epsilon>0$.
	Take $\gamma>0$ such that if
	$x\in \tM(f,\gamma)$ then $\shm_f(x)$ is an $\epsilon$-shadow of $x$.
	As $p$ is chain recurrent, there is a periodic $\gamma$-orbit $x$ with $x_0=p$.
	If $n$ is the period of $x$ (by the shift) then
	$\shm_f(x)=\shm_f(\shift^n(x))=f^n(\shm_f(x))$, thus $\shm_f(x)$ is a periodic point.
	Since $q=\shm_f(x)$ is an $\epsilon$-shadow of $x$ we have that $\dist(q,x_0)=\dist(q,p)<\epsilon$ and $q$ is periodic for $f$. We proved that $R(f)\subset\clos(\per(f))$. As the other inclusion holds for any homeomorphism the proof ends.

	\textit{\ref{thmShmSH-Omega}.}
	For this proof we follow \cite{Aoki83} (in fact with our hypothesis the argument is simpler).
	As explained in that paper, there are finitely many equivalence classes of $\overset\delta\sim$ and each class is closed and $f$-invariant.
	Let $A_1,\dots,A_k$ be the equivalence classes of $\overset\delta\sim$.
	Take $\delta_1=\inf\{\dist(A_i,A_j):1\leq i<j\leq k\}>0$ and
	$\alpha>0$ such that $\alpha<\min\{\delta,\delta_1\}$.
	Define
	\[
	\Omega^\Z(f,\alpha)=\{x\in \tM(f,\alpha): x_i\in\Omega(f) \text{ for all }i\in\Z\}.
	\]

	We will show that $\shm_f(x)\in\Omega(f)$ for all $x\in\Omega^\Z(f,\alpha)$.
	For $x\in\Omega^\Z(f,\alpha)$
	notice that there is a class $A_j$ containing $x_i$ for all $i\in\Z$.
	For any $n\geq 1$ we have $x_{-n}\overset\delta\sim x_n$ and there is a periodic
	$\delta$-orbit $z^n$ such that $z^n_i=x_i$ for all $|i|\leq n$.
	We have that $\shm_f(z^n)$ is a periodic point, thus $\shm_f(z^n)\in\Omega(f)$.
	Since $z^n\to x$ as $n\to\infty$, we conclude that $\shm_f(x)\in\Omega(f)$ and the proof ends.
\end{proof}

\subsection{Bracket with uniform contraction}
\label{secBracket with uniform contraction}
In \S\ref{secHyperbolic brackets} we considered brackets with hyperbolic contraction. We can prove that the bracket induced by a shift-invariant shadowing map satisfies a weaker condition.

\begin{df}
	An L-bracket has \textit{uniform contraction} if
	for all $\epsilon>0$ there is $m\geq 1$ such that
	\begin{align*}
		\dist(f^n([p,q]),f^n(q))<\epsilon,\\
		\dist(f^{-n}([p,q]),f^{-n}(p))<\epsilon
	\end{align*}
	for all $n\geq m$.
\end{df}

\begin{rmk}
	The brackets with hyperbolic contraction defined in \S\ref{secHyperbolic brackets} have uniform contraction.
\end{rmk}

\begin{prop}
	\label{propSh-invShmBrUnifCont}
	The bracket induced by a shift-invariant shadowing map has uniform contraction.
\end{prop}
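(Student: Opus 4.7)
The plan is to use shift-invariance to transport the action of $f^n$ inside the shadowing map, and then to show that the two pseudo-orbits whose images we want to compare are already very close in $\tdist_s$ for large $n$, so uniform continuity of $\shm_f$ finishes the job.

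More precisely, let $\shm_f\colon\tM(f,\delta)\to M$ be a shift-invariant shadowing map and let $[p,q]=\shm_f(\con_f(p,q))$ for $(p,q)\in\Delta_\delta$. By shift-invariance applied $n$ times we have
\[
f^n([p,q])=f^n(\shm_f(\con_f(p,q)))=\shm_f(\shift^n(\con_f(p,q))),
\]
while $f^n(q)=\shm_f(\orb_f(f^n(q)))$ since $\shm_f$ is a pseudo-orbit map. So I would estimate
\[
\dist(f^n([p,q]),f^n(q))=\dist\bigl(\shm_f(\shift^n(\con_f(p,q))),\,\shm_f(\orb_f(f^n(q)))\bigr)
\]
via the uniform continuity of $\shm_f$ on $\tM(f,\delta)$.

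The key computation is to compare the two pseudo-orbits entry by entry. Write $y=\shift^n(\con_f(p,q))$ and $z=\orb_f(f^n(q))$. Then $z_i=f^{i+n}(q)$ for every $i\in\Z$, while by definition of $\con_f$ we have $y_i=f^{i+n}(q)$ for $i\geq -n$ and $y_i=f^{i+n}(p)$ for $i<-n$. Hence $y$ and $z$ agree on all coordinates $i\geq -n$, and therefore
\[
\tdist_s(y,z)=\sum_{i<-n}\mu^{|i|}\dist(f^{i+n}(p),f^{i+n}(q))\leq \diam(M)\sum_{i>n}\mu^{i}=\frac{\mu^{n+1}}{1-\mu}\diam(M),
\]
which is a bound independent of $(p,q)\in\Delta_\delta$ and tending to $0$ as $n\to\infty$. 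Given $\epsilon>0$, pick $\nu>0$ from the uniform continuity of $\shm_f$ so that $\tdist_s$-distance less than $\nu$ forces $\dist$-distance less than $\epsilon$ of the images, then pick $m$ so that $\frac{\mu^{m+1}}{1-\mu}\diam(M)<\nu$. This gives the first inequality in the definition of uniform contraction for all $n\geq m$.

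The second inequality is entirely symmetric: using shift-invariance for $\shift^{-n}$ gives $f^{-n}([p,q])=\shm_f(\shift^{-n}(\con_f(p,q)))$, and $\shift^{-n}(\con_f(p,q))$ agrees with $\orb_f(f^{-n}(p))$ at every index $i<n$, so $\tdist_s$ between them is bounded by $\frac{\mu^{n}}{1-\mu}\diam(M)$ and the same uniform continuity argument applies. There is no real obstacle here; the only point worth checking is that both pseudo-orbits entering $\shm_f$ lie in $\tM(f,\delta)$, which is immediate since $\con_f(p,q)\in\tM(f,\delta)$ whenever $\dist(p,q)\leq\delta$ and $\tM(f,\delta)$ is shift-invariant by Remark \ref{rmkMdeltaShiftInv}, while $\orb_f(f^{\pm n}(q))\in\tM(f,0)\subset\tM(f,\delta)$.
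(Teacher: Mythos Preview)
Your proof is correct and follows essentially the same route as the paper's: use shift-invariance to write $f^n([p,q])=\shm_f(\shift^n(\con_f(p,q)))$ and $f^n(q)=\shm_f(\shift^n(\orb_f(q)))$, observe that the two sequences agree on all coordinates $i\geq -n$ so their $\tdist_s$-distance decays like $\mu^n$, and conclude via uniform continuity of $\shm_f$. The only difference is cosmetic: the paper phrases the decay estimate through the hyperbolic bracket of the shift on $\tM$ (noting $\con_f(p,q)=[\orb_f(p),\orb_f(q)]$ and invoking the contraction $\tdist_s(\shift^n([x,y]),\shift^n(y))\leq\mu^n\diam(\tM)$), whereas you compute the tail sum directly.
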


\begin{proof}
As explained in \S \ref{secPropDynShift}
the shift homeomorphism has a hyperbolic bracket, \textit{i.e.}
for $x,y\in \tM$ we have
$\tdist_s(\shift^n([x,y]),\shift^n(y))=\mu^n \tdist_s(x,y)$ for all $n\geq 0$
and
$\tdist_s(\shift^{-n}([x,y]),\shift^{-n}(x))=\mu^n \tdist_s(x,y)$ for all $n\geq 0$.
Recall that this bracket is defined from $\tM\times\tM$ to $\tM$.
On $M$ we define $[p,q]_M=\shm_f(\con_f(p,q))$.
As $\shm_f$ is uniformly continuous there is $\delta>0$ such that if $\tdist_s(x,y)<\delta$ then $\dist(\shm_f(x),\shm_f(y))<\epsilon$.
Take $m\geq 1$ such that
$\mu^n\diam(\tM)<\delta$ whenever $n\geq m$.
For $n\geq m$ we have
\[
	\tdist_s(\shift^n(\orb_f(q)),\shift^n(\con_f(p,q)))<\mu^n\diam(\tM)<\delta
\]
and applying $\shm_f$ we conclude
\begin{align*}
	\epsilon&>\dist(\shm_f(\shift^n(\orb_f(q)),\shm_f(\shift^n(\con_f(p,q))))\\
	&=\dist(f^n(\shm_f(\orb_f(q))),f^n(\shm_f(\con_f(p,q))))\\
	&=\dist(f^n(q),f^n([p,q]))
\end{align*}
and the proof ends.
\end{proof}

\section{Topological hyperbolicity}
\label{Topological hyperbolicity}
This section is about shadowing maps and brackets for expansive homeomorphisms.
In \S\ref{secDinamically-invariant pseudo-orbit maps} we consider dynamically-invariant shadowing maps.
In \S\ref{secStandard expansivity} we consider shadowing maps for
topologically hyperbolic homeomorphisms.
In \S\ref{secRuelle's bracket} we study Ruelle's set of axioms for Smale spaces and topological hyperbolicity.

\subsection{Dinamically-invariant pseudo-orbit maps}
\label{secDinamically-invariant pseudo-orbit maps}
Abusing of the notation we consider $f\colon \tM\to \tM$ as $f(x)_i=f(x_i)$ for all $i\in\Z$.
By continuity, for all $\epsilon>0$ there is $\delta>0$ such that
if $x$ is a $\delta$-orbit then $f(x)$ is an $\epsilon$-orbit.
A map $\shm_f\colon \tM(f,\epsilon)\to M$ is \textit{dynamically-invariant} if $f \shm_f=\shm_f f$ on $\tM(f,\delta)$.

\begin{exa}
	\label{exaProjTildefInv}
	The projection $\shm_f(x)=x_0$ is dynamically-invariant.
	This is because
	\[
	f (\shm_f(x))=f(x_0)=\shm_f (f(x)).
	\]
	Thus, the homeomorphisms with h-shadowing considered in \S\ref{secHsh} have a dynamically-invariant shadowing map.
	This also allows us to conclude that there are dynamically-invariant pseudo-orbit maps which are not shadowing maps (any homeomorphism without the h-shadowing property).
\end{exa}

As we mentioned in \S\ref{secIntro} topologically hyperbolic homeomorphisms have a
dynamically-invariant shadowing map (which in addition is shift-invariant).

\subsection{Standard expansivity}
\label{secStandard expansivity}
Recall that $f$ is \textit{expansive} if there is $\expc>0$ such that if $p\neq q$, $p,q\in M$, then there is $i\in\Z$ such that $\dist(f^i(p),f^i(q))>\expc$.
\begin{df}
	A bracket is $f$-\textit{invariant} if
	$f([p,q])=[f(p),f(q)]$ when both sides are defined.
\end{df}

\begin{prop}
	\label{propExpDesdeCorchete}
	If $f$ admits a dynamically-invariant bracket with uniform contraction then $f$ is expansive.
\end{prop}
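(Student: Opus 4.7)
The plan is to show that the constant $\delta > 0$ defining the domain $\Delta_\delta$ of the bracket already serves as an expansivity constant for $f$. Suppose $p,q \in M$ satisfy $\dist(f^i(p), f^i(q)) \leq \delta$ for every $i \in \Z$; then each pair $(f^i(p), f^i(q))$ lies in $\Delta_\delta$, so the bracket value $[f^i(p), f^i(q)]$ is defined for all $i$.

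The key is to combine dynamical invariance with uniform contraction in order to force $[p,q]$ to coincide simultaneously with both $p$ and $q$. By $f$-invariance we have
\[
f^i([p,q]) = [f^i(p), f^i(q)] \quad \text{for all } i \in \Z.
\]
To show $[p,q] = q$, I fix an arbitrary $\epsilon > 0$, take $m \geq 1$ from the definition of uniform contraction, and apply the forward estimate to the pair $(f^{-m}(p), f^{-m}(q)) \in \Delta_\delta$:
\[
\dist\bigl(f^{m}([f^{-m}(p), f^{-m}(q)]),\; f^{m}(f^{-m}(q))\bigr) < \epsilon.
\]
Rewriting $f^{m}([f^{-m}(p), f^{-m}(q)]) = [p,q]$ via $f$-invariance and noting $f^{m}(f^{-m}(q)) = q$, this collapses to $\dist([p,q], q) < \epsilon$. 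Since $\epsilon$ was arbitrary, $[p,q] = q$.

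A symmetric argument using the backward clause of uniform contraction, applied this time to the pair $(f^{m}(p), f^{m}(q)) \in \Delta_\delta$, produces $\dist([p,q], p) < \epsilon$ and hence $[p,q] = p$. Combining the two equalities gives $p = q$, establishing expansivity with constant $\delta$. The argument is essentially a bookkeeping exercise; the only (mild) subtlety is selecting the appropriate translated pair so that the iterates $f^{\pm m}$ cancel cleanly against $f$-invariance, which is what makes the uniform (rather than merely asymptotic) contraction indispensable.
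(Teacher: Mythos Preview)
Your proof is correct and follows essentially the same route as the paper's: both use $f$-invariance to identify $f^{i}([p,q])$ with $[f^{i}(p),f^{i}(q)]$, then apply the uniform-contraction estimates to the shifted pairs $(f^{\mp m}(p),f^{\mp m}(q))$ so that the $f^{\pm m}$ cancel and yield $\dist([p,q],q)<\epsilon$ and $\dist([p,q],p)<\epsilon$. The paper merely packages this with the auxiliary notation $r_i=[f^i(p),f^i(q)]$ and phrases the shift as choosing $i=\mp m$, but the argument is the same.
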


\begin{proof}
	Take $\expc>0$ such that $\dist(p,q)\leq\expc$ implies $\dist(p,[p,q])<\delta$ and
	$\dist(p,[q,p])<\delta$.
	We will show that $\expc$ is an expansivity constant.

	Suppose that $\dist(f^i(p),f^i(q))\leq \expc$ for all $i\in\Z$.
	Let $p_i=f^i(p)$, $q_i=f^i(q)$ and $r_i=[p_i,q_i]$ for all $i\in\Z$.
	From the $f$-invariance we know that $f^i(r_0)=r_i$.

	Given $\epsilon>0$ consider $m$ from the uniform contraction property.
	Thus we have
	\begin{align*}
		\dist(f^m(q_i),f^m(r_i))<\epsilon,\\
		\dist(f^{-m}(p_i),f^{-m}(r_i))<\epsilon
	\end{align*}
	for all $i\in\Z$.
	For $i=-m$ in the first inequality we have
	\[
	\dist(q,r_0)=\dist(q_0,r_0)=\dist(f^m(q_{-m}),f^m(r_{-m}))<\epsilon.
	\]
	As $\epsilon$ is arbitrary we conclude $q=r_0$.
	Analogously, for $i=n$ in the second inequality we conclude $p=r_0$ and $p=q$.
	Thus, $\expc$ is an expansivity constant for $f$.
\end{proof}

\begin{cor}
	\label{corshmimpexp}
	If $f$ admits a shadowing map which is dynamically-invariant and shift-invariant then $f$ is expansive.
\end{cor}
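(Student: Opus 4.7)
The plan is to reduce the corollary to Proposition \ref{propExpDesdeCorchete} by showing that the bracket induced by the given shadowing map is $f$-invariant and has uniform contraction; the first property will come from the dynamical-invariance of $\shm_f$ and the second from Proposition \ref{propSh-invShmBrUnifCont}.

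First I would unpack the induced bracket $[p,q]=\shm_f(\con_f(p,q))$ introduced in \S\ref{secIndBracket}. The preliminary observation is that the connecting map intertwines with $f$: a direct coordinate-by-coordinate check gives $f(\con_f(p,q))=\con_f(f(p),f(q))$, where on the left $f$ acts diagonally on $\tM$ via $(f(x))_i=f(x_i)$. Granted this, the dynamical-invariance hypothesis $f\,\shm_f=\shm_f\,f$ on $\tM(f,\delta)$ yields
\[
f([p,q])=f(\shm_f(\con_f(p,q)))=\shm_f(f(\con_f(p,q)))=\shm_f(\con_f(f(p),f(q)))=[f(p),f(q)],
\]
whenever both sides are defined, i.e.\ the induced bracket is $f$-invariant in the sense of \S\ref{secStandard expansivity}.

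Second, since $\shm_f$ is shift-invariant, Proposition \ref{propSh-invShmBrUnifCont} applies and gives that the induced bracket $[\cdot,\cdot]$ has uniform contraction. Combining these two facts, the induced bracket is a dynamically-invariant bracket with uniform contraction, so Proposition \ref{propExpDesdeCorchete} applies directly and produces an expansivity constant for $f$.

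I do not expect a serious obstacle: the only thing to verify from scratch is the identity $f\circ\con_f(p,q)=\con_f(f(p),f(q))$, which is a routine check on the two cases $i<0$ and $i\geq 0$ of the definition of $\con_f$. Once this is in place, the two cited propositions do all the work, and the corollary falls out immediately.
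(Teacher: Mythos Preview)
Your proposal is correct and follows exactly the paper's route: invoke Proposition \ref{propSh-invShmBrUnifCont} (shift-invariance gives uniform contraction of the induced bracket) and then Proposition \ref{propExpDesdeCorchete} (an $f$-invariant bracket with uniform contraction forces expansivity). You are in fact more explicit than the paper, which omits the verification that dynamical-invariance of $\shm_f$ makes the induced bracket $f$-invariant; your identity $f\circ\con_f(p,q)=\con_f(f(p),f(q))$ is the missing line, and its coordinate check is indeed routine.
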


\begin{proof}
	By Proposition \ref{propSh-invShmBrUnifCont} there is a bracket with uniform contraction. Thus, the result follows from Proposition \ref{propExpDesdeCorchete}.
\end{proof}

The following result is essentially known.

\begin{thm}
	\label{thmTopHypChar}
	The following are equivalent:
	\begin{enumerate}
		\item $f$ is topologically hyperbolic,
		\item $f$ admits a shadowing map which is shift-invariant and dynamically-invariant.
	\end{enumerate}
\end{thm}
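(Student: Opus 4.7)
The plan is to prove the two implications separately, with essentially all of the work concentrated in $(1) \Rightarrow (2)$; the reverse direction follows by assembling results already in the paper. For $(2) \Rightarrow (1)$, Proposition \ref{propShmSH} shows that every shadowing map induces the shadowing property, while Corollary \ref{corshmimpexp} shows that a dynamically-invariant, shift-invariant shadowing map forces expansivity. Together these give topological hyperbolicity.

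For $(1) \Rightarrow (2)$, I would construct the canonical shadowing map via uniqueness, along the lines sketched in the introduction. Fix an expansivity constant $\expc$, choose $\epsilon_0 \leq \expc/2$, and let $\delta_0>0$ be a corresponding constant from the shadowing property. For $x \in \tM(f,\delta_0)$ existence of an $\epsilon_0$-shadow is immediate, and uniqueness follows because two $\epsilon_0$-shadows $p_1,p_2$ would satisfy $\dist(f^i(p_1),f^i(p_2)) \leq 2\epsilon_0 \leq \expc$ for every $i\in\Z$, forcing $p_1=p_2$ by expansivity. Define $\shm_f(x)$ to be this unique $\epsilon_0$-shadow.

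The axiomatic checks are then short consequences of uniqueness. The identity $\shm_f(\orb_f(p))=p$ is immediate because $p$ is itself an $\epsilon_0$-shadow of its own orbit. For shift-invariance, $f(\shm_f(x))$ is an $\epsilon_0$-shadow of $\shift(x) \in \tM(f,\delta_0)$, so by uniqueness $\shm_f(\shift(x)) = f(\shm_f(x))$. For dynamical invariance, first shrink $\delta_0$ to some $\delta_1>0$ with the property that whenever $x$ is a $\delta_1$-orbit, the sequence $f(x)$ defined coordinatewise is still a $\delta_0$-orbit (possible by uniform continuity of $f$); then $f(\shm_f(x))$ is an $\epsilon_0$-shadow of $f(x)$, and uniqueness gives $\shm_f(f(x)) = f(\shm_f(x))$. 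Once continuity is in hand, Proposition \ref{propShmSH} upgrades $\shm_f$ to a shadowing map in the sense of Definition \ref{dfShm}.

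The main obstacle — the only step requiring more than bookkeeping — is the continuity of $\shm_f$, which is needed before we can even call it a pseudo-orbit map. I would argue by sequential compactness combined with uniqueness: given $x^{(n)} \to x$ in $\tM(f,\delta_1)$, use compactness of $M$ to extract a subsequence with $\shm_f(x^{(n_k)}) \to q \in M$. For each $i\in\Z$, the inequality $\dist(f^i(\shm_f(x^{(n_k)})),x^{(n_k)}_i) \leq \epsilon_0$ passes to the limit by continuity of $f^i$ and of the distance, yielding $\dist(f^i(q),x_i) \leq \epsilon_0$; thus $q$ is an $\epsilon_0$-shadow of $x$, and uniqueness forces $q = \shm_f(x)$. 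A standard argument then promotes subsequential convergence to convergence of the whole sequence, giving continuity. The key technical fact being exploited is that the shadowing and expansivity conditions in the paper are formulated with closed inequalities ($\leq$), which is precisely what allows the property of being a shadow to be preserved under limits.
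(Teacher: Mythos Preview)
Your proposal is correct and follows essentially the same route as the paper: for $(1)\Rightarrow(2)$ you both define $\shm_f$ as the unique $\expc/2$-shadow and derive the pseudo-orbit axiom, shift-invariance, dynamical-invariance, and continuity from that uniqueness (with the same sequential-compactness argument for continuity); for $(2)\Rightarrow(1)$ you both cite Proposition~\ref{propShmSH} and Corollary~\ref{corshmimpexp}. One small remark: your closing appeal to Proposition~\ref{propShmSH} is unnecessary, since once shift-invariance $\shm_f\circ\shift=f\circ\shm_f$ is established the left-hand side of \eqref{ecuShm} is identically zero, so Definition~\ref{dfShm} is satisfied directly.
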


\begin{proof}
	$(1\to 2)$.
	Let $\alpha$ be an expansivity constant for $f$ and take $\delta$ such that every $\delta$ pseudo orbit
	is $\frac \alpha 2$-shadowed.
	From the expansivity we have that if $p,q$ $\frac \alpha 2$-shadow a pseudo orbit $x$ then $p=q$.
	This defines a map $\shm_f\colon \tM(f,\delta)\to M$ as $\shm_f(x)=p$ where $p$ is the unique point that $\frac \alpha 2$-shadows $x$.

	We will show that $\shm_f$ is a shadowing map for $f$ by checking the definition.
	Given $p\in M$ let $q=\shm_f(\orb_f(p))$. From expansivity we conclude that $p=q$.
	To prove that $\shm_f$ is equivariant
	take $x,y\in \tM(f,\delta)$ such that $y_i=x_{i+1}$ for all $i\in\Z$ and let
	$p=f(\shm_f(x))$ and $q=\shm_f(y)$.
	Then $\dist(f^i(f^{-1}(p)),x_i)\leq\alpha/2$ for all $i\in\Z$ and
	$\dist(f^i(q),y_i)=\dist(f^i(q),x_{i+1})\leq\alpha/2$ for all $i\in\Z$.
	This implies $\dist(f^i(p),f^i(q))\leq \alpha$ and $p=q$ from expansivity. Thus, $\shm_f$ is equivariant.

	It remains to show that $\shm_f$ is continuous.
	Suppose that $x^n\to y$ in $\tM(f,\delta)$ and
	$\shm_f(x^n)=r_n\to p$ and $\shm_f(y)=q$.
	Since $\shm_f(x^n)=r_n$ we have $\dist(f^i(r_n),x^n_i)\leq\alpha/2$ and the continuity of $f$ implies
	$\dist(f^i(p),y_i)\leq\alpha/2$ for all $i\in\Z$. Also, since
	$\shm_f(y)=q$ we have $\dist(f^i(q),y_i)\leq\alpha/2$ for all $i\in\Z$.
	Thus $\dist(f^i(p),f^i(q))\leq\alpha$, and the expansivity of $f$ implies $p=q$.
	We conclude that $\shm_f$ is continuous.
	The invariance properties were explained in \S\ref{secIntro}.

	$(2\to 1)$. The expansivity follows by Corollary \ref{corshmimpexp}. The shadowing property is a consequence of Proposition \ref{propShmSH}.
\end{proof}

\subsection{Ruelle's bracket}
\label{secRuelle's bracket}
We follow \cite{Ruelle}*{\S 7.1} concerning what D. Ruelle called \textit{Smale spaces}. He stated two sets axioms that will be recalled in detail. The first set is topological, as it does not involve any dynamics.
For $\epsilon>0$ given consider a function
$[\cdot,\cdot]\colon\Delta_\epsilon\to M$ satisfying:
\begin{enumerate}
	\item[(SS1.1)] $[\cdot,\cdot]$ is continuous and $[p,p]=p$ for all $p\in M$,
	\item[(SS1.2)] $[[p,q],r]=[p,r]$, $[p,[q,r]]=[p,r]$ when the two sides of these relations are defined.
\end{enumerate}

As explained in \cite{Ruelle}*{\S 7.1} these conditions implies a \textit{local product structure}. Indeed, if for $\delta>0$ small we define:
\begin{align*}
	V^-_p(\delta)=\{r=[p,q]\in M : \dist(p,r)\leq\delta, \dist(p,q)\leq\epsilon\},\\
	V^+_p(\delta)=\{r=[q,p]\in M : \dist(p,r)\leq\delta, \dist(p,q)\leq\epsilon\},
\end{align*}
then
\[
[\cdot,\cdot]\colon V^-_p(\delta)\times V^+_p(\delta)\to M
\]
is a homeomorphism onto a neighborhood of $p$.\footnote{Ruelle's definition of the sets $V^\pm_p(\delta)$ is slightly different as he assumes $q=[q,p]$. But assuming (SS1.2) it is easy to see that our definition is equivalent to the original one. We introduce this change in order to reduce the dependence of (SS1.2).}

Again, these axioms may not be related to any dynamics. But they can be \textit{dynamically defined} as when considering \textit{canonical coordinates}, \textit{i.e.}, $r=[p,q]$ is the point of intersection of the local stable set of $q$ and the local unstable set of $p$ if $f$ is an Anosov diffeomorphism (or any expansive homeomorphism with shadowing).
Another example of a bracket satisfying (SS1) (\textit{i.e.}, (SS1.1) and (SS1.2)) is $[p,q]=p$ for all $(p,q)\in \Delta_\epsilon$ (the projection on the first coordinate).
Also the bracket of the shift satisfies these axioms, recall \S\ref{secBracketShift}.

Then Ruelle introduces the set of axioms (SS2) with hyperbolic dynamical meaning:
\begin{enumerate}
	\item [(SS2.1)] $f$-\textit{invariance}: $f([p,q])=[f(p),f(q)]$ when both sides are defined and
	\item [(SS2.2)] \textit{hyperbolic contraction of distances}\footnote{As before, we changed this axiom. The one we give here is equivalent to the original one under the assumption of the remaining axioms.}:
	there is $\lambda>1$ such that:
	\begin{align*}
		\dist(f^n(p),f^n(q))\leq\lambda^{-n}\dist(p,q)  \text{ if }  q\in V^+_p(\delta), n\geq0,\\
		\dist(f^{-n}(p),f^{-n}(q))\leq\lambda^{-n}\dist(p,q)  \text{ if }  q\in V^-_p(\delta), n\geq 0.\\
	\end{align*}
\end{enumerate}

Ruelle define a \textit{Smale space} as a compact metric space with a bracket and a homeomorphism satisfying (SS1) and (SS2). In \cite{Ruelle}*{\S 7.3} it is explained that for a Smale space the homeomorphism is expansive and has shadowing. Thus, Ruelle's bracket gives the canonical coordinates explained before.
In the mentioned proof of expansivity it is used (SS1.2). Let us show that expansivity and (SS1.2) can be deduced from the other three axioms.

\begin{thm}
	\label{thmRuelleBExp}
	SS1.1 and SS2 implies expansivity and SS1.2.
\end{thm}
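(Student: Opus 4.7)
The plan is to first prove expansivity from SS1.1 and SS2, and then bootstrap to SS1.2 by showing that both sides of each identity lie on the same stable/unstable leaves and hence coincide.

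For expansivity, fix $\expc\in(0,\epsilon]$ small enough that the continuity of the bracket together with $[p,p]=p$ (SS1.1) force $\dist(p,[p,q])\le\delta$ and $\dist(q,[p,q])\le\delta$ whenever $\dist(p,q)\le\expc$. Suppose $\dist(f^i(p),f^i(q))\le\expc$ for all $i\in\Z$ and put $r=[p,q]$. Iterating SS2.1 gives $f^i(r)=[f^i(p),f^i(q)]$ for all $i$, and the choice of $\expc$ then places $f^N(r)\in V^-_{f^N(p)}(\delta)$ and $f^{-N}(r)\in V^+_{f^{-N}(q)}(\delta)$ for every $N\ge 0$. The backward inequality of SS2.2 yields
\[
\dist(p,r)=\dist\bigl(f^{-N}(f^N(p)),f^{-N}(f^N(r))\bigr)\le\lambda^{-N}\dist(f^N(p),f^N(r))\le\lambda^{-N}\delta,
\]
which forces $r=p$ as $N\to\infty$; the symmetric use of the forward inequality gives $r=q$, and hence $p=q$. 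Thus $\expc$ is an expansivity constant.

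For SS1.2, first shrink $\delta$ (harmless, since SS2.2 remains true on any smaller $V^\pm_p(\delta)$) so that $3\delta\le\expc$ and, by the uniform continuity of the bracket on $\Delta_\epsilon$, every bracket below lands in the appropriate $V^\pm$-set of radius $\delta$. To prove $[[p,q],r]=[p,r]$ set $s=[p,q]$, $t=[s,r]$, $u=[p,r]$. Since $t,u\in V^+_r(\delta)$, SS2.2 gives
\[
\dist(f^n(t),f^n(u))\le\lambda^{-n}\bigl(\dist(r,t)+\dist(r,u)\bigr)\le 2\delta\,\lambda^{-n},\qquad n\ge 0,
\]
while routing $t$'s past through $s$ via $t\in V^-_s(\delta)$, $s\in V^-_p(\delta)$, $u\in V^-_p(\delta)$ yields
\[
\dist(f^{-m}(t),f^{-m}(u))\le\lambda^{-m}\bigl(\dist(s,t)+\dist(p,s)+\dist(p,u)\bigr)\le 3\delta\,\lambda^{-m},\qquad m\ge 0.
\]
Both bounds stay below $3\delta\le\expc$ for every $i\in\Z$ and tend to $0$ at $\pm\infty$, so expansivity forces $t=u$. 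The identity $[p,[q,r]]=[p,r]$ follows by the dual estimate: setting $v=[q,r]$ and $w=[p,v]$, both $w,u\in V^-_p(\delta)$ give the coincidence of past asymptotics, while $w$ shares its future asymptotic with $u$ through $v\in V^+_r(\delta)$, and the same expansivity argument gives $w=u$.

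The main obstacle is not in any single estimate but in the bookkeeping of constants: one must simultaneously ensure that every bracket expression is defined (pairs in $\Delta_\epsilon$), that each image lies in the correct $V^\pm$-set of radius $\delta$, and that $3\delta\le\expc$. This is precisely where the uniform continuity of the bracket on the compact set $\Delta_\epsilon$ enters, together with the freedom to shrink $\delta$ without losing SS2.2. Once these are coordinated, the asymptotic-plus-uniform-bound pattern reduces SS1.2 to a single application of expansivity in each of the two cases.
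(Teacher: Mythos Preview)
Your proof is correct and follows essentially the same route as the paper. For expansivity you argue exactly as in Proposition~\ref{propExpDesdeCorchete}, iterating the bracket via SS2.1 and then contracting back with SS2.2; the paper simply cites that proposition. For SS1.2 the paper phrases the argument in terms of local stable/unstable sets, observing that $[[p,q],r]$ and $[p,r]$ both lie in $W^s_{2\delta}(r)\cap W^u_{2\delta}(p)$ and invoking that $4\delta$ is an expansivity constant; your explicit $\lambda^{-n}$ estimates via the triangle inequality (with the constant $3\delta$ in place of $4\delta$) are a direct unwinding of the same idea.
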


\begin{proof}
	The expansivity follows from Proposition \ref{propExpDesdeCorchete}.
For the other part we suppose that $\delta$ is small so that $4\delta$ is  an expansivity constant.
	From (SS2.2) we see that $[p,q]\in W^s_\delta(q)\cap W^u_\delta(p)$.
	Thus, $W^u_{\delta}([p,q])\subset W^u_{2\delta}(p)$ and
	\[
	[[p,q],r]\in W^s_{\delta}(r)\cap W^u_{2\delta}(p)
	\subset W^s_{2\delta}(r)\cap W^u_{2\delta}(p).
	\]
	Also $$[p,r]\in W^s_{\delta}(r)\cap W^u_{\delta}(p)\subset
	W^s_{2\delta}(r)\cap W^u_{2\delta}(p).$$
	Since $4\delta$ is an expansivity constant we conclude that $[[p,q],r]=[p,r]$.
	Analogously $[p,[q,r]]=[p,r]$ and (SS1.2) is proved.
\end{proof}

\begin{rmk}
In order to complete Ruelle's set of axioms we mention that
in \cite{Ruelle}*{\S 7.11} it is added the condition:
\begin{itemize}
	\item[(SS3)] there is $L>0$ such that
	$\dist(x,[x,y])\leq L\dist(x,y)$.
\end{itemize}
It is known that every topologically hyperbolic homeomorphism admits a metric satisfying Ruelle's axioms, including (SS3), see \cites{Fried1983,Sakai2001}.
Moreover, it is known that Dobysh's metric \cite{Dovbysh2006} and a self-similar hyperbolic metric \cite{ArSelfSimHyp} also satisfies $L\to 1$ as $\dist(x,y)\to 0$.
\end{rmk}

\section{Lee's continuous shadowing}
\label{secLeeCSh}
Following \cite{Lee03} we say that $f$ is $\mathcal{T}_0$-\textit{continuous shadowing}
if for all $\epsilon>0$ there are $\delta>0$ and a continuous map
$R_\epsilon\colon \tM(f,\delta)\to M$ such that
every $\delta$-orbit $x$ is $\epsilon$-shadowed by $R(x)$.

\begin{rmk}
 This is similar to the definition of shadowing map, however we remark that the map $R_\epsilon$ depends on $\epsilon$ and it is not requiered that $R_\epsilon(\orb_f(p))=p$ for $p\in M$.
 It is clear that if $\shm_f$ is a shadowing map then we can consider the maps $R_\epsilon$ as restrictions of $\shm_f$ to $\tM(f,\delta)$ varying $\delta$. Thus, the existence of a shadowing map implies the
 $\mathcal{T}_0$-continuous shadowing property.
 In what follow we study the
  $\mathcal{T}_0$-continuous shadowing property
  to see whether or not it implies the existence of a shadowing map.
\end{rmk}

As an orbit may not be shadowed by itself, it is natural to consider the map $G_\epsilon\colon M\to M$ defined as
$G_\epsilon(p)=R_\epsilon(\orb_f(p))$. We have some direct consequences:
\begin{enumerate}
\item $G_\epsilon$ is continuous (as it is the composition of continuous maps),
\item by definition, $G_\epsilon(p)\in \Gamma_\epsilon(p)$ for all $p\in M$ and
\item $\dist_{C^0}(G_\epsilon,Id_M)\to 0$ as $\epsilon\to 0$.
\end{enumerate}

\begin{prop}
\label{propCorrLeeCS}
 If $G_\epsilon$ is a homeomorphism for some $\epsilon$ then
 $R'_\epsilon=G_\epsilon^{-1}\circ R_\epsilon$ satisfies:
 \begin{enumerate}
 \item $R'_\epsilon$ is a pseudo-orbit map, \textit{i.e.}:
\begin{enumerate}
  \item $R'_\epsilon$ is continuous,
  \item $R'_\epsilon(\orb_f(p))=p$ for all $p\in M$,
\end{enumerate}
  \item $R'_\epsilon$ $2\epsilon$-shadows: $\dist(f^i(R'_\epsilon(x)),x_i)\leq 2\epsilon$ for all $i\in\Z$ and every $\delta$-orbit $x$.
 \end{enumerate}
\end{prop}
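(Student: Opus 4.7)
The idea is to exploit the three listed properties of $G_\epsilon$ together with the hypothesis that $G_\epsilon$ is a homeomorphism. The continuity (1a) is immediate: both $R_\epsilon$ and $G_\epsilon^{-1}$ are continuous, so their composition $R'_\epsilon = G_\epsilon^{-1}\circ R_\epsilon$ is continuous on $\tM(f,\delta)$. For (1b), given $p\in M$ I simply compute
\[
R'_\epsilon(\orb_f(p))
= G_\epsilon^{-1}(R_\epsilon(\orb_f(p)))
= G_\epsilon^{-1}(G_\epsilon(p))
= p,
\]
using the definition of $G_\epsilon$ and the fact that $G_\epsilon$ is invertible.

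The shadowing estimate (2) is where the hypothesis that $G_\epsilon(p)\in\Gamma_\epsilon(p)$ does the work. Given a $\delta$-orbit $x$, set $q = R_\epsilon(x)$ and $p = R'_\epsilon(x) = G_\epsilon^{-1}(q)$, so that $G_\epsilon(p)=q$. From the $\mathcal{T}_0$-continuous shadowing hypothesis I have
\[
\dist(f^i(q),x_i)\leq\epsilon \quad\text{for all }i\in\Z.
\]
On the other hand, because $G_\epsilon(p)\in\Gamma_\epsilon(p)$ (property (2) of the bullet list preceding the proposition), applied to $G_\epsilon(p)=q$, I get
\[
\dist(f^i(q),f^i(p))=\dist(f^i(G_\epsilon(p)),f^i(p))\leq\epsilon \quad\text{for all }i\in\Z.
\]
The triangle inequality then yields
\[
\dist(f^i(R'_\epsilon(x)),x_i)=\dist(f^i(p),x_i)\leq\dist(f^i(p),f^i(q))+\dist(f^i(q),x_i)\leq 2\epsilon
\]
for every $i\in\Z$, which is precisely statement (2).

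There is no serious obstacle here; the entire argument is a bookkeeping exercise combining the definition of $G_\epsilon$, the shadowing inequality for $R_\epsilon$, and the inclusion $G_\epsilon(p)\in\Gamma_\epsilon(p)$. The one conceptual point worth stressing is that the ``correction'' by $G_\epsilon^{-1}$ is exactly what repairs the failure of $R_\epsilon$ to send orbits to themselves, while only doubling the shadowing constant.
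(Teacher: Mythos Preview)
Your proof is correct and follows essentially the same approach as the paper: continuity by composition, the identity $R'_\epsilon(\orb_f(p))=G_\epsilon^{-1}(G_\epsilon(p))=p$, and the $2\epsilon$-shadowing via the triangle inequality using $G_\epsilon(p)\in\Gamma_\epsilon(p)$ together with the $\epsilon$-shadowing of $R_\epsilon$. The only cosmetic difference is that you name the intermediate points $p,q$ while the paper works directly with $R'_\epsilon(x)$ and $R_\epsilon(x)$.
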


\begin{proof}
As a composition of continuous functions, $R'_\epsilon$ is continuous.
Also
$$R'_\epsilon(\orb_f(p))=
G_\epsilon^{-1}(R(\orb_f(p))
=
G_\epsilon^{-1}(G_\epsilon(p))=p
$$ for all $p\in M$.
Finally,
\[
 \dist(f^i(R'_\epsilon(x)),x_i)\leq
 \dist(f^i(R'_\epsilon(x)),f^i(R_\epsilon(x)))+
 \dist(f^i(R_\epsilon(x)),x_i)
\]
and $\dist(f^i(R_\epsilon(x)),x_i)\leq\epsilon$ as $R_\epsilon$ $\epsilon$-shadows by definition.
Also, since $R_\epsilon(x)=G_\epsilon(R'_\epsilon(x))$ and
$G_\epsilon(R'_\epsilon(x))\in\Gamma_\epsilon(R'_\epsilon(x))$
we have
$\dist(f^i(R'_\epsilon(x)),f^i(R_\epsilon(x)))\leq\epsilon$ for all $i\in\Z$.
Thus $\dist(f^i(R'_\epsilon(x)),x_i)\leq\epsilon+\epsilon$ for all $i\in\Z$.
\end{proof}

\begin{rmk}
	\label{rmkRepsPresOrb}
 Proposition \ref{propCorrLeeCS} means that if the maps $R_\epsilon$ induces homeomorphisms $G_\epsilon$, for all $\epsilon>0$, then we can assume $R_\epsilon(\orb_f(p))=p$ for $p\in M$ (taking $R'_\epsilon$ instead of $R_\epsilon$).
\end{rmk}

Next we show some conditions implying that the maps $G_\epsilon$ are homeomorphisms.

\begin{rmk}
 If $\epsilon$ is an expansivity constant of $f$ then $G_\epsilon$ is the indentity of $M$ (in particular it is a homeomorphism). This is because
 $G_\epsilon(p)\in\Gamma_\epsilon(p)=\{p\}$.
 But, assuming expansivity is too strong, as it is easy to see that the maps $R_\epsilon$ would have to coincide on \textit{small} pseudo orbits. That is, $R_{\epsilon_1}(x)=R_{\epsilon_2}(x)$ when both sides are defined.
\end{rmk}

For $N\geq 1$ we say that $f$ is $N$-\textit{expansive} if there is $\expc>0$ such that $\Gamma_\expc(p)$ has at most $N$ points, for all $p\in M$. This definition was introduced by C. Morales in \cite{Morales2012}.
For the proof of the next theorem we need a lemma.

\begin{lem}
\label{lemCombNsh}
 If $T\colon A\to A$ is a surjective function such that every point is eventually $N$-periodic (\textit{i.e.} for all $a\in A$ there are $0\leq i<j\leq N$ such that $T^i(a)=T^j(a)$) then
 every point is periodic and $T$ is injective.\footnote{In fact $T^{N!}=Id_A$.}
\end{lem}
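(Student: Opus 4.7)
The plan is to first show that the ``pre-period'' drops out uniformly using surjectivity, so that every point is already periodic; then bound all periods by $N$ so that $T^{N!}=\mathrm{Id}_A$, which in particular forces injectivity.

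First, I would observe that a surjective self-map remains surjective under iteration, so $T^{N-1}(A)=A$. On the other hand, the hypothesis guarantees that for each $a\in A$ there exist $0\le i<j\le N$ with $T^i(a)=T^j(a)$; in particular the orbit of $a$ is eventually periodic with pre-period at most $N-1$, and since the forward image of a periodic point is periodic, $T^{N-1}(a)$ is always a periodic point of $T$. Combining these two facts,
\[
A=T^{N-1}(A)\subseteq \{\text{periodic points of }T\}\subseteq A,
\]
so every element of $A$ is already periodic for $T$.

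Second, I would check that the period of any $a\in A$ divides some integer in $\{1,\dots,N\}$, and is therefore at most $N$. Indeed, writing the hypothesis for a periodic $a$ as $T^i(a)=T^j(a)$ with $i<j\le N$, applying $T^{-i}$ along the periodic orbit (which makes sense because periodic orbits are finite cycles on which $T$ is a bijection) yields $T^{j-i}(a)=a$ with $j-i\le N$. Hence the period of $a$ divides $N!$, so $T^{N!}(a)=a$.

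Since this holds for every $a\in A$, we conclude $T^{N!}=\mathrm{Id}_A$; in particular $T$ has a two-sided inverse $T^{N!-1}$ and is therefore injective (indeed bijective), finishing the lemma. The only subtlety to watch is the transition from ``eventually periodic with pre-period $\le N-1$'' to ``periodic'', which is the step that genuinely uses surjectivity; the rest is a clean combinatorial consequence.
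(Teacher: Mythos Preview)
Your proof is correct and follows essentially the same strategy as the paper: use surjectivity to upgrade ``eventually periodic'' to ``periodic,'' then deduce injectivity. The paper's version pulls back a given $d$ through $T^N$ to exhibit $T^{j-i}(d)=d$ directly, whereas you phrase it globally as $A=T^{N-1}(A)\subseteq\{\text{periodic points}\}$; these are the same idea.

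The one genuine difference is in the injectivity step: the paper argues directly that $T(a)=T(b)$ forces $a=b$ by choosing a common period $k=mn$ and computing $a=T^{2k}(a)=T^{2k}(b)=b$, while you instead bound all periods by $N$ and conclude $T^{N!}=\mathrm{Id}_A$, from which injectivity is immediate. Your route has the small bonus of actually proving the footnote $T^{N!}=\mathrm{Id}_A$, which the paper states but does not argue separately.
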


\begin{proof}
First we show that any $d\in M$ is periodic with period at most $N$. As $T$ is surjective there is $c\in M$ such that $T^N(c)=d$.
Take $0\leq i<j\leq N$ such that $T^i(c)=T^j(c)$.
Applying $T^{N-i}$ we have
\[
d=T^N(c)=T^{N-i}(T^i(c))=T^{N-i}(T^j(c))=T^{j-i}(T^N(c))=
T^{j-i}(d).
\]

 To prove that $T$ is injective suppose that $T(a)=T(b)$ and take $n,m\geq 1$ such that $T^n(a)=a$ and $T^m(b)=b$.
 If $k=mn\geq 1$ then $T^k(a)=a$ and $T^k(b)=b$.
 We know that $T^{k+1}(a)=T(a)=T(b)=T^{k+1}(b)$.
 Applying $T^{k-1}$ we obtain
 $T^{k-1}(T^{k+1}(a))=T^{k-1}(T^{k+1}(b))$, thus
 $T^{2k}(a)=T^{2k}(b)$ and $a=b$. This proves that $T$ is injective.
\end{proof}

\begin{thm}
	\label{thmLeeshmN-exp}
If $f$ is $N$-expansive, has the $\mathcal{T}_0$-continuous shadowing property and $M$ is a closed manifold then
$f$ is topologically hyperbolic.
\end{thm}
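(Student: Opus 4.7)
The shadowing property for $f$ is immediate from the $\mathcal{T}_0$-continuous shadowing hypothesis (just forget continuity of $R_\epsilon$), so the real task is to upgrade $N$-expansivity to expansivity. My plan is to show that for $\epsilon$ small enough the map $G_\epsilon = R_\epsilon\circ\orb_f:M\to M$ introduced before the theorem is a homeomorphism, and then invoke Proposition \ref{propCorrLeeCS} together with the argument of Theorem \ref{thmShmCwExpImpExp}.

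Let $\expc>0$ be an $N$-expansivity constant, so $|\Gamma_\expc(p)|\leq N$ for all $p$. Since $G_\epsilon(p)\in\Gamma_\epsilon(p)$, the triangle inequality along $f$-iterates gives by induction that $G_\epsilon^k(p)\in\Gamma_{k\epsilon}(p)$. Choosing $\epsilon<\expc/N$ we obtain $\{G_\epsilon^0(p),G_\epsilon^1(p),\dots,G_\epsilon^N(p)\}\subset\Gamma_\expc(p)$, a set of at most $N$ points, so by pigeonhole every $p\in M$ is eventually $N$-periodic under $G_\epsilon$. Because $M$ is a closed connected manifold and $\distcc(G_\epsilon,\id)\leq\epsilon$, the map $G_\epsilon$ is homotopic to $\id$ via a straight-line homotopy in a fixed finite atlas (valid for $\epsilon$ smaller than the atlas's injectivity scale), hence has $\mathbb Z/2$-degree one and is surjective: otherwise it would factor through $M\setminus\{q_0\}$, which has vanishing top-dimensional $\mathbb Z/2$-homology, contradicting homotopy with $\id$. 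Lemma \ref{lemCombNsh} now yields that $G_\epsilon$ is injective (so a homeomorphism, with $G_\epsilon^{N!}=\id$), and by Proposition \ref{propCorrLeeCS} the map $R'_\epsilon=G_\epsilon^{-1}\circ R_\epsilon$ is a continuous pseudo-orbit map that $2\epsilon$-shadows every element of $\tM(f,\delta_\epsilon)$.

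Finally, $N$-expansivity implies cw-expansivity (any connected subset $C\subset M$ with $\diam(f^i(C))\leq\expc$ lies in some $\Gamma_\expc(p)$, hence is finite, hence a singleton in the Hausdorff space $M$), and closed manifolds are locally connected. Inspecting the proof of Theorem \ref{thmShmCwExpImpExp}, one sees that it uses of $\shm_f$ only that it is a continuous pseudo-orbit map that shadows within a prescribed fraction of the cw-expansivity constant. Choosing $\epsilon<\expc/8$ and substituting $R'_\epsilon$ for $\shm_f$ in that argument, we conclude that $f$ is expansive; combined with shadowing this gives topological hyperbolicity.

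The main obstacle I foresee is the surjectivity step: one must be careful to set up the straight-line homotopy and the $\mathbb Z/2$-degree argument in the topological category, without any smooth structure on $M$, and to verify that the smallness of $\distcc(G_\epsilon,\id)$ is calibrated against a chart atlas independent of $\epsilon$. Once $G_\epsilon$ is known to be a continuous surjection, Lemma \ref{lemCombNsh} and Proposition \ref{propCorrLeeCS} do the rest mechanically, and the cw-expansivity argument of Theorem \ref{thmShmCwExpImpExp} applies verbatim to $R'_\epsilon$.
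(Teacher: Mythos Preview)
Your proposal is correct and follows essentially the same route as the paper: show $G_\epsilon$ is surjective (the paper simply cites \cite{Walters1978} for this, while you sketch a $\Z/2$-degree argument), use the pigeonhole bound $G_\epsilon^k(p)\in\Gamma_{k\epsilon}(p)$ together with Lemma \ref{lemCombNsh} to get injectivity, then invoke Proposition \ref{propCorrLeeCS} and rerun the cw-expansivity argument of Theorem \ref{thmShmCwExpImpExp} with $R'_\epsilon$ in place of $\shm_f$. The only cosmetic difference is your threshold $\epsilon<\expc/N$ versus the paper's $(N+1)\epsilon<\expc$, and your added remark that closed manifolds are locally connected (needed for Theorem \ref{thmShmCwExpImpExp}) is a point the paper leaves implicit.
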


\begin{proof}
 In general we know that $G_\epsilon$ is $C^0$-close to the identity, and as $M$ is a closed manifold, if $\epsilon$ is small we have that $G_\epsilon$ is surjective (see \cite{Walters1978}).

 We know that $G_\epsilon(p)\in \Gamma_\epsilon(p)$ for all $p\in M$.
 Thus, $G_\epsilon^n(p)\in \Gamma_{n\epsilon}(p)$ for all $n\geq 0$.
Suppose that $(N+1)\epsilon<\expc$.
Then the set $\{p,G_\epsilon(p),G^2_\epsilon(p),\dots,
G_\epsilon^{N}(p)\}\subset\Gamma_{\expc}(p)$ has at most $N$ points, for any $p\in M$.
Thus, for some $0\leq i<j\leq N$ we have $G_\epsilon^i(p)=
G_\epsilon^j(p)$.
This means that every point is eventually $N$-periodic for $G_\epsilon$. Applying Lemma \ref{lemCombNsh} for $T=G_\epsilon$ we conclude that $G_\epsilon$ is injective.
As it is continuous and bijective on a compact metric space its inverse is continuous and it is a homeomorphism.

This proves that the maps $G_\epsilon$ are homeomorphisms for all $\epsilon$ sufficiently small. By Remark \ref{rmkRepsPresOrb}
we can assume that the maps $R_\epsilon$ satisfies
$R_\epsilon(\orb_f(p))=p$ for all $p\in M$.

Notice that $N$-expansivity implies cw-expansivity.
If $\expc$ is a cw-expansivity constant we can suppose that $\epsilon<\expc/2$ and we have that every $\delta$-orbit $x$ is $\expc/4$-shadowed by
$R_\epsilon(x)$. This allows us to apply the techniques of the proof of Theorem \ref{thmShmCwExpImpExp} to conclude that $f$ is expansive.
\end{proof}

\begin{thm}
 If $f$ has the $\mathcal{T}_0$-continuous shadowing property and $M$ is totally disconnected then it admits a shadowing map.
\end{thm}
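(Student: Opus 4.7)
My plan is to reduce the statement to Proposition \ref{propCorrLeeCS} by showing that the auxiliary maps $G_\epsilon=R_\epsilon\circ\orb_f$ supplied by the $\mathcal{T}_0$-continuous shadowing property can be taken to be homeomorphisms, using total disconnectedness in an essential way.

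First I would use that, since $M$ is compact and totally disconnected, there is a refining sequence of finite clopen partitions $\mathcal{P}_n$ of $M$ with $\mathrm{mesh}(\mathcal{P}_n)\to 0$; let $\zeta_n>0$ be the minimum distance between two distinct pieces of $\mathcal{P}_n$. For each $n$ I would apply $\mathcal{T}_0$-continuous shadowing with parameter $\epsilon_n<\zeta_n/3$, chosen to be decreasing, to obtain $\delta_n>0$ (also decreasing, with $\delta_n\le\epsilon_n$) and a continuous map $R_n\colon \tM(f,\delta_n)\to M$ that $\epsilon_n$-shadows.

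Second, from the separation condition two useful facts follow at once: for any $x\in\tM(f,\delta_n)$ one has $\dist(R_n(x),x_0)\le\epsilon_n<\zeta_n$, so $R_n(x)$ lies in the same piece of $\mathcal{P}_n$ as $x_0$, and by applying the same reasoning at every time coordinate, $f^i(R_n(x))$ lies in the same piece of $\mathcal{P}_n$ as $x_i$ for all $i\in\Z$. In particular the map $G_n=R_n\circ\orb_f$ preserves each piece of $\mathcal{P}_n$ setwise, and for $m\ge n$ the values $R_n(x)$ and $R_m(x)$ lie in the same piece of $\mathcal{P}_n$ whenever both are defined.

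Third, the central step is to show that (after replacing $R_n$ by a suitable modification if necessary) $G_n$ is a homeomorphism for $n$ large enough. The idea is that $G_n$ descends to the identity on the finite quotient $M\to\mathcal{P}_n$, and by combining this with the continuity of $G_n$, its uniform proximity to $\id_M$, and an inductive refinement using the nested partitions $\mathcal{P}_m$ for $m\ge n$, one can either argue bijectivity directly on each piece (by a compactness and combinatorial argument reminiscent of Lemma \ref{lemCombNsh}) or build a continuous section piece-by-piece using the clopen structure. Once $G_n$ is a homeomorphism, Proposition \ref{propCorrLeeCS} furnishes a pseudo-orbit map $R'_n=G_n^{-1}\circ R_n$ that $2\epsilon_n$-shadows. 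The shadowing map $\shm_f$ is then obtained by fixing some index $N$ and setting $\shm_f=R'_N$ on $\tM(f,\delta_N)$: it is continuous and preserves orbits by construction, and on the smaller sets $\tM(f,\delta_m)\subset\tM(f,\delta_N)$ with $m\ge N$ the agreement modulo $\mathcal{P}_m$ of $R'_N$ and $R'_m$, together with $\mathrm{mesh}(\mathcal{P}_m)\to 0$, forces the shadowing precision to improve with $\delta$, so that Proposition \ref{propShmSH} is applicable.

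The main obstacle is the third step: deducing that $G_n$ is a homeomorphism from total disconnectedness alone, since on a general zero-dimensional compact space a continuous self-map close to the identity need not be bijective (e.g.\ a continuous retraction of a Cantor set onto a finite subset). The argument must therefore use the $\mathcal{T}_0$-continuous shadowing hypothesis more delicately, exploiting that the clopen partition is simultaneously respected by $R_n$ at every time coordinate, so that the combinatorics at the level of the finite factor $\mathcal{P}_n$ and the freedom of choice of $R_n$ can be leveraged to enforce bijectivity of $G_n$ on each piece.
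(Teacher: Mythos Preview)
Your proposal has a genuine gap at the step you yourself flag as ``the main obstacle'': showing that $G_n=R_n\circ\orb_f$ is a homeomorphism. Nothing in the $\mathcal{T}_0$-continuous shadowing hypothesis prevents $G_n$ from collapsing nearby points, and the suggestions you offer (a ``combinatorial argument reminiscent of Lemma~\ref{lemCombNsh}'', or building a section piece by piece) are not substantiated; Lemma~\ref{lemCombNsh} requires an \emph{a priori} eventual-periodicity structure that there is no reason for $G_n$ to have on a general totally disconnected $M$. There is also a secondary slip in your final step: you claim ``agreement modulo $\mathcal{P}_m$ of $R'_N$ and $R'_m$'', but your second paragraph only establishes agreement modulo the \emph{coarser} partition $\mathcal{P}_N$ (since $R_N$ merely $\epsilon_N$-shadows), and that is not enough to force the shadowing precision of the fixed map $R'_N$ to improve as $\delta\to 0$.

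The paper's argument takes a different and much simpler route, avoiding Proposition~\ref{propCorrLeeCS} entirely. The key observation is that total disconnectedness of $M$ makes $\tM=M^{\Z}$, and hence the closed subset $\tM(f,\delta)$, zero-dimensional; therefore the closed set $\orb_f(M)$ admits a decreasing sequence of clopen neighborhoods $U_n\subset\tM(f,\delta)$ with $U_n\subset\tM(f,\delta_n)$. One then \emph{patches} the given maps $R_n$ on the clopen annuli $U_{n-1}\setminus U_n$ and sets $\shm_f(x)=x_0$ on $\orb_f(M)$ itself. Continuity off $\orb_f(M)$ is automatic from the clopen decomposition; continuity at points of $\orb_f(M)$ follows from $\distcc(G_\epsilon,\id_M)\to 0$. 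In short: rather than trying to invert $G_\epsilon$ in the codomain, exploit zero-dimensionality on the \emph{domain} $\tM(f,\delta)$ and glue the $R_n$ directly.
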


\begin{proof}
	Since $M$ is totally disconnected its copy in $\tM$, namely $\orb_f(M)\subset\tM$, is totally disconnected too.
	Take $\epsilon_n=1/n$ and for each $n\geq 1$ fix a map $R_{\epsilon_n}$
	defined on $\tM(f,\delta_n)$, with $\delta_n\to 0$.

	Define $U_0=\tM(f,\delta)$ and for $n\geq 1$
	take $U_n\subset U_{n-1}\cap\tM(f,\delta/(n+1))$ such that
	$U_n$ is open and closed and contains the copy of $M$.
	We will define $\shm_f\colon\tM(f,\delta)$ as follows.
	For $x\in \orb_f(M)$ we must define $\shm_f(x)=x_0$ (in order to satisfy $\shm_f(\orb_f(p))=p$ for all $p\in M$).
	For $x\in \tM(f,\delta)\setminus\orb_f(M)$
	there is a maximum $n\geq 0$ such that $x\in U_n$, and we define
	$\shm_f(x)=R_n(x)$.

	From the construction it is clear that $\shm_f$ induces shadowing and that it is continuous on $\tM(f,\delta)\setminus\orb_f(M)$.
  The continuity at $x\in\orb_f(M)$ follows from the fact that
	$\dist_{C^0}(G_\epsilon,Id_M)\to 0$ as $\epsilon\to 0$.
\end{proof}

\begin{bibdiv}
\begin{biblist}

\bib{AAV}{article}{
author={M. Achigar},
author={A. Artigue},
author={J.L. Vieitez},
title={New cw-expansive homeomorphisms of surfaces},
journal={Annales de la faculté des sciences de Toulouse Mathématiques},
volume={XXIX},
number={2},
year={2020},
pages={221-246}}

\bib{Ak93}{book}{
author={E. Akin},
title={The general topology of dynamical systems},
year={1993},
publisher={American Mathematical Society}}

\bib{AkHuKe}{book}{
author={E. Akin},
author={M. Hurley},
author={J.A. Kennedy},
title={Dynamics of topologically generic homeomorphisms},
publisher={American Mathematical Society},
year={2003}}


\bib{AH}{book}{
author={N. Aoki},
author={K. Hiraide},
title={Topological theory of dynamical systems},
publisher={North-Holland},
year={1994}}

\bib{ArDend}{article}{
author={A. Artigue},
title={Dendritations of surfaces},
year={2017},
doi={10.1017/etds.2017.14},
journal={Ergod. Th. \& Dynam. Sys.}}

\bib{ArSelfSimHyp}{article}{
	author={A. Artigue},
	title={Self-similar hyperbolicity},
	year={2017},
	journal={Ergod. Th. \& Dynam. Sys.},
	doi={10.1017/etds.2016.139}}

\bib{ACCV20}{article}{
title={Beyond topological hyperbolicity: The L-shadowing property},
author={A. Artigue},
author={B. Carvalho},
author={W. Cordeiro},
author={J. Vieitez},
journal={J. Differential Equations},
volume={268},
year={2020},
pages={3057-3080}}

\bib{Aoki83}{article}{
author={N. Aoki},
title={On Homeomorphisms with Pseudo-Orbit Tracing Property},
journal={Tokyo J. Math.},
volume={6},
year={1983}}


%

\bib{BGO}{article}{
title={Shadowing and expansivity in subspaces},
author = {A.D. Barwell},
author={C. Good},
author={P. Oprocha},
journal={Fundamenta Mathematicae},
year={2012},
volume={219},
doi={10.4064/fm219-3-2}}

\bib{BGOR}{article}{
title = {Characterizations of $\omega$-limit sets in topologically hyperbolic systems},
author = {A.D. Barwell},
author={C. Good},
author={P. Oprocha},
author={B.E. Raines},
year = {2013},
doi = {10.3934/dcds.2013.33.1819},
volume = {33},
pages = {1819--1833},
journal = {Discrete and Continuous Dynamical Systems}}


\bib{Bowen75}{article}{
author={R. Bowen},
title={$\omega$-Limit Sets for Axiom A Diffeomorphisms},
journal={J. Diff. Eq.},
volume={18},
year={1975},
pages={333-339}}


\bib{CaCo19}{article}{
title={Positively N-expansive Homeomorphisms and the L-shadowing Property},
author={B. Carvalho},
author={W. Cordeiro},
journal={J. of Dynamics and Differential Equations},
volume={31},
year={2019},
pages={1005-1016}}



\bib{Dovbysh2006}{article}{
author={S.A. Dovbysh},
title={Optimal Lyapunov metrics of expansive homeomorphisms},
journal={Izv. Math.},
volume={70},
year={2006},
pages={883–929}}



\bib{Fried1983}{article}{
author={D. Fried},
title={Métriques naturelles sur les espaces de Smale},
journal={C. R. Acad. Sci. Paris Sér. I Math.},
volume={297},
year={1983},
pages={77–79}}

\bib{GMMT}{article}{
title = {Expansivity and unique shadowing},
author = {C. Good},
author={S. Macias},
author={J. Meddaugh},
author={J. Mitchell},
author={J. Thomas},
year = {2020},
doi = {10.1090/proc/15204},
volume = {149},
pages = {671--685},
journal = {Proceedings of the American Mathematical Society}}

\bib{Hi90}{article}{
	author={K. Hiraide},
	title={Expansive homeomorphisms of compact surfaces are pseudo-Anosov},
	journal={Osaka J. Math.},
	volume={27},
	year={1990},
	pages={117--162}}
%
%
\bib{Kato93}{article}{
	author={H. Kato},
	title={Continuum-wise expansive homeomorphisms},
	journal={Canad. J. Math.},
	volume={45},
	number={3},
	year={1993},
	pages={576--598}}
%
%
%
%
%

\bib{KOP}{article}{
title={Continuous and inverse shadowing},
author={P.E. Kloeden},
author={J. Ombach},
author={A.V. Pokrovskii},
journal={Funct. Differ. Equ.},
volume={6},
year={1999},
pages={137--153}}

\bib{Kurka}{book}{
author={P. Kurka},
title={Topological and Symbolic Dynamics},
publisher={Soci{\'e}t{\'e} math{\'e}matique de France},
series={Collection SMF},
year={2003}}

\bib{Lee03}{article}{
author={K. Lee},
title={Continuous inverse shadowing and hyperbolicity},
journal={Bull. Austral. Math. Soc.},
volume={67},
year={2003},
pages={15--26}}

\bib{Lew89}{article}{
	author={J. Lewowicz},
	title={Expansive homeomorphisms of surfaces},
	journal={Bol. Soc. Bras. Mat.},
	volume={20},
	pages={113--133},
	year={1989}}


\bib{Morales2012}{article}{
	author={C.A. Morales},
	title={A generalization of expansivity},
	journal={Discrete Contin. Dyn. Syst.},
	volume={32},
	year={2012},
	number={1},
	pages={293--301}}



\bib{Ni71}{article}{
author={Z. Nitecki},
title={On Semi-Stability for Diffeomorphisms},
journal={Inventiones math.},
volume={14},
pages={83-122},
year={1971}}



\bib{Pil94}{article}{
author={S.Y. Pilyugin},
title={The Space of Dynamical Systems with the $C^0$ Topology},
series={Lecture Notes in Math.},
volume={1571},
publisher={Springer, Berlin, Heidelberg},
year={1994}}

\bib{Ruelle}{book}{
author={D. Ruelle},
title={Thermodynamic Formalism},
publisher={Cambridge University Press},
year={2004},
edition={2}}

\bib{Sakai2001}{article}{
author={K. Sakai},
title={Shadowing properties of L-hyperbolic homeomorphisms},
journal={Topology Appl.},
volume={112},
year={2001},
pages={229–243}}


\bib{Walters1970}{article}{
	author={P. Walters},
	title={Anosov diffeomorphisms are topologically stable},
	journal={Topology},
	volume={9},
	pages={71-78},
	year={1970}}

\bib{Walters1978}{article}{
author={P. Walters},
title={On the pseudo orbit tracing property and its relationship to stability},
journal={Lect. Notes Math.},
volume={668},
pages={231-244},
year={1978}}



\end{biblist}
\end{bibdiv}


\end{document}